\newtheorem{theorem}{Theorem}
\theoremstyle{plain}
\newtheorem{corollary}{Corollary}
\newtheorem{definition}{Definition}
\newtheorem{lemma}{Lemma}
\newtheorem{proposition}{Proposition}
\newtheorem{remark}{Remark}
\numberwithin{equation}{section}
 \numberwithin{theorem}{section}
 \numberwithin{proposition}{section}
 \numberwithin{remark}{section}
 \numberwithin{definition}{section}
 \numberwithin{lemma}{section}
 \numberwithin{corollary}{section}
 \numberwithin{example}{section}
 \numberwithin{claim}{section}
\begin{document}
\title[Reaction-diffusion equations in fractured porous media]{%
Homogenization of reaction-diffusion equations in fractured porous media}
\author{Hermann Douanla}
\address{Hermann Douanla, Department of Mathematics, University of Yaounde
1, P.O. Box 812, Yaounde, Cameroon}
\email{hdouanla@gmail.com}
\author{Jean Louis Woukeng}
\address{Jean Louis Woukeng, Department of Mathematics and Computer Science,
University of Dschang, P.O. Box 67, Dschang, Cameroon}
\email{jwoukeng@yahoo.fr}
\date{May, 2015}
\subjclass[2000]{35B27, 76M50}
\keywords{Fractured porous medium, homogenization, multi-scale convergence,
reaction-diffusion equation with large reaction term}

\begin{abstract}
The paper deals with the homogenization of reaction-diffusion equations with
large reaction terms in a multi-scale porous medium. We assume that the
fractures and pores are equidistributed and that the coefficients of the
equations are periodic. Using the multi-scale convergence method, we derive a homogenization result whose limit problem is defined on a fixed domain and is of convection-diffusion-reaction type.
\end{abstract}

\maketitle

\section{Introduction\label{S1}}

Our aim is to investigate, by mean of mathematical
homogenization techniques, the diffusion phenomenon in a multi-scale porous
medium. The medium consists of a connected network made of pores and
fractures which are equidistributed, and the diffusion process is modelled
by a semilinear reaction-diffusion equation with a large reaction term.

To be more precise, we consider a diffusion process modelled by the
following boundary value problem: 
\begin{equation}
\left\{ \begin{aligned} \rho \left( \frac{x}{\varepsilon }\right)
\frac{\partial u_{\varepsilon }}{\partial t}= \text{div}\left( A\left(
\frac{x}{\varepsilon },\frac{t}{\varepsilon ^{2}}\right) \nabla
u_{\varepsilon }\right) +\frac{1}{\varepsilon }g\left( \frac{x}{\varepsilon
},\frac{t}{\varepsilon ^{2}},u_{\varepsilon }\right) &\quad \text{ in }\ \
Q_{T}^{\varepsilon }=\Omega ^{\varepsilon }\times (0,T), \\ A\left(
\frac{x}{\varepsilon },\frac{t}{\varepsilon ^{2}}\right) \nabla
u_{\varepsilon }\cdot \nu =0 &\quad \text{ on }\ \ (\partial
\Omega^{\varepsilon}\setminus \partial \Omega)\times \left( 0,T\right), \\
u_{\varepsilon }=0 & \quad\text{ on }\ \ (\partial \Omega^{\varepsilon}\cap
\partial \Omega) \times \left( 0,T\right), \\ u_{\varepsilon }(x,0)=u^{0}(x)
& \quad \text{ in }\ \ \Omega ^{\varepsilon }, \end{aligned}\right.
\label{eq1}
\end{equation}%
where $T>0$ is a fixed real number representing the final time of the
process, $\Omega ^{\varepsilon }$ is a fractured porous domain in which the
process occurred and whose structure follows in the lines below (see \cite%
{Wou15}).

Let $\Omega $ be a bounded domain in $\mathbb{R}^{N}$ ($N\geq 3$) locally
located on one side of its Lipschitz continuous boundary $\partial \Omega $.
Let $Y=(0,1)^{N}$ be the unit cell in $\mathbb{R}^{N}$ and put $Y=\overline{Y%
}_{c}\cup Y_{m}$ where $Y_{m}$ and $Y_{c}$ are two disjoint open connected
sets representing the local structure of the porous matrix and the cracks
(fissures), respectively. We assume that a periodic repetition of $Y_{m}$ in 
$\mathbb{R}^{N}$ is connected and has a Lipschitz continuous boundary. Next,
we set $Y_{m}=Z_{s}\cup \overline{Z}_{p}$ where $Z_{p}$ and $Z_{s}$ are two
disjoint open connected sets representing the local structure of the solid
part of the porous matrix and the pores, respectively. We assume that $Z_{p}$
and $Z_{s}$ have strictly positive Lebesgue measures and that $Z_{s}$ has a
Lipschitz continuous boundary. The fractured porous medium $\Omega
^{\varepsilon }$ is defined as follows. For $\varepsilon >0$, we set 
\begin{equation*}
G_{m}=\cup _{k\in \mathbb{Z}^{N}}(k+Y_{m})\ \ \text{ and }\ \ G_{c}=\mathbb{R%
}^{N}\setminus \overline{G}_{m},
\end{equation*}%
and 
\begin{equation*}
G_{s}=\cup _{k\in \mathbb{Z}^{N}}(k+Z_{s})\ \ \text{and}\ \
G_{p}=G_{m}\setminus \overline{G}_{s},
\end{equation*}%
and we define the pores space $\Omega _{p}^{\varepsilon }=\Omega \cap
\varepsilon ^{2}G_{p}$ (this include the pores crossing $\partial \Omega $
), the cracks space $\Omega _{c}^{\varepsilon }=\Omega \cap \varepsilon
G_{c} $ (this includes the cracks crossing $\partial \Omega $ ) and the
fractured porous medium as: 
\begin{equation*}
\Omega ^{\varepsilon }=\Omega \setminus (\Omega _{p}^{\varepsilon }\cup
\Omega _{c}^{\varepsilon }).
\end{equation*}%
We assume that both $\Omega ^{\varepsilon }$ and $\Omega _{p}^{\varepsilon
}\cup \Omega _{c}^{\varepsilon }$ are connected.

This being so, the $\varepsilon $-problem (\ref{eq1}) is constrained as
follows:

\begin{itemize}
\item[\textbf{A1}] \textbf{Uniform ellipticity}. The matrix $A(y,\tau
)=(a_{ij}(y,\tau ))_{1\leq i,j\leq N}\in (L^{\infty }(\mathbb{R}%
^{N+1}))^{N\times N}$ is real, symmetric, positive definite, i.e, there
exists $\Lambda >0$ such that 
\begin{equation*}
\begin{array}{l}
\left\Vert a_{ij}\right\Vert _{L^{\infty }(\mathbb{R}^{N+1})}\leq \Lambda
,\,\,1\leq i,j\leq N, \\ 
\sum_{ij=1}^{N}a_{ij}(y,\tau )\zeta _{i}\zeta _{j}\geq \Lambda
^{-1}\left\vert \zeta \right\vert ^{2}\text{ for all }(y,\tau )\in \mathbb{R}%
^{N+1},\zeta \in \mathbb{R}^{N}.%
\end{array}%
\end{equation*}

\item[\textbf{A2}] \textbf{Lipschitz continuity}. The function $g:\mathbb{R}%
^{N}\times \mathbb{R}\times \mathbb{R}\rightarrow \mathbb{R}$ satisfies the
following hypotheses. There exists $C>0$ such that for any $(y,\tau )\in 
\mathbb{R}^{N+1}$ and $u\in \mathbb{R}$, 
\begin{equation*}
\begin{array}{l}
\left\vert \partial _{u}g(y,\tau ,u)\right\vert \leq C \\ 
\left\vert \partial _{u}g(y,\tau ,u_{1})-\partial _{u}g(y,\tau
,u_{2})\right\vert \leq C\left\vert u_{1}-u_{2}\right\vert (1+\left\vert
u_{1}\right\vert +\left\vert u_{2}\right\vert )^{-1}.%
\end{array}%
\end{equation*}

\item[\textbf{A3}] $g(y,\tau ,0)=0$ for any $(y,\tau )\in \mathbb{R}^{N+1}$.

\item[\textbf{A4}] \textbf{Periodicity}. We assume that:

\begin{itemize}
\item[(i)] $g(\cdot ,\cdot ,u)\in \mathcal{C}_{\text{per}}(Y\times \mathcal{T%
})$ ($\mathcal{T}=(0,1)$) for any $u\in \mathbb{R}$ with $\int_{Y}g(y,\tau
,u)\,dy=0$ for all $(\tau ,u)\in \mathcal{T}\times \mathbb{R}$;

\item[(ii)] the functions $a_{ij}$ lie in $L_{\text{per}}^{2}(Y\times 
\mathcal{T})$ for all $1\leq i,j\leq N$;

\item[(iii)] the density function $\rho $ belongs to $C_{per}(Y)$ and
satisfies $\Lambda ^{-1}\leq \rho (y)\leq \Lambda $ for almost all $y\in 
\mathbb{R}^{N}$.
\end{itemize}
\end{itemize}

\begin{remark}
\label{r1}\emph{As a direct consequence of the periodicity and the zero mean
value hypothesis for the function }$g$\emph{\ (see precisely the first item
of the hypothesis A4 above), there exists a unique }$R(\cdot ,\cdot ,u)\in 
\mathcal{C}_{\text{per}}(Y\times T)$\emph{\ such that }$\Delta _{y}R(\cdot
,\cdot ,u)=g(\cdot ,\cdot ,u)$\emph{\ and }$\int_{Y}R(y,\tau ,u)\,dy=0$\emph{%
\ for all }$\tau $\emph{, }$u\in \mathbb{R}$\emph{. Moreover }$R(\cdot
,\cdot ,u)$\emph{\ is at least twice differentiable with respect to }$y$%
\emph{. Furthermore, on letting }$G=\nabla _{y}R$\emph{\ it follows from A2
and A3 that }%
\begin{equation}
\left\vert G(y,\tau ,u)\right\vert \leq C\left\vert u\right\vert \text{, }%
\left\vert \partial _{u}G(y,\tau ,u)\right\vert \leq C\text{,}  \label{eq8}
\end{equation}%
\emph{and} 
\begin{equation}
\left\vert \partial _{u}G(y,\tau ,u_{1})-\partial _{u}G(y,\tau
,u_{2})\right\vert \leq C\left\vert u_{1}-u_{2}\right\vert (1+\left\vert
u_{1}\right\vert +\left\vert u_{2}\right\vert )^{-1}.  \label{eq9}
\end{equation}
\end{remark}

\bigskip

The motivation for the problem (\ref{eq1}) arises from its applicability in
the area of modeling of flow and transport in fractured porous media related
to environmental and energy problems. In order to overcome difficulties
encountered in numerical simulations in multi-scale porous media, we need to
upscale such models, that is to find equivalent models by letting $%
\varepsilon \rightarrow 0$. This leads to model problems posed on a fixed
domain $Q=\Omega \times (0,T)$ with suitable boundary conditions, hence relatively easy to handle numerically.

Problem (\ref{eq1}) can also be viewed as modeling the flow of a single
phase compressible fluid in a fractured porous medium that obey nonlinear
Darcy law. In that case, $u_{\varepsilon }$ is the density of the fluid, $%
\rho (y)$ is the porosity of the medium while $A(y,\tau )$ is the
permeability of the medium. As the scale (size) of the fractures and that of the pores are separated (ratio of order $\varepsilon) 
$, we shall apply the multi-scale (or reiterated two-scale) convergence
techniques and the framework introduced in \cite{woukengaa} for the
upscaling of the same problem in a fixed domain.

The homogenization of parabolic equations has been widely investigated in the
literature. We quote some works similar to ours. In \cite{PR11} the
homogenization of parabolic monotone operator in periodically perforated
domain is considered. The problem they consider is degenerate and they use
the two-scale convergence method to achieve their goal. In \cite{NR02}, the
authors study the homogenization of a family of parabolic equations 
\begin{equation*}
\frac{\partial }{\partial t}b\left( \frac{x}{\varepsilon },u_{\varepsilon
}\right) -\text{div~}a(u_{\varepsilon },\nabla u_{\varepsilon })=f
\end{equation*}%
posed on a single periodically perforated domain $\Omega _{\varepsilon }$
with Dirichlet boundary conditions. In \cite{AGP07} is considered the
upscaling of a convection-diffusion equation in a perforated domain made of
holes periodically distributed. The homogenization limit for the diffusion
equation with nonlinear flux condition on the boundary of a periodically
perforated domain is studied in \cite{JNS10}. In \cite{DN04} the
homogenization of a semilinear parabolic equation in a periodically
perforated domain is considered. In \cite{showalter} the authors describe some diffusion models for fractured media. We also mention \cite{jaroudi} where the author used the $\Gamma$-convergence method associated with multi-scale convergence notions to get a limit law of an incompressible viscous flow in a porous medium with double porosity.

Taking into account the preceding review which is very far from being
exhaustive, we observe that the study of a problem like (\ref{eq1}) is relevant due to the geometry of the domain and we make use of the multi-scale convergence concept as the
homogenization method. This leads to the following main result of the paper, where in the passage to the limit (as $\varepsilon \to 0$), the large reaction term in the $\varepsilon$-problem generates a convection term and we get a limit problem of convection-diffusion-reaction type.

\begin{theorem}
Assume that the hypotheses \textbf{A1-A4} are in place and let $%
u_{\varepsilon }$ ($\varepsilon >0$) be the unique solution to \emph{(\ref%
{eq1})}. Then as $\varepsilon \rightarrow 0$ we have 
\begin{equation*}
u_{\varepsilon }\rightarrow u_{0}\quad \text{in }\ \ L^{2}(\Omega _{T}),
\end{equation*}%
where $u_{0}\in L^{2}(0,T;H_{0}^{1}(\Omega ))$ is the unique solution to 
\begin{equation*}
\left\{ \begin{aligned}
&|Z_s|\left(\int_{Y_m}\rho(y)\,dy\right)\frac{\partial u_0}{\partial t} =
\text{div}\, \left(\hat{A}(x,t)\nabla u_0\right) + \text{div}\, L_1(x,t,u_0)
- L_2(x,t,u_0)\cdot \nabla u_0 - L_3(x,t,u_0) \text{ in } \Omega_T\\ & u_0
=0 \qquad \text{on }\ \ \partial\Omega\times (0,T)\\ &u_0(x,0)= u^0(x)
\qquad \text{in }\ \ \Omega. \end{aligned}\right.
\end{equation*}
\end{theorem}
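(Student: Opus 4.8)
The plan is to follow the standard three-stage strategy for homogenization via multi-scale (reiterated two-scale) convergence: first establish uniform a priori bounds on the family $u_\varepsilon$, then extract multi-scale limits and identify the structure of the limit field, and finally pass to the limit in the variational formulation to derive the effective problem. The presence of the large reaction term $\varepsilon^{-1}g$ is the crucial feature, and handling it is where the corrector $R$ (equivalently $G=\nabla_y R$) from Remark \ref{r1} will do the essential work.

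First I would derive the energy estimates. Multiplying \eqref{eq1} by $u_\varepsilon$ and integrating over $\Omega^\varepsilon$, the ellipticity hypothesis \textbf{A1} controls the diffusion, while the delicate point is the $\varepsilon^{-1}g$ contribution. Here the zero-mean and periodicity hypothesis \textbf{A4}(i) is decisive: because $\int_Y g(y,\tau,u)\,dy=0$, one rewrites $g=\Delta_y R$ and integrates by parts in the weak formulation so that the singular $\varepsilon^{-1}$ factor is absorbed, producing terms of order $O(1)$ involving $G(y,\tau,u_\varepsilon)$. Using the Lipschitz bounds \eqref{eq8}--\eqref{eq9} together with \textbf{A3}, one obtains that $u_\varepsilon$ is bounded in $L^\infty(0,T;L^2(\Omega^\varepsilon))\cap L^2(0,T;H^1(\Omega^\varepsilon))$ uniformly in $\varepsilon$, and that $\partial_t u_\varepsilon$ is suitably bounded in a dual space. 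I would carry these bounds across the perforated geometry using uniform extension operators for the domains $\Omega^\varepsilon$ (available since the periodic repetition of $Y_m$ is connected with Lipschitz boundary), so that the extended sequence lives on the fixed domain $\Omega$.

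Next, invoking the reiterated two-scale compactness adapted to the two separated scales $\varepsilon$ and $\varepsilon^2$, I would extract a subsequence and test functions such that $u_\varepsilon$ multi-scale converges to a limit $u_0(x,t)$ together with first- and second-order correctors $u_1, u_2$ in the appropriate periodic Sobolev spaces over $Y_m$, $Z_s$ and the time cell $\mathcal{T}$. The strong $L^2(\Omega_T)$ convergence then follows from compactness (Aubin--Lions type) combined with the structure of the limit. The factor $|Z_s|\bigl(\int_{Y_m}\rho\,dy\bigr)$ multiplying $\partial_t u_0$ is exactly the product of the volume fractions attached to the two nested scales, confirming that the correct nested cell problems over $Y_m$ and $Z_s$ are being solved; the homogenized matrix $\hat{A}(x,t)$ arises from these cell problems in the usual way.

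The hard part will be the passage to the limit in the singular reaction term and the identification of the three effective nonlinearities $L_1, L_2, L_3$. After the integration by parts that trades $\varepsilon^{-1}g$ for $G=\nabla_y R$, the limiting term splits according to how $G$ pairs against the oscillating gradient $\nabla u_\varepsilon$ (which two-scale converges to $\nabla u_0 + \nabla_y u_1$) and against the test-function correctors. Expanding $G(y,\tau,u_\varepsilon)$ around $u_0$ using the Lipschitz derivative bounds \eqref{eq8}--\eqref{eq9}, and averaging over the cell $Y\times\mathcal{T}$, produces: a divergence-form flux $\Div L_1(x,t,u_0)$ coming from the $\nabla u_0$ pairing, a convective term $-L_2(x,t,u_0)\cdot\nabla u_0$ coming from the $\partial_u G$ contribution coupled with the gradient, and a zeroth-order reaction $-L_3(x,t,u_0)$ from the remaining averaged term. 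Justifying these limits rigorously requires care: one must control the nonlinear terms under only weak convergence of $\nabla u_\varepsilon$, which is why the strong $L^2$ convergence of $u_\varepsilon$ and the Lipschitz structure of $g$ (hence of $G$) are indispensable. Finally, uniqueness of the limit problem, which follows from the Lipschitz monotonicity encoded in \textbf{A2} and the coercivity of $\hat A$, guarantees that the whole sequence converges, not merely a subsequence.
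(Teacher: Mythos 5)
Your overall architecture (energy estimates obtained by rewriting $\varepsilon^{-1}g$ through $G=\nabla_{y}R$, extension to the fixed domain, reiterated two-scale compactness with correctors $u_{1},u_{2}$, strong $L^{2}$ compactness, passage to the limit in the variational formulation) is the same as the paper's. There is, however, a genuine gap: you treat $\varepsilon^{-1}g$ as the only singular term, whereas a second singular term appears when the oscillating test function $\psi_{\varepsilon}=\psi_{0}+\varepsilon\psi_{1}(x,t,\tfrac{x}{\varepsilon},\tfrac{t}{\varepsilon^{2}})+\varepsilon^{2}\psi_{2}$ is differentiated in time: since the temporal oscillation is at scale $\varepsilon^{2}$ while $\psi_{1}$ carries only one factor of $\varepsilon$, $\partial_{t}\psi_{\varepsilon}$ contains $\varepsilon^{-1}\partial_{\tau}\psi_{1}$. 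Passing to the limit in $\varepsilon^{-1}\int\rho^{\varepsilon}\chi_{\Omega^{\varepsilon}}u_{\varepsilon}\,\partial_{\tau}\psi_{1}^{\varepsilon}$ is the heart of the matter: its limit cannot be expressed through $u_{0}$ alone but equals $|Z_{s}|\int_{\Omega_{T}}[\rho\chi_{Y_{m}}u_{1},\partial_{\tau}\psi_{1}]\,dxdt$ (Proposition \ref{p4}), and this identification only works after the first corrector is normalized in the weighted space $H_{\rho}^{1}(Y_{m})$, i.e. $\int_{Y_{m}}\rho u_{1}\,dy=0$ (Theorem \ref{t4} instead of the standard Theorem \ref{t3}). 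This term is precisely what turns the mesoscopic cell problem (\ref{eq54})--(\ref{eq55}) into a degenerate evolution problem in $\tau$ with weight $\rho\chi_{Y_{m}}$, posed in the space $\mathcal{W}$; one must further prove that $u_{1}\in\mathcal{W}$ and that this problem is well posed, using the antisymmetry identity (\ref{eq262}). None of this appears in your plan, and without it you obtain a cell problem that is purely elliptic in $y$, hence the wrong $\hat{A}$, $L_{1}$, $L_{2}$, $L_{3}$.

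Your attribution of the three effective nonlinearities is also not accurate. $L_{1}$ does not come from ``the $\nabla u_{0}$ pairing'' of $G$: it is the cell average $\iint_{Y_{m}\times\mathcal{T}}\tilde{A}\nabla_{y}\omega_{1}\,dyd\tau$ of the diffusive flux through the nonlinear part $\omega_{1}(\cdot,u_{0})$ of the corrector, which requires the affine splitting $u_{1}=\theta\cdot\nabla_{x}u_{0}+\omega_{1}(\cdot,u_{0})$ of the cell solution ((\ref{eq57})--(\ref{eq60})). The limit of $\int G^{\varepsilon}(u_{\varepsilon})\cdot\nabla\psi_{0}$ contributes, after an integration by parts in $x$, only the $|Z_{s}|\partial_{r}G$ part of the drift $L_{2}$; the remainder of $L_{2}$ and all of $L_{3}$ come from the term $\partial_{r}G\cdot(\nabla_{x}u_{0}+\nabla_{y}u_{1}+\nabla_{z}u_{2})$ after $\nabla_{z}u_{2}$ has been eliminated via the microscopic cell problem on $Z_{s}$ and $\nabla_{y}u_{1}$ via the affine splitting. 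So the two nested cell problems and the decomposition of $u_{1}$ must appear explicitly in the argument; ``expanding $G$ around $u_{0}$'' using (\ref{eq8})--(\ref{eq9}) alone will not produce the operators stated in the theorem.
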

The coefficients  and operators in the theorem above are defined in Section~\ref{S4}.

The paper is organized as follows. The a priori estimates and compactness
results are formulated and proved in Section~\ref{S2}. In Section~\ref{S3},
we recall the concept of multi-scale convergence and prove some preliminary
results. Finally, Section~\ref{S4} deals with the passage to the limit and
the derivation of the macroscopic model for problem~(\ref{eq1}).

\section{A priori estimates and compactness result\label{S2}}

Throughout, $C$ denotes a generic constant independent of $\varepsilon $
that can change from one line to the next, the centered dot stands for
the Euclidean scalar product in $\mathbb{R}^{N}$ while the absolute value or
modulus is denoted by $|\cdot |$.

With the connectedness of $\Omega ^{\varepsilon }$ in mind, the space 
\begin{equation}
V_{\varepsilon }=\{u\in H^{1}(\Omega ^{\varepsilon }):u=0\text{ on }\partial
\Omega \cap \partial \Omega ^{\varepsilon }\}
\end{equation}%
is Hilbertian when endowed with the gradient norm, 
\begin{equation}
\Vert u\Vert _{V_{\varepsilon }}=\Vert \nabla u\Vert _{L^{2}(\Omega
^{\varepsilon })}\qquad (u\in V_{\varepsilon }).
\end{equation}%
Therefore, the Lipschitzity of the function $g(y,\tau ,\cdot )$ and the
positivity assumption on the density function $\rho $ readily imply (see
e.g., \cite{AL83, paronetto04}) the existence of a unique solution $%
u_{\varepsilon }\in L^{2}(0,T;V_{\varepsilon })\cap \mathcal{C}%
(0,T;L^{2}(\Omega ^{\varepsilon }))$ to the problem (\ref{eq1}). Moreover
the following uniform estimates hold.

\begin{lemma}
\label{l1} Assume that the hypotheses \textbf{A1}-\textbf{A4} are satisfied.
Then the following estimates hold true: 
\begin{equation}  \label{eq11}
\sup_{0\leq t\leq T}\|u_\varepsilon(t)\|^2_{L^2(\Omega^\varepsilon)}\leq C,
\end{equation}
\begin{equation}  \label{eq12}
\int_0^T\|\nabla u_\varepsilon(t)\|^2_{L^2(\Omega^\varepsilon)}\,dt \leq C,
\end{equation}
\begin{equation}  \label{eq121}
\left\|\rho^\varepsilon\frac{\partial u_\varepsilon}{\partial t}%
\right\|_{L^2(0,T;V^{\prime }_\varepsilon)}\leq C,
\end{equation}
where $C$ is a positive constant which does not depend on $\varepsilon$.
\end{lemma}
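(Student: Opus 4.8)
The plan is to derive these three a priori estimates by the standard energy method for the parabolic problem (\ref{eq1}), being careful at each step to control the singular factor $\varepsilon^{-1}$ that multiplies the reaction term. The essential tool is that hypotheses A3 and A4(i) make the reaction term harmless despite its apparent blow-up: because $g(y,\tau,0)=0$ and $|\partial_u g|\leq C$, we have the Lipschitz-type bound $|g(y,\tau,u)|\leq C|u|$, and, more importantly, the zero-mean property $\int_Y g(y,\tau,u)\,dy=0$ allows us to pass the $\varepsilon^{-1}$ onto the auxiliary potential $G=\nabla_y R$ from Remark~\ref{r1}, which satisfies $|G(y,\tau,u)|\leq C|u|$ by (\ref{eq8}).

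First I would establish (\ref{eq11}) and (\ref{eq12}) together. Multiply the equation by $u_\varepsilon$ and integrate over $\Omega^\varepsilon$. Using A4(iii) to write $\int \rho^\varepsilon u_\varepsilon\,\partial_t u_\varepsilon = \tfrac12\tfrac{d}{dt}\int \rho^\varepsilon |u_\varepsilon|^2$, integrating the divergence term by parts (the boundary terms vanish by the Neumann condition on $\partial\Omega^\varepsilon\setminus\partial\Omega$ and the Dirichlet condition on $\partial\Omega\cap\partial\Omega^\varepsilon$), and invoking the coercivity in A1, I obtain
\begin{equation*}
\frac{1}{2}\frac{d}{dt}\int_{\Omega^\varepsilon}\rho^\varepsilon |u_\varepsilon|^2\,dx + \Lambda^{-1}\int_{\Omega^\varepsilon}|\nabla u_\varepsilon|^2\,dx \leq \frac{1}{\varepsilon}\int_{\Omega^\varepsilon} g\!\left(\tfrac{x}{\varepsilon},\tfrac{t}{\varepsilon^2},u_\varepsilon\right) u_\varepsilon\,dx.
\end{equation*}
The crux is the right-hand side. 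Writing $g=\Delta_y R=\operatorname{div}_y G$ and rescaling, the chain rule gives $\tfrac{1}{\varepsilon}\operatorname{div}_y G(\tfrac{x}{\varepsilon},\cdots) = \operatorname{div}_x[\,G(\tfrac{x}{\varepsilon},\cdots)\,] - (\partial_u G)(\tfrac{x}{\varepsilon},\cdots)\,\nabla u_\varepsilon$; this is the key identity that converts the singular term into an $\varepsilon$-free divergence plus a term linear in $\nabla u_\varepsilon$. Integrating by parts against $u_\varepsilon$ and using $|G|\leq C|u_\varepsilon|$ and $|\partial_u G|\leq C$ from (\ref{eq8}), the right-hand side is bounded by $C\int |u_\varepsilon||\nabla u_\varepsilon|\,dx$, which by Young's inequality is absorbed as $\tfrac12\Lambda^{-1}\int|\nabla u_\varepsilon|^2 + C\int|u_\varepsilon|^2$. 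A Gronwall argument in $t$ then yields (\ref{eq11}), and reinserting this into the time-integrated inequality yields (\ref{eq12}).

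For the dual estimate (\ref{eq121}), I would test $\rho^\varepsilon\partial_t u_\varepsilon$ against an arbitrary $\varphi\in L^2(0,T;V_\varepsilon)$, i.e.\ estimate $\langle \rho^\varepsilon\partial_t u_\varepsilon,\varphi\rangle = \int_{\Omega^\varepsilon}\big(-A^\varepsilon\nabla u_\varepsilon\cdot\nabla\varphi + \tfrac{1}{\varepsilon}g^\varepsilon\varphi\big)\,dx$ directly from the weak form. The diffusion term is controlled by $\Lambda\|\nabla u_\varepsilon\|_{L^2}\|\nabla\varphi\|_{L^2}$ using A1, and the singular reaction term is handled exactly as above: rewrite $\tfrac{1}{\varepsilon}g^\varepsilon\varphi$ via the $G$-identity so that no negative power of $\varepsilon$ survives, bounding it by $C(\|u_\varepsilon\|_{L^2}+\|\nabla u_\varepsilon\|_{L^2})\|\nabla\varphi\|_{L^2}$ after integrating by parts. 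Taking the supremum over $\varphi$ with $\|\varphi\|_{V_\varepsilon}\leq 1$ and then integrating over $(0,T)$, estimates (\ref{eq11}) and (\ref{eq12}) give (\ref{eq121}).

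The main obstacle, and the step requiring the most care, is ensuring that the constants $C$ are genuinely independent of $\varepsilon$ throughout. The delicate point is that the domain $\Omega^\varepsilon$ itself varies with $\varepsilon$, so the Poincaré inequality (needed to pass from the gradient norm to the $L^2$ norm on $V_\varepsilon$, and implicitly to make the energy method close) must hold with an $\varepsilon$-uniform constant; this relies on the connectedness assumption on $\Omega^\varepsilon$ together with the presence of the Dirichlet boundary portion $\partial\Omega\cap\partial\Omega^\varepsilon$. I would therefore state and invoke such a uniform Poincaré estimate (of the type available for perforated domains under the stated geometric hypotheses), and verify that all integrations by parts respect the mixed Neumann/Dirichlet boundary structure so that no spurious boundary contributions enter. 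The algebraic identity reducing the $\varepsilon^{-1}$ reaction term to bounded quantities is the conceptual heart, but its rigorous use on the $\varepsilon$-dependent geometry is where the real work lies.
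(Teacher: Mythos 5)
Your proposal follows essentially the same route as the paper's proof: the energy estimate via multiplication by $u_\varepsilon$, the identity $\tfrac{1}{\varepsilon}g^\varepsilon=\operatorname{div}G^\varepsilon-\partial_r G^\varepsilon\cdot\nabla u_\varepsilon$ from Remark~\ref{r1} combined with the bounds (\ref{eq8}), Young's inequality and Gronwall for (\ref{eq11})--(\ref{eq12}), and the same duality argument with Poincar\'e's inequality for (\ref{eq121}). Your added remark on the $\varepsilon$-uniformity of the Poincar\'e constant is a point the paper passes over silently, but it does not alter the argument.
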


\begin{proof}
Let $t\in (0,T]$. Multiplying the first equation in (\ref{eq1}) by $%
u_\varepsilon$ and integrating over $\Omega^\varepsilon\times (0,t)$ yields: 
\begin{equation}  \label{eq13}
\left\|(\rho^\varepsilon)^{\frac{1}{2}}u_\varepsilon(t)\right\|^2_{L^2(%
\Omega^\varepsilon)}-\left\|(\rho^\varepsilon)^{\frac{1}{2}%
}u^0\right\|^2_{L^2(\Omega^\varepsilon)}+2\int_0^t\!\!\!\int_{\Omega^%
\varepsilon}A^\varepsilon|\nabla u_\varepsilon(s)|^2
dxds=2\int_0^t\!\!\!\int_{\Omega^\varepsilon}\frac{1}{\varepsilon}%
g^\varepsilon(u_\varepsilon(s))u_\varepsilon(s)dxds.
\end{equation}
But Remark~\ref{r1} readily implies 
\begin{equation*}
\frac{1}{\varepsilon}g\left(\frac{x}{\varepsilon},\frac{t}{\varepsilon^2}%
,u_\varepsilon\right)=\text{div}\, G\left(\frac{x}{\varepsilon},\frac{t}{%
\varepsilon^2},u_\varepsilon\right)-\partial_r G\left(\frac{x}{\varepsilon},%
\frac{t}{\varepsilon^2},u_\varepsilon\right)\cdot \nabla u_\varepsilon,
\end{equation*}
which combined with (\ref{eq13}) leads to 
\begin{eqnarray*}
\left\|(\rho^\varepsilon)^{\frac{1}{2}}u_\varepsilon(t)\right\|^2_{L^2(%
\Omega^\varepsilon)}+2\int_0^t\!\!\!\int_{\Omega^\varepsilon}A^\varepsilon|%
\nabla u_\varepsilon(s)|^2dxds &\leq &\left\|(\rho^\varepsilon)^{\frac{1}{2}%
}u^0\right\|^2_{L^2(\Omega^\varepsilon)}-2\int_0^t\!\!\!\int_{\Omega^%
\varepsilon}G^\varepsilon(u_\varepsilon)\cdot\nabla u_\varepsilon dxds \\
& & -2\int_0^t\!\!\!\int_{\Omega^\varepsilon}(\partial_r
G^\varepsilon(u_\varepsilon)\cdot \nabla u_\varepsilon)u_\varepsilon dxds,
\end{eqnarray*}
where $G^\varepsilon(u_\varepsilon)=G\left(\frac{x}{\varepsilon},\frac{t}{%
\varepsilon^2},u_\varepsilon\right)$ and $\partial_r
G^\varepsilon(u_\varepsilon)=\frac{\partial}{\partial_r} G\left(\frac{x}{%
\varepsilon},\frac{t}{\varepsilon^2},u_\varepsilon\right)$. Making use of (%
\ref{eq8}), the ellipticity of the matrix $A$ and the boundedness of the
function $\rho$, we have 
\begin{equation}  \label{eq14}
\Lambda^{-1}\left\|u_\varepsilon(t)\right\|^2_{L^2(\Omega^\varepsilon)}+2%
\Lambda^{-1}\int_0^t\!\!\!\int_{\Omega^\varepsilon}|\nabla
u_\varepsilon(s)|^2dxds \leq
\Lambda\left\|u^0\right\|^2_{L^2(\Omega)}+4C\int_0^t\!\!\!\int_{\Omega^%
\varepsilon}|u_\varepsilon||\nabla u_\varepsilon|\,dxds.
\end{equation}
For any real number $\delta>0$, we have by Young's inequality, 
\begin{equation*}
4C\int_0^t\!\!\!\int_{\Omega^\varepsilon}|u_\varepsilon||\nabla
u_\varepsilon|\,dxds \leq
4C\delta\int_0^t\!\!\!\int_{\Omega^\varepsilon}|u_\varepsilon|^2\,dxds + 
\frac{C}{\delta}\int_0^t\!\!\!\int_{\Omega^\varepsilon}|\nabla
u_\varepsilon|^2\,dxds.
\end{equation*}
Choosing $\delta>0$ such that $\frac{1}{\Lambda}=\frac{C}{\delta}$, the
inequality (\ref{eq14}) yields: 
\begin{equation*}
\Lambda^{-1}\left\|u_\varepsilon(t)\right\|^2_{L^2(\Omega^\varepsilon)}+%
\Lambda^{-1}\int_0^t\|\nabla
u_\varepsilon(s)\|^2_{L^2(\Omega^\varepsilon)}\,dxds\leq
\Lambda\left\|u^0\right\|^2_{L^2(\Omega)}+4C\delta\int_0^t\left\|u_%
\varepsilon(s)\right\|^2_{L^2(\Omega^\varepsilon)}\,dxds,
\end{equation*}
which by means of the Gronwall's inequality first leads to (\ref{eq11}),
then to (\ref{eq12}).

As for (\ref{eq121}), it follows from (\ref{eq1}) that 
\begin{equation}
\left\Vert \rho ^{\varepsilon }\frac{\partial u_{\varepsilon }}{\partial t}%
\right\Vert _{L^{2}(0,T;V_{\varepsilon }^{\prime })}^{2}\leq
C\int_{0}^{T}\Vert \text{div}\,A^{\varepsilon }\nabla u_{\varepsilon }\Vert
_{V_{\varepsilon }^{\prime }}^{2}\,dt+C\int_{0}^{T}\left\Vert \frac{1}{%
\varepsilon }g^{\varepsilon }(u_{\varepsilon })\right\Vert _{V_{\varepsilon
}^{\prime }}^{2}\,dt.  \label{eq141}
\end{equation}%
On the one hand, (\ref{eq12}) and the boundedness of the matrix $A$ imply 
\begin{equation}
\int_{0}^{T}\Vert \text{div}\,A^{\varepsilon }\nabla u_{\varepsilon }\Vert
_{V_{\varepsilon }^{\prime }}\,dt\leq C.  \label{eq15}
\end{equation}%
On the other hand, we have 
\begin{equation*}
\left\Vert \frac{1}{\varepsilon }g^{\varepsilon }(u_{\varepsilon
})\right\Vert _{V_{\varepsilon }^{\prime }}=\sup_{\varphi \in V_{\varepsilon
},\Vert \varphi \Vert _{V_{\varepsilon }}=1}\left\vert \int_{\Omega
^{\varepsilon }}G^{\varepsilon }(u_{\varepsilon })\cdot \nabla \varphi
\,dx+\int_{\Omega ^{\varepsilon }}(\partial _{r}G^{\varepsilon
}(u_{\varepsilon })\cdot \nabla u_{\varepsilon })\varphi \,dx\right\vert ,
\end{equation*}%
which by means of the Poincar\'{e}'s inequality and (\ref{eq8}) yields 
\begin{eqnarray*}
\left\Vert \frac{1}{\varepsilon }g^{\varepsilon }(u_{\varepsilon
})\right\Vert _{V_{\varepsilon }^{\prime }} &\leq &C\sup_{\varphi \in
V_{\varepsilon },\Vert \varphi \Vert _{V_{\varepsilon }}=1}\left( \Vert
u_{\varepsilon }(t)\Vert _{L^{2}(\Omega ^{\varepsilon })}+\Vert \nabla
u_{\varepsilon }(t)\Vert _{L^{2}(\Omega ^{\varepsilon })}\Vert \nabla
\varphi \Vert _{L^{2}(\Omega ^{\varepsilon })}\right) \\
&\leq &C\left( \Vert u_{\varepsilon }(t)\Vert _{L^{2}(\Omega ^{\varepsilon
})}+\Vert \nabla u_{\varepsilon }(t)\Vert _{L^{2}(\Omega ^{\varepsilon
})}\right) \qquad \qquad t\in (0,T).
\end{eqnarray*}%
Therefore, making use of (\ref{eq11})-(\ref{eq12}) and the H\"{o}lder's
inequality we get 
\begin{equation}
\int_{0}^{T}\left\Vert \frac{1}{\varepsilon }g^{\varepsilon }(u_{\varepsilon
})\right\Vert _{V_{\varepsilon }^{\prime }}^{2}dt\leq C.  \label{eq151}
\end{equation}%
We combine (\ref{eq141}), (\ref{eq15}) and (\ref{eq151}) to get (\ref{eq121}%
).
\end{proof}

The next result relies on the following classical extension property (see
e.g., \cite{Acerbi1992}).

\begin{proposition}
\label{p1}For any $\varepsilon>0$, there exists a bounded linear operator $%
P_\varepsilon$ from $V_\varepsilon$ into $H^1_0(\Omega)$ such that for any $%
u\in V_\varepsilon$ we have: 
\begin{eqnarray}
&P_\varepsilon u = u\ \ \text{ in }\ \ \Omega^\varepsilon,  \label{eq6} \\
&\|P_\varepsilon u\|_{H^1_0(\Omega)}\leq C \|u\|_{V_\varepsilon},
\label{eq7}
\end{eqnarray}
where $C$ is a positive constant independent of $\varepsilon$.
\end{proposition}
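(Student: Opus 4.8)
The plan is to build $P_\varepsilon$ by reiterating the classical one-scale extension theorem of \cite{Acerbi1992} across the two perforation scales, namely the fine pores (scale $\varepsilon^2$) and the coarse cracks (scale $\varepsilon$). The cornerstone is the cell-level extension lemma: if $E\subset\mathbb{R}^N$ is a $Y$-periodic open set whose periodic repetition is connected with Lipschitz boundary, then for every $\eta>0$ there is a linear operator $Q_\eta$ from $H^1(\Omega\cap\eta E)$ into $H^1(\Omega)$ that restricts to the identity on $\Omega\cap\eta E$ and satisfies $\|Q_\eta v\|_{L^2(\Omega)}\le C\|v\|_{L^2(\Omega\cap\eta E)}$ together with $\|\nabla Q_\eta v\|_{L^2(\Omega)}\le C\|\nabla v\|_{L^2(\Omega\cap\eta E)}$, the constant $C$ depending only on $E$ and $\Omega$ and not on $\eta$. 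The decisive feature is that the gradient of $Q_\eta v$ is controlled by $\nabla v$ alone, and not by the full $H^1$-norm of $v$; this homogeneity is exactly what survives the rescaling $y\mapsto\eta y$ and produces an $\eta$-independent constant.

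For Step~A (pores) I recall that removing the cracks from $\Omega$ leaves the coarse matrix $\Omega\cap\varepsilon G_m$, since $G_c=\mathbb{R}^N\setminus\overline{G}_m$, and that $\Omega^\varepsilon$ is precisely this coarse matrix with the fine pores deleted. Applying the lemma at scale $\eta=\varepsilon^2$ with $E=G_p$, the periodic solid phase $G_m\setminus\overline{G}_s$, which is connected with Lipschitz boundary by the standing hypotheses on $Z_p$ and $Z_s$, I obtain an operator $P^{(1)}_\varepsilon$ extending each $u\in V_\varepsilon$ from $\Omega^\varepsilon$ to the filled coarse matrix $\Omega\cap\varepsilon G_m$, with $\|\nabla P^{(1)}_\varepsilon u\|_{L^2(\Omega\cap\varepsilon G_m)}\le C\|\nabla u\|_{L^2(\Omega^\varepsilon)}$ and a matching $L^2$-bound, the constant being independent of $\varepsilon$.

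For Step~B (cracks) the extended function now lies in $H^1(\Omega\cap\varepsilon G_m)$, and $\Omega\cap\varepsilon G_m$ is a single-scale perforated domain whose matrix $G_m$ is, by hypothesis, periodic, connected and Lipschitz. Applying the lemma once more at scale $\eta=\varepsilon$ yields $P^{(2)}_\varepsilon$ from $H^1(\Omega\cap\varepsilon G_m)$ into $H^1(\Omega)$ with $\|\nabla P^{(2)}_\varepsilon w\|_{L^2(\Omega)}\le C\|\nabla w\|_{L^2(\Omega\cap\varepsilon G_m)}$, again uniformly in $\varepsilon$. The separation of scales is what makes the reiteration clean: at the coarse scale the pores have already been filled in Step~A, so the crack-scale perforation is governed by the single fixed matrix $G_m$ and the classical theorem applies verbatim. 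Setting $P_\varepsilon=P^{(2)}_\varepsilon\circ P^{(1)}_\varepsilon$ gives (\ref{eq6}), since each factor restricts to the identity on its source domain, and composing the two gradient estimates yields $\|\nabla P_\varepsilon u\|_{L^2(\Omega)}\le C\|\nabla u\|_{L^2(\Omega^\varepsilon)}=C\|u\|_{V_\varepsilon}$.

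To reach (\ref{eq7}) I upgrade this gradient bound to the full $H^1_0(\Omega)$-norm by the Poincar\'e inequality on the fixed bounded domain $\Omega$, which is legitimate once $P_\varepsilon u$ is known to belong to $H^1_0(\Omega)$. Securing membership in $H^1_0(\Omega)$, rather than merely $H^1(\Omega)$, is the one genuinely delicate point, since the homogeneous Dirichlet datum of $u$ is prescribed only on $\partial\Omega\cap\partial\Omega^\varepsilon$; I would handle it as in \cite{Acerbi1992} by carrying out the cell extensions through boundary layers adapted to $\partial\Omega$ so that the vanishing trace is preserved, equivalently by extending $u$ by zero across the Dirichlet part and performing the two steps on boundary cells with the zero datum, which is compatible with the localized construction of $Q_\eta$. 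The main obstacle throughout is the uniformity of $C$ in $\varepsilon$: it rests entirely on the gradient-only cell estimate and on verifying that both periodic phases, the coarse matrix $G_m$ and the fine solid phase $G_p$, satisfy the connectedness and Lipschitz-regularity hypotheses that make the extension of \cite{Acerbi1992} available at each of the two scales.
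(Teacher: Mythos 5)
The paper offers no proof of this proposition at all: it simply invokes the classical extension theorem of \cite{Acerbi1992} as a known property, and your plan---apply that theorem once at the pore scale $\varepsilon^{2}$ and once at the crack scale $\varepsilon$, compose, and conclude with Poincar\'e on the fixed domain $\Omega$---is exactly the intended reiteration, so in spirit you and the paper agree. Two points in your write-up nevertheless need repair. First, in Step~A you apply the cell lemma ``with $E=G_{p}$, the periodic solid phase'': the identity $G_{p}=G_{m}\setminus\overline{G}_{s}$ is correct, but $G_{p}$ is the \emph{pore} phase, not the solid one, so as literally written you would be extending functions defined on the perforations rather than across them. The matrix supporting the fine-scale extension is $G_{s}$, the periodic repetition of $Z_{s}$; correspondingly, the hypotheses to check are the Lipschitz regularity of $\partial Z_{s}$ (assumed in the paper) and the connectedness of the repeated solid phase (encoded in the standing assumption that $\Omega^{\varepsilon}$ is connected), not ``hypotheses on $Z_{p}$ and $Z_{s}$.''

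Second, your cell lemma is stated for a fixed ambient domain, but in Step~A the ambient domain is $\Omega\cap\varepsilon G_{m}$, which varies with $\varepsilon$, so the theorem does not apply ``verbatim'' there; a priori the extension constant could degenerate along this family. What rescues the step is the precise form of the result in \cite{Acerbi1992}: the operator extends $u$ only into the retracted set $\{x\in\Omega\cap\varepsilon G_{m}:\operatorname{dist}(x,\partial(\Omega\cap\varepsilon G_{m}))>k_{0}\varepsilon^{2}\}$, with constants depending on the periodic set alone and \emph{not} on the ambient open set. You must then fill the residual $\varepsilon^{2}$-layer along the crack interfaces $\varepsilon\,\partial G_{m}$ before Step~B can be performed---for instance by a reflection across this Lipschitz interface, uniform in $\varepsilon$ by periodicity and scaling---and handle $\partial\Omega$ by the zero-extension you describe, which is legitimate precisely because $u$ vanishes on $\partial\Omega\cap\partial\Omega^{\varepsilon}$. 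With these repairs your composition $P^{(2)}_{\varepsilon}\circ P^{(1)}_{\varepsilon}$, the gradient-only estimates, and Poincar\'e indeed yield (\ref{eq6})--(\ref{eq7}); the claim of verbatim applicability is accurate only in Step~B, where the ambient domain is the fixed $\Omega$ and the matrix is $G_{m}$.
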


For a function $u\in L^{2}(0,T;V_{\varepsilon })$ we define its extension $%
P_{\varepsilon }u$ as follows 
\begin{equation}
(P_{\varepsilon }u)(t)=P_{\varepsilon }(u(t))\quad \text{ a.e. }t\in (0,T),
\end{equation}%
and $P_{\varepsilon }u\in L^{2}(0,T;H_{0}^{1}(\Omega ))$.

Bearing this in mind and owing to Proposition \ref{p1} (see precisely (\ref%
{eq7})), we have the following corollary.

\begin{corollary}
\label{c1} Under the hypotheses of Lemma \emph{\ref{l1}}, we have the
following uniform estimate 
\begin{equation}
\Vert P_{\varepsilon }u_{\varepsilon }\Vert _{L^{2}(0,T;H_{0}^{1}(\Omega
))}\leq C  \label{eq16}
\end{equation}%
where $C>0$ is a positive constant independent of $\varepsilon $ and where $%
P_{\varepsilon }$ is the extension operator defined in Proposition \emph{\ref%
{l1}}.
\end{corollary}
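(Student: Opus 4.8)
The plan is to deduce the estimate directly by combining the pointwise-in-time extension bound from Proposition~\ref{p1} with the spatial gradient estimate~\eqref{eq12} from Lemma~\ref{l1}. Since $P_\varepsilon u_\varepsilon$ has already been defined in the time-dependent setting by $(P_\varepsilon u_\varepsilon)(t)=P_\varepsilon(u_\varepsilon(t))$ for a.e.\ $t\in(0,T)$, and since $P_\varepsilon u_\varepsilon\in L^2(0,T;H^1_0(\Omega))$ was noted there, the quantity on the left of~\eqref{eq16} is well defined; it remains only to bound it uniformly in $\varepsilon$.

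The first step is to apply~\eqref{eq7} at almost every fixed time $t\in(0,T)$. Because the norm on $V_\varepsilon$ is the gradient norm $\|u\|_{V_\varepsilon}=\|\nabla u\|_{L^2(\Omega^\varepsilon)}$, this yields, for a.e.\ $t$,
\begin{equation*}
\|(P_\varepsilon u_\varepsilon)(t)\|_{H^1_0(\Omega)}\leq C\,\|u_\varepsilon(t)\|_{V_\varepsilon}=C\,\|\nabla u_\varepsilon(t)\|_{L^2(\Omega^\varepsilon)},
\end{equation*}
with $C$ the constant from Proposition~\ref{p1}, which is independent of $\varepsilon$. The second step is to square this inequality and integrate in time over $(0,T)$, so that
\begin{equation*}
\int_0^T\|(P_\varepsilon u_\varepsilon)(t)\|_{H^1_0(\Omega)}^2\,dt\leq C^2\int_0^T\|\nabla u_\varepsilon(t)\|_{L^2(\Omega^\varepsilon)}^2\,dt.
\end{equation*}
Invoking~\eqref{eq12} to bound the right-hand side by a constant independent of $\varepsilon$ gives exactly~\eqref{eq16}.

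This argument is essentially routine, so there is no genuine analytic obstacle; the only point requiring care is that the constant in the extension estimate~\eqref{eq7} be uniform in $\varepsilon$, which is precisely the content of Proposition~\ref{p1} and is the reason the classical extension property of \cite{Acerbi1992} is invoked. I would also remark that the estimate~\eqref{eq11} on $\|u_\varepsilon(t)\|_{L^2(\Omega^\varepsilon)}$ is not needed here, since the $H^1_0(\Omega)$ norm coincides with the gradient norm (by the Poincar\'e inequality on $\Omega$) and is therefore controlled solely by the $L^2(0,T;L^2)$ norm of the gradient furnished by~\eqref{eq12}.
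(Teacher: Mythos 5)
Your proof is correct and follows exactly the route the paper intends: the paper states the corollary as an immediate consequence of the uniform extension bound \eqref{eq7} applied at a.e.\ fixed time, combined with the gradient estimate \eqref{eq12} of Lemma \ref{l1}. Your additional remark that \eqref{eq11} is not needed is accurate and consistent with the paper's (unwritten) argument.
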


The next estimate requires some preliminaries. We define $R_\varepsilon:
H^1_0(\Omega)\to V_\varepsilon$ by $R_\varepsilon u =
u|_{\Omega_\varepsilon} $ for $u\in H^1_0(\Omega)$ (where $%
u|_{\Omega_\varepsilon}$ denotes the restriction of $u$ to $%
\Omega_\varepsilon$). Then, $R_\varepsilon$ is continuous since 
\begin{equation*}
\|R_\varepsilon u\|_{V_\varepsilon}\leq \|u\|_{H^1_0(\Omega)} \quad \text{
for }\ \ u\in H^1_0(\Omega).
\end{equation*}
We recall that the adjoint $R^*_\varepsilon: V^{\prime }_\varepsilon\to
H^{-1}(\Omega)$ of $R_\varepsilon$ satisfies, for all $v\in V^{\prime
}_\varepsilon$ and $\psi \in H^1_0(\Omega)$, 
\begin{equation*}
\langle R^*_\varepsilon v, \varphi \rangle=\langle v, R_\varepsilon \varphi
\rangle,
\end{equation*}
where the brackets on the left hand side denote the duality pairing between
the spaces $H^{-1}(\Omega)$ and $H^1_0(\Omega)$ while those on the right
hand side denote the duality pairing between $V^{\prime }_\varepsilon$ and $%
V_\varepsilon$. It is straightforward that 
\begin{equation}  \label{eq161}
R^*_\varepsilon u = \chi_{\Omega^\varepsilon}\, u \quad \text{ for }\ \ u\in
L^2(\Omega^\varepsilon\times (0,T)).
\end{equation}
Indeed, for any $\varphi\in L^2(0,T; H^1_0(\Omega))$, we have 
\begin{eqnarray*}
\langle R^*_\varepsilon u, \varphi\rangle &=& \langle u, R_\varepsilon
\varphi\rangle =
\int_0^T\!\!\int_{\Omega^\varepsilon}u\,(\varphi|_{\Omega^\varepsilon})dx\,dt
\\
&=&
\int_0^T\!\!\int_{\Omega}\chi_{\Omega^\varepsilon}(u\,\varphi)dx\,dt=%
\int_0^T\!\!\int_{\Omega}(\chi_{\Omega^\varepsilon}\,u)\,\varphi\,dx\,dt.
\end{eqnarray*}
By the way, it is worth noticing that combining (\ref{eq161}) and
Proposition \ref{p1} (see precisely (\ref{eq6}) therein), we have 
\begin{equation}  \label{eq17}
R^*_\varepsilon u = \chi_{\Omega^\varepsilon}( P_\varepsilon u) \quad \text{
for } \ \ u\in L^2(0,T;V_\varepsilon).
\end{equation}
Likewise, one can easily check that, for any $u\in L^2(0,T; V_\varepsilon)$
with $\frac{\partial u}{\partial t}\in L^2(0,T;V^{\prime }_\varepsilon)$, we
have

\begin{equation}  \label{eq171}
R^*_\varepsilon\left(\frac{\partial u_\varepsilon}{\partial t}\right) =\frac{%
\partial ( R^*_\varepsilon u_\varepsilon)}{\partial t}.
\end{equation}

We are now in a position to formulate another estimate.

\begin{lemma}
\label{l2} There exists a constant $C$ independent of $\varepsilon$ such
that 
\begin{equation}  \label{eq18}
\left\|(\rho^\varepsilon\chi_{\Omega^\varepsilon})\frac{\partial
(P_\varepsilon u_\varepsilon)}{\partial t}\right\|_{L^2(0,T;H^{-1}(\Omega))}%
\leq C.
\end{equation}
\end{lemma}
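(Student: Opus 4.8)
The plan is to recognize the functional $(\rho^\varepsilon\chi_{\Omega^\varepsilon})\frac{\partial(P_\varepsilon u_\varepsilon)}{\partial t}$ as the image, under the adjoint restriction operator $R_\varepsilon^*$, of the quantity $\rho^\varepsilon\frac{\partial u_\varepsilon}{\partial t}$ that is already controlled in $L^2(0,T;V_\varepsilon')$ by (\ref{eq121}). Since $R_\varepsilon^*$ is a contraction---its operator norm equals $\|R_\varepsilon\|\le 1$---the bound (\ref{eq18}) will then be immediate.

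Concretely, I would first establish the identity
\[
(\rho^\varepsilon\chi_{\Omega^\varepsilon})\frac{\partial(P_\varepsilon u_\varepsilon)}{\partial t}=R_\varepsilon^*\!\left(\rho^\varepsilon\frac{\partial u_\varepsilon}{\partial t}\right)\quad\text{in }L^2(0,T;H^{-1}(\Omega)).
\]
This is proved by the same mechanism as (\ref{eq161}) and (\ref{eq171}): test both sides against a product $\phi(t)\psi(x)$ with $\phi\in C_c^\infty(0,T)$ and $\psi\in H_0^1(\Omega)$, and integrate by parts in time. On the right, the definition of $R_\varepsilon^*$ turns $\psi$ into its restriction $\psi|_{\Omega^\varepsilon}$, and the weak formulation of (\ref{eq1}) (legitimate since $u_\varepsilon\in L^2(0,T;V_\varepsilon)$ with $\rho^\varepsilon\partial_t u_\varepsilon\in L^2(0,T;V_\varepsilon')$) gives $-\int_0^T\!\phi'\!\int_{\Omega^\varepsilon}\rho^\varepsilon u_\varepsilon\,\psi\,dx\,dt$. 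On the left, pulling the time-independent multiplier $\rho^\varepsilon\chi_{\Omega^\varepsilon}$ across the distributional time derivative yields $-\int_0^T\!\phi'\!\int_{\Omega^\varepsilon}\rho^\varepsilon (P_\varepsilon u_\varepsilon)\,\psi\,dx\,dt$, and the extension property (\ref{eq6}), namely $P_\varepsilon u_\varepsilon=u_\varepsilon$ on $\Omega^\varepsilon$, makes the two expressions coincide.

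With the identity in hand I would conclude by the chain
\[
\left\|(\rho^\varepsilon\chi_{\Omega^\varepsilon})\frac{\partial(P_\varepsilon u_\varepsilon)}{\partial t}\right\|_{L^2(0,T;H^{-1}(\Omega))}=\left\|R_\varepsilon^*\!\left(\rho^\varepsilon\frac{\partial u_\varepsilon}{\partial t}\right)\right\|_{L^2(0,T;H^{-1}(\Omega))}\le\left\|\rho^\varepsilon\frac{\partial u_\varepsilon}{\partial t}\right\|_{L^2(0,T;V_\varepsilon')}\le C,
\]
where the first inequality uses $\|R_\varepsilon^*\|\le 1$ and the second is precisely (\ref{eq121}).

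The step I expect to be most delicate is the identity itself, at the level of distributions in time rather than formally: one must justify moving the time-independent factor $\rho^\varepsilon\chi_{\Omega^\varepsilon}$ through the weak $\partial_t$, and confirm that the pairing $\langle\rho^\varepsilon\partial_t u_\varepsilon,\psi|_{\Omega^\varepsilon}\rangle$ admits the integration-by-parts representation above. Both points follow from the regularity $u_\varepsilon\in L^2(0,T;V_\varepsilon)$, $u_\varepsilon\in\mathcal{C}(0,T;L^2(\Omega^\varepsilon))$ together with $\rho^\varepsilon\partial_t u_\varepsilon\in L^2(0,T;V_\varepsilon')$, which is exactly the setting in which the weighted parabolic integration-by-parts formula holds; once this is secured, everything reduces to the pointwise-in-space equality supplied by (\ref{eq6}).
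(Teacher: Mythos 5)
Your proposal is correct and follows essentially the same route as the paper: the paper likewise reduces the estimate to the bound $\Vert R_\varepsilon^*(\rho^\varepsilon \partial_t u_\varepsilon)\Vert_{L^2(0,T;H^{-1}(\Omega))}\le C$ (obtained from (\ref{eq121}) and the contractivity of $R_\varepsilon$, exactly your duality argument) and then proves the identity $R_\varepsilon^*(\rho^\varepsilon \partial_t u_\varepsilon)=\rho^\varepsilon\chi_{\Omega^\varepsilon}\partial_t(P_\varepsilon u_\varepsilon)$ by combining (\ref{eq17}) and (\ref{eq171}), which is the same commutation you verify by testing against $\phi(t)\psi(x)$ and integrating by parts in time. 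The only cosmetic difference is that the paper cites the pre-established identities rather than redoing the tensor-product test-function computation.
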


\begin{proof}
We first prove that there exists a constant $C$ independent of $\varepsilon$
such that 
\begin{equation}  \label{eq19}
\left\|R^*_\varepsilon\left(\rho^\varepsilon\frac{\partial u_\varepsilon}{%
\partial t}\right)\right\|_{L^2(0,T;H^{-1}(\Omega))}\leq C.
\end{equation}
To do this, let $\varphi$ be arbitrarily fixed in $L^2(0,T;H^1_0(\Omega))$.
We have 
\begin{eqnarray*}
\left|\left\langle R^*_\varepsilon\left(\rho^\varepsilon\frac{\partial
u_\varepsilon}{\partial t}\right),
\varphi\right\rangle\right|&=&\left|\left\langle \rho^\varepsilon\frac{%
\partial u_\varepsilon}{\partial t},
R_\varepsilon\varphi\right\rangle\right|=\left|\int_0^T\left\langle
\rho^\varepsilon\frac{\partial u_\varepsilon}{\partial t},
R_\varepsilon\varphi\right\rangle_{V^{\prime }_\varepsilon,V_\varepsilon}
dt\right| \\
& \leq & \left\|\rho^\varepsilon\frac{\partial u_\varepsilon}{\partial t}%
\right\|_{L^2(0,T, V^{\prime
}_\varepsilon)}\|R_\varepsilon\varphi\|_{L^2(0,T;V_\varepsilon)} \\
&\leq & C\|R_\varepsilon\varphi\|_{L^2(0,T;V_\varepsilon)}\qquad\qquad
\qquad (\text{see (\ref{eq121})}) \\
&\leq & C\|\varphi\|_{L^2(0,T;H^1_0(\Omega))}.
\end{eqnarray*}
Having done this, it remains to prove that 
\begin{equation}  \label{eq20}
R^*_\varepsilon\left(\rho^\varepsilon\frac{\partial u_\varepsilon}{\partial t%
}\right)=\rho^\varepsilon\chi_{\Omega^\varepsilon}\frac{\partial
(P_\varepsilon u_\varepsilon)}{\partial t}.
\end{equation}
But with (\ref{eq17}) and (\ref{eq171}) in mind, it is easy to see that 
\begin{equation*}
R^*_\varepsilon\left(\rho^\varepsilon\frac{\partial u_\varepsilon}{\partial t%
}\right) =\rho^\varepsilon R^*_\varepsilon\left(\frac{\partial u_\varepsilon%
}{\partial t}\right) = \rho^\varepsilon\frac{\partial ( R^*_\varepsilon
u_\varepsilon)}{\partial t} = \rho^\varepsilon\frac{\partial (
\chi_{\Omega^\varepsilon} P_\varepsilon u_\varepsilon)}{\partial t} =
\rho^\varepsilon \chi_{\Omega^\varepsilon}\frac{\partial (P_\varepsilon
u_\varepsilon)}{\partial t},
\end{equation*}
and the proof is completed.
\end{proof}

The following compactness result will be the starting point of our
homogenization process.

\begin{theorem}
\label{t1} Assume that the sequence $(\rho^\varepsilon\chi_{\Omega^%
\varepsilon})_{\varepsilon>0}$ weakly $*$- converges in $L^{\infty}(\Omega)$%
, as $\varepsilon\to 0$, to some real function that is different from zero
almost everywhere in $\Omega$. Then the sequence $(P_\varepsilon
u_\varepsilon )_{\varepsilon>0}$ is relatively compact in $L^2
(0,T;L^2(\Omega))$.
\end{theorem}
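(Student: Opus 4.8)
The plan is to verify the Aubin--Lions--Simon compactness criterion in $L^{2}(0,T;L^{2}(\Omega))$ for the sequence $v_{\varepsilon}:=P_{\varepsilon}u_{\varepsilon}$, using the two ingredients already at hand: the bound $\Vert v_{\varepsilon}\Vert _{L^{2}(0,T;H_{0}^{1}(\Omega))}\leq C$ from Corollary \ref{c1}, and the bound on the \emph{weighted} time derivative from Lemma \ref{l2}. Writing $\theta_{\varepsilon}:=\rho^{\varepsilon}\chi_{\Omega^{\varepsilon}}$, the decisive structural facts are that $\theta_{\varepsilon}$ is independent of $t$, that $0\leq \theta_{\varepsilon}\leq \Lambda$ with $\theta_{\varepsilon}\geq \Lambda^{-1}$ on $\Omega^{\varepsilon}$ and $\theta_{\varepsilon}\equiv 0$ on the holes $\Omega\setminus \Omega^{\varepsilon}=\Omega_{p}^{\varepsilon}\cup \Omega_{c}^{\varepsilon}$, and that $w_{\varepsilon}:=\theta_{\varepsilon}v_{\varepsilon}=\rho^{\varepsilon}R_{\varepsilon}^{*}u_{\varepsilon}$ (by \eqref{eq17}) has $\partial_{t}w_{\varepsilon}=\theta_{\varepsilon}\partial_{t}v_{\varepsilon}\in L^{2}(0,T;H^{-1}(\Omega))$ with a uniform bound, which is exactly the content of Lemma \ref{l2}. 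For Simon's criterion the relative compactness in $L^{2}(\Omega)$ of the time averages $\{\int_{t_{1}}^{t_{2}}v_{\varepsilon}\,dt\}$ is immediate from the $H_{0}^{1}$-bound and Rellich's theorem; the real work is the uniform equicontinuity under time translations.

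First I would establish a weighted equicontinuity estimate. Since $\theta_{\varepsilon}$ is time-independent, $\theta_{\varepsilon}(v_{\varepsilon}(t+h)-v_{\varepsilon}(t))=\int_{t}^{t+h}\partial_{s}w_{\varepsilon}\,ds$ in $H^{-1}(\Omega)$, so the $H^{-1}$--$H_{0}^{1}$ duality gives
\[
\int_{\Omega}\theta_{\varepsilon}|v_{\varepsilon}(t+h)-v_{\varepsilon}(t)|^{2}\,dx\leq \Big\Vert \int_{t}^{t+h}\partial_{s}w_{\varepsilon}\,ds\Big\Vert _{H^{-1}(\Omega)}\,\Vert v_{\varepsilon}(t+h)-v_{\varepsilon}(t)\Vert _{H_{0}^{1}(\Omega)}.
\]
Integrating over $t\in (0,T-h)$, applying Cauchy--Schwarz together with Fubini, and invoking the uniform bounds on $\partial_{t}w_{\varepsilon}$ in $L^{2}(0,T;H^{-1}(\Omega))$ and on $v_{\varepsilon}$ in $L^{2}(0,T;H_{0}^{1}(\Omega))$ yields
\[
\int_{0}^{T-h}\!\!\int_{\Omega}\theta_{\varepsilon}|v_{\varepsilon}(t+h)-v_{\varepsilon}(t)|^{2}\,dx\,dt\leq Ch,
\]
with $C$ independent of $\varepsilon$ and $h$.

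The main obstacle is that this controls the increments only where $\theta_{\varepsilon}$ is bounded below, i.e. on $\Omega^{\varepsilon}$; on the holes $\theta_{\varepsilon}$ vanishes and all temporal information is lost there. To de-weight I would split $\Vert v_{\varepsilon}(t+h)-v_{\varepsilon}(t)\Vert _{L^{2}(\Omega)}^{2}$ over $\Omega^{\varepsilon}$ and over the holes. On $\Omega^{\varepsilon}$ the bound $\theta_{\varepsilon}\geq \Lambda^{-1}$ turns the weighted estimate into $\int_{0}^{T-h}\Vert v_{\varepsilon}(t+h)-v_{\varepsilon}(t)\Vert _{L^{2}(\Omega^{\varepsilon})}^{2}\,dt\leq \Lambda Ch$. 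On the holes, which have characteristic size $O(\varepsilon)$ (cracks) and $O(\varepsilon^{2})$ (pores), I would invoke a scaled Poincar\'{e}/extension inequality of the form $\Vert \phi\Vert _{L^{2}(\Omega\setminus \Omega^{\varepsilon})}^{2}\leq C\varepsilon^{2}\Vert \nabla \phi\Vert _{L^{2}(\Omega)}^{2}+C\Vert \phi\Vert _{L^{2}(\Omega^{\varepsilon})}^{2}$ for $\phi\in H_{0}^{1}(\Omega)$. This is the crux: it transfers temporal regularity from the matrix to the shrinking holes through the spatial gradient, and it is exactly here that the geometry (connected matrix of positive volume fraction, $Y_{m}$ and $Z_{s}$ Lipschitz) and its macroscopic counterpart, the non-degeneracy $\theta>0$ a.e., are needed. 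Applying it with $\phi=v_{\varepsilon}(t+h)-v_{\varepsilon}(t)$ and controlling the gradient term by the $L^{2}(0,T;H_{0}^{1})$-bound gives
\[
\int_{0}^{T-h}\Vert v_{\varepsilon}(t+h)-v_{\varepsilon}(t)\Vert _{L^{2}(\Omega)}^{2}\,dt\leq C\varepsilon^{2}+Ch.
\]

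Finally, to obtain the uniform equicontinuity demanded by Simon's criterion for an arbitrary sequence $\varepsilon_{n}\to 0$, I would absorb the residual $C\varepsilon_{n}^{2}$ by splitting the index set: the tail is handled by $\varepsilon_{n}^{2}\to 0$, while the finitely many remaining terms are each $L^{2}(0,T;L^{2}(\Omega))$-functions and so satisfy the translation equicontinuity individually. Both hypotheses of Simon's theorem then hold uniformly in $n$, and the relative compactness of $(P_{\varepsilon}u_{\varepsilon})_{\varepsilon>0}$ in $L^{2}(0,T;L^{2}(\Omega))$ follows.
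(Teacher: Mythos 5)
Your argument is essentially correct, but it is a genuinely different route from the paper's. The paper disposes of this theorem in one line: it invokes the abstract compactness theorem of Amar, Dall'Aglio and Paronetto for parabolic problems with degenerate, $x$-dependent weights (\cite[Theorem 2.3 and Remark 2.5]{ADP05}), whose hypotheses are exactly the $L^{2}(0,T;H_{0}^{1}(\Omega))$-bound of Corollary \ref{c1}, the bound on $\partial_{t}(\rho^{\varepsilon}\chi_{\Omega^{\varepsilon}}P_{\varepsilon}u_{\varepsilon})$ of Lemma \ref{l2}, and the weak-$*$ convergence of the weights to an a.e.\ nonzero limit --- which is precisely why that hypothesis is written into the statement. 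You instead run the Aubin--Lions--Simon machinery by hand: the weighted translation estimate $\int_{0}^{T-h}\!\int_{\Omega}\theta_{\varepsilon}|v_{\varepsilon}(t+h)-v_{\varepsilon}(t)|^{2}\leq Ch$ is correctly derived (the fundamental theorem of calculus for $w_{\varepsilon}\in H^{1}(0,T;H^{-1})$ and the $L^{2}$--$H^{-1}$--$H_{0}^{1}$ duality are all legitimate here), the de-weighting on $\Omega^{\varepsilon}$ uses $\rho\geq\Lambda^{-1}$ from A4(iii), and the tail-splitting at the end is a standard and valid way to restore uniform equicontinuity. The one load-bearing ingredient you assert without proof is the uniform scaled Poincar\'e/extension inequality $\Vert\phi\Vert_{L^{2}(\Omega\setminus\Omega^{\varepsilon})}^{2}\leq C\varepsilon^{2}\Vert\nabla\phi\Vert_{L^{2}(\Omega)}^{2}+C\Vert\phi\Vert_{L^{2}(\Omega^{\varepsilon})}^{2}$; it is true under the paper's geometric assumptions (cell-by-cell Poincar\'e with mean over $Y_{m}$, then over $Z_{s}$, plus rescaling and zero-extension for boundary cells), but it needs the two-step cascade cracks-to-matrix-to-solid and a proof, so you should either supply it or cite it. Note also that your proof never actually uses the stated weak-$*$ hypothesis: you replace it by the concrete microstructure that implies it. What each approach buys: the paper's is short and works for general degenerate weights satisfying the abstract hypothesis; yours is self-contained, quantitative (it gives an explicit modulus $C\varepsilon^{2}+Ch$), and makes transparent where the geometry of the perforations enters, at the cost of an extra geometric lemma and of being tied to this particular two-scale structure.
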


\begin{proof}
This is a direct consequence of the convergence hypothesis on the sequence $%
(\rho^\varepsilon\chi_{\Omega^\varepsilon})_{\varepsilon>0}$, Corollary \ref%
{c1} and Lemma \ref{l2}, by using \cite[ Theorem 2.3 and Remark 2.5]{ADP05}.
\end{proof}

\section{Multi-scale convergence and preliminary convergence results\label%
{S3}}

We recall the definition and some compactness results of the multi-scale
convergence theory~\cite{AB96, CMA, CPAA}. We also introduce our functional
setting and adapt some results of the multi-scale convergence method to our
framework. We finally prove some preliminary convergence results needed in
the homogenization process of the problem under consideration. We introduce
the following notations: $\Omega _{T}=\Omega \times (0,T)$ and $\mathcal{T}%
=(0,1)$.

\subsection{Multi-scale convergence method}

\begin{definition}
\label{d1}

\begin{itemize}
\item[(i)] A sequence $(u_\varepsilon)_{\varepsilon>0}\subset L^2(\Omega_T)$
is said to weakly multi-scale converge towards $u_0\in L^2(\Omega_T\times
Y\times Z\times \mathcal{T})$, and denoted $u_\varepsilon\xrightarrow{w-ms}
u_0$ in $L^2(\Omega_T)$, if as $\varepsilon\to 0$, 
\begin{equation}  \label{eq21}
\int_{\Omega_T}u_\varepsilon(x,t)\varphi(x,t,\frac{x}{\varepsilon},\frac{x}{%
\varepsilon^2},\frac{t}{\varepsilon^2})\,dxdt\to\iiiint_{\Omega_T\times
Y\times Z\times \mathcal{T}}u_0(x,t,y,z,\tau)\varphi(x,t,y,z,\tau)%
\,dxdtdydzd\tau
\end{equation}
for all $\varphi\in L^2(\Omega_T;\mathcal{C}_{per}(Y\times Z\times \mathcal{T%
}))$. \vspace{.4cm}

\item[(ii)] A sequence $(u_\varepsilon)_{\varepsilon>0}\subset L^2(\Omega_T)$
is said to strongly multi-scale converge towards $u_0\in L^2(\Omega_T\times
Y\times Z\times \mathcal{T})$, and denoted $u_\varepsilon\xrightarrow{s-ms}
u_0$ in $L^2(\Omega_T)$, if it multi scale converges weakly to $u_0$ in $%
L^2(\Omega_T\times Y\times Z\times \mathcal{T})$ and further satisfies

\begin{equation}  \label{eq22}
\|u_{\varepsilon}\|_{L^2(\Omega_T)} \to \|u_0\|_{L^2(\Omega_T\times Y\times
Z\times\mathcal{T})} \quad \text{as} \quad \varepsilon \to 0.
\end{equation}
\end{itemize}
\end{definition}

\begin{remark}
\label{r2}

\begin{itemize}
\item[(i)] \emph{Let }$u\in L^{2}(\Omega _{T};\mathcal{C}_{per}(Y\times
Z\times \mathcal{T}))$\emph{\ and define }$u^{\varepsilon }:\Omega
_{T}\rightarrow \mathbb{R}$\emph{\ by }$u^{\varepsilon }(x,t)=u(x,t,\frac{x}{%
\varepsilon },\frac{x}{\varepsilon ^{2}},\frac{t}{\varepsilon ^{2}})$\emph{%
,\ \ for }$\varepsilon >0$\emph{\ and }$(x,t)\in \Omega _{T}$\emph{. Then} $%
u^{\varepsilon }\xrightarrow{w-ms}u$\emph{\ and }$u^{\varepsilon }%
\xrightarrow{s-ms}u$\emph{\ in }$L^{2}(\Omega _{T})$\emph{\ as }$\varepsilon
\rightarrow 0$\emph{. We also have} 
\begin{equation*}
u^{\varepsilon }\rightarrow \widetilde{u}\text{\emph{\ in }}L^{2}(\Omega
_{T})\text{\emph{-weak\ \ as }}\varepsilon \rightarrow 0\text{\emph{, with }}%
\widetilde{u}(x,t)=\iiint_{Y\times Z\times \mathcal{T}}u({\cdot ,\cdot
,y,z,\tau })\,dydzd\tau .
\end{equation*}

\item[(ii)] \emph{If a sequence }$(u_{\varepsilon })_{\varepsilon >0}\subset
L^{2}(\Omega _{T})$\emph{\ multi-scale converges weakly in }$L^{2}(\Omega
_{T})$\emph{\ to some }$u_{0}\in L^{2}(\Omega _{T}\times Y\times Z\times 
\mathcal{T})$\emph{, in the sense of Definition~\ref{d1}, then (\ref{eq21})
still holds for }$\varphi \in \mathcal{C}(\overline{\Omega }%
_{T};L_{per}^{\infty }(Y\times Z\times \mathcal{T}))$.

\item[(iii)] \emph{Let }$u\in \mathcal{C}(\overline{\Omega }%
_{T};L_{per}^{\infty }(Y\times Z\times \mathcal{T}))$\emph{\ and define }$%
u^{\varepsilon }$\emph{\ like in (i) above. Then }$u^{\varepsilon }%
\xrightarrow{w-ms}u$\emph{\ in }$L^{2}(\Omega _{T})$\emph{\ as }$\varepsilon
\rightarrow 0$\emph{.}
\end{itemize}
\end{remark}

The following two compactness results are the cornerstones of the
multi-scale convergence theory.

\begin{theorem}
\label{t2} Any bounded sequence in $L^2(\Omega_T)$ admits a weakly
multi-scale convergent subsequence.
\end{theorem}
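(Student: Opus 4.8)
The plan is to realize the weak multi-scale limit as the Riesz representative of a bounded linear functional produced by a density-plus-diagonal argument. Let $(u_{\varepsilon})_{\varepsilon>0}$ be bounded in $L^{2}(\Omega_{T})$, say $\|u_{\varepsilon}\|_{L^{2}(\Omega_{T})}\leq C$. For $\varphi\in L^{2}(\Omega_{T};\mathcal{C}_{per}(Y\times Z\times\mathcal{T}))$ write $\varphi^{\varepsilon}(x,t)=\varphi(x,t,\frac{x}{\varepsilon},\frac{x}{\varepsilon^{2}},\frac{t}{\varepsilon^{2}})$ and introduce the functionals
\[
L_{\varepsilon}(\varphi)=\int_{\Omega_{T}}u_{\varepsilon}(x,t)\,\varphi^{\varepsilon}(x,t)\,dx\,dt.
\]
By the Cauchy--Schwarz inequality, $|L_{\varepsilon}(\varphi)|\leq C\,\|\varphi^{\varepsilon}\|_{L^{2}(\Omega_{T})}$. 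The first ingredient I need is the reiterated mean-value property for admissible test functions, namely
\[
\|\varphi^{\varepsilon}\|_{L^{2}(\Omega_{T})}\longrightarrow\|\varphi\|_{L^{2}(\Omega_{T}\times Y\times Z\times\mathcal{T})}\qquad\text{as }\varepsilon\to0,
\]
which rests on the fact that the three scales $1\gg\varepsilon\gg\varepsilon^{2}$ are well separated and which is part of the multi-scale machinery recalled from \cite{AB96,CMA,CPAA}. In particular $(\|\varphi^{\varepsilon}\|_{L^{2}(\Omega_{T})})_{\varepsilon}$ is bounded, so $(L_{\varepsilon}(\varphi))_{\varepsilon}$ is a bounded family of real numbers for each fixed $\varphi$.

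Next I would extract a subsequence converging against every test function. The space $L^{2}(\Omega_{T};\mathcal{C}_{per}(Y\times Z\times\mathcal{T}))$ is separable; fix a countable dense subset $\{\varphi_{k}\}_{k\geq1}$. For each fixed $k$, the bounded real sequence $(L_{\varepsilon}(\varphi_{k}))_{\varepsilon}$ has, by Bolzano--Weierstrass, a convergent subsequence; a Cantor diagonal procedure then yields a single subsequence (not relabelled) along which $L_{\varepsilon}(\varphi_{k})\to\mu(\varphi_{k})$ for every $k$. To propagate convergence to the whole space I would use the mean-value property once more: for arbitrary $\varphi$ and any $k$,
\[
\limsup_{\varepsilon,\varepsilon'\to0}\bigl|L_{\varepsilon}(\varphi)-L_{\varepsilon'}(\varphi)\bigr|\leq 2C\,\|\varphi-\varphi_{k}\|_{L^{2}(\Omega_{T}\times Y\times Z\times\mathcal{T})},
\]
since the cross term $L_{\varepsilon}(\varphi_{k})-L_{\varepsilon'}(\varphi_{k})$ vanishes in the limit. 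Letting $\varphi_{k}\to\varphi$ shows $(L_{\varepsilon}(\varphi))_{\varepsilon}$ is Cauchy, hence convergent to some $\mu(\varphi)$; the map $\mu$ is linear and satisfies $|\mu(\varphi)|\leq C\,\|\varphi\|_{L^{2}(\Omega_{T}\times Y\times Z\times\mathcal{T})}$.

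Finally I would represent the limit. Because $L^{2}(\Omega_{T};\mathcal{C}_{per}(Y\times Z\times\mathcal{T}))$ is dense in the Hilbert space $L^{2}(\Omega_{T}\times Y\times Z\times\mathcal{T})$ and $\mu$ is bounded for the latter norm, $\mu$ extends uniquely to a continuous linear functional on $L^{2}(\Omega_{T}\times Y\times Z\times\mathcal{T})$. The Riesz representation theorem then furnishes $u_{0}\in L^{2}(\Omega_{T}\times Y\times Z\times\mathcal{T})$ with $\mu(\varphi)=\int_{\Omega_{T}\times Y\times Z\times\mathcal{T}}u_{0}\,\varphi\,dx\,dt\,dy\,dz\,d\tau$, which is exactly the defining relation (\ref{eq21}); thus $u_{\varepsilon}\xrightarrow{w-ms}u_{0}$ along the extracted subsequence. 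I expect the main obstacle to be the mean-value property invoked in the first step: with two coupled spatial oscillations $x/\varepsilon$ and $x/\varepsilon^{2}$ together with the time oscillation $t/\varepsilon^{2}$, the classical single-scale Riemann--Lebesgue lemma does not apply directly, and one must establish the independence of the successive averages using the separation $\varepsilon^{2}\ll\varepsilon\ll1$, first for a dense class of smooth (e.g.\ trigonometric) test functions and then by density; granting this property, the remainder is the routine functional-analytic skeleton above.
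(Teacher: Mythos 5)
Your argument is correct and is essentially the standard compactness proof of multi-scale convergence found in the references the paper cites (\cite{AB96,CMA,CPAA}); the paper itself states Theorem~\ref{t2} without proof. You correctly isolate the one genuinely delicate ingredient — the reiterated mean-value property $\|\varphi^{\varepsilon}\|_{L^{2}(\Omega_{T})}\to\|\varphi\|_{L^{2}(\Omega_{T}\times Y\times Z\times\mathcal{T})}$ under the scale separation $\varepsilon^{2}\ll\varepsilon\ll1$ — and the surrounding separability, diagonal-extraction and Riesz-representation skeleton is sound.
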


Let $E$ be an ordinary sequence of real number converging to zero with $%
\varepsilon$.

\begin{theorem}
\label{t3} Let ($u_{\varepsilon})_{\varepsilon\in E}$ be a bounded sequence
in $L^2(0,T;H^1_0(\Omega))$. There exist a subsequence $E^{\prime }$ of $E$
and a triplet $(u_0,u_1,u_2)\in L^2(0,T;H^1_0(\Omega))\times
L^2(\Omega_T;L^2(\mathcal{T};H^1_{per}(Y)))\times L^2(\Omega_T;L^2(Y\times%
\mathcal{T};H^1_{per}(Z)))$ such that, as $E^{\prime }\ni\varepsilon\to 0$, 
\begin{eqnarray}
u_\varepsilon & \to & u_0 \qquad \text{ in }\quad L^2(0,T;H^1_0(\Omega))%
\text{-weak}  \label{eq23} \\
\notag \\
\frac{\partial u_\varepsilon}{\partial x_i} & \xrightarrow{w-ms} & \frac{%
\partial u_0}{ \partial x_i}+\frac{\partial u_1}{\partial y_i}+\frac{%
\partial u_2}{\partial z_i} \quad \text{ in }\quad L^2(\Omega_T)\quad (1\leq
j\leq N).  \label{eq24}
\end{eqnarray}
\end{theorem}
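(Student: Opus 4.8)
The plan is to combine the reflexivity of $L^2(0,T;H^1_0(\Omega))$ with the multi-scale compactness Theorem~\ref{t2}, and then to identify the structure of the limit gradient. First I would extract from the bounded sequence $(u_\varepsilon)$ a subsequence converging weakly in $L^2(0,T;H^1_0(\Omega))$ to some $u_0$; this is immediate from reflexivity and gives (\ref{eq23}), and in particular $\partial u_\varepsilon/\partial x_i\rightharpoonup\partial u_0/\partial x_i$ weakly in $L^2(\Omega_T)$. Since each $\partial u_\varepsilon/\partial x_i$ is bounded in $L^2(\Omega_T)$, Theorem~\ref{t2} yields a further subsequence $E'$ and limits $\xi_i\in L^2(\Omega_T\times Y\times Z\times\mathcal{T})$ with $\partial u_\varepsilon/\partial x_i\xrightarrow{w-ms}\xi_i$. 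Writing $\xi=(\xi_i)$, the whole task reduces to showing $\xi=\nabla_x u_0+\nabla_y u_1+\nabla_z u_2$ for correctors $u_1,u_2$ in the stated spaces, which is (\ref{eq24}).

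Before identifying $\xi$, I would record an auxiliary fact: the plain multi-scale limit $\zeta$ of $(u_\varepsilon)$ is independent of the spatial fast variables $y$ and $z$. This follows by testing $\nabla u_\varepsilon$ against $\varepsilon^2\Phi^\varepsilon$ and then against $\varepsilon\Phi^\varepsilon$ (with $\Phi$ smooth, $(x,t)$-compactly supported, periodic in $(y,z,\tau)$): after integrating by parts in $x$ the leading contributions are $\iiiint\zeta\,\mathrm{div}_z\Phi$ and, once $\mathrm{div}_z\Phi=0$ is imposed, $\iiiint\zeta\,\mathrm{div}_y\Phi$, both of which must vanish, forcing $\nabla_z\zeta=\nabla_y\zeta=0$. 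Consequently $\int u_\varepsilon\,\phi^\varepsilon\to\iiiint\zeta\,\phi$ with $\zeta$ insensitive to $y,z$, which is what is needed to pass to the limit in the products that occur below.

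The core step is the gradient identification via a Hilbert-space orthogonality argument. The target decomposition amounts to saying that, for a.e. $(x,t,\tau)$, the fluctuation $\xi-\langle\xi\rangle_{Y\times Z}$ lies in the closure of $\mathcal{G}=\{\nabla_y\phi_1+\nabla_z\phi_2\}$, whose orthogonal complement in $L^2_{per}(Y\times Z)^N$ is $\{F:\mathrm{div}_z F=0,\ \mathrm{div}_y\langle F\rangle_z=0\}$. I would therefore take a smooth test field $\Psi(x,t,y,z,\tau)$, $(x,t)$-compactly supported and periodic, with $\mathrm{div}_z\Psi=0$, $\mathrm{div}_y\langle\Psi\rangle_z=0$ and $\langle\Psi\rangle_{Y\times Z}=0$, form $\Psi^\varepsilon(x,t)=\Psi(x,t,x/\varepsilon,x/\varepsilon^2,t/\varepsilon^2)$, and evaluate $\int_{\Omega_T}\nabla u_\varepsilon\cdot\Psi^\varepsilon$. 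Integration by parts turns this into $-\int u_\varepsilon\,\mathrm{div}_x\Psi^\varepsilon$, in which the $\varepsilon^{-2}$ term drops because $\mathrm{div}_z\Psi=0$; the surviving $\varepsilon^{-1}$ term carries $\mathrm{div}_y\Psi$, which has zero $z$-average, so writing $\mathrm{div}_y\Psi=\mathrm{div}_z W$ for a $z$-cell potential $W$ and re-expanding the scales shows it tends to $0$, while the $O(1)$ term vanishes since $\langle\Psi\rangle_{Y\times Z}=0$. Passing to the limit gives $\iiiint\xi\cdot\Psi=0$ for every admissible $\Psi$, i.e. the fluctuation is orthogonal to $\mathcal{G}^\perp$ and hence lies in $\overline{\mathcal{G}}$; a de~Rham-type lemma for reiterated periodic fields then produces $u_1\in L^2(\Omega_T;L^2(\mathcal{T};H^1_{per}(Y)))$ and $u_2\in L^2(\Omega_T;L^2(Y\times\mathcal{T};H^1_{per}(Z)))$, the $\tau$-dependence being unconstrained by the argument and thus consistent with the stated spaces.

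The main obstacle is the control of the fast time scale $\tau=t/\varepsilon^2$ in the leading term. The orthogonality argument only constrains the spatial fast variables, so it identifies the mean of $\xi$ as $\nabla_x\zeta(x,t,\tau)$ and a priori permits a genuine $\tau$-dependence; ruling this out, so that $\zeta$ (hence $u_0$) is $\tau$-independent as the statement requires, cannot follow from the $L^2(0,T;H^1_0(\Omega))$ bound alone, as the purely oscillating sequence $v(x)\sin(2\pi t/\varepsilon^2)$ already shows. I would close this point either by invoking the reiterated multi-scale compactness machinery of \cite{AB96, CMA, CPAA} to which the statement is tailored, or, in the application to (\ref{eq1}), by using the uniform time-derivative control of Lemma~\ref{l2}: integrating in time against a zero-mean profile $\beta(t/\varepsilon^2)$ and using that bound forces the fast-time average and shows the leading coefficient carries no $\tau$-dependence, so that $\langle\xi\rangle_{Y\times Z\times\mathcal{T}}=\nabla_x u_0$ indeed pins down the leading term of (\ref{eq24}).
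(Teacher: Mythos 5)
The paper does not actually prove Theorem~\ref{t3}: it is recalled from \cite{AB96, CMA, CPAA} as one of the ``cornerstones'' of multi-scale convergence (and even the tailored version, Theorem~\ref{t4}, is only proved by reference to \cite{douanlaaa}), so there is no in-house argument to compare yours with. Taken on its own terms, your proof follows the standard route: weak compactness in the reflexive space $L^{2}(0,T;H_{0}^{1}(\Omega))$ for (\ref{eq23}), Theorem~\ref{t2} applied to the partial derivatives, elimination of the $y$- and $z$-dependence of the multi-scale limit of $u_{\varepsilon}$ by testing $\nabla u_{\varepsilon}$ against $\varepsilon^{2}\Phi^{\varepsilon}$ and $\varepsilon\Phi^{\varepsilon}$, and the orthogonality / de~Rham identification of the fluctuating part of $\xi$ as $\nabla_{y}u_{1}+\nabla_{z}u_{2}$. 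The spatial part of this is sound; the only points left implicit are routine (density of smooth admissible $\Psi$ in the orthogonal complement of your $\mathcal{G}$, and closedness of the sum of the two gradient subspaces, which holds because $\{\nabla_{y}\phi_{1}\}$ and $\{\nabla_{z}\phi_{2}\}$ are mutually orthogonal in $L_{per}^{2}(Y\times Z)^{N}$).

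The genuine issue is exactly the one you flag in your last paragraph, and you are right that it cannot be closed from the stated hypothesis: nothing in an $L^{2}(0,T;H_{0}^{1}(\Omega))$ bound forces $\langle\xi\rangle_{Y\times Z}$ to be $\tau$-independent. Indeed $u_{\varepsilon}=v(x)\sin(2\pi t/\varepsilon^{2})$ is a bona fide counterexample to the statement as written: its gradient multi-scale converges to $\nabla v(x)\sin(2\pi\tau)$, whose $Y\times Z$-average genuinely depends on $\tau$, whereas $\nabla_{x}u_{0}+\nabla_{y}u_{1}+\nabla_{z}u_{2}$ always has $Y\times Z$-average $\nabla_{x}u_{0}(x,t)$. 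So the gap lies in the theorem's hypotheses rather than in your argument; the correct versions in the literature either impose a bound on $\partial_{t}u_{\varepsilon}$ (which is precisely what (\ref{eq121}) and Lemma~\ref{l2} supply for the solutions of (\ref{eq1})) or allow the leading profile to depend on $\tau$. Be aware, though, that neither of your proposed repairs yields a self-contained proof of Theorem~\ref{t3} exactly as stated: citing \cite{AB96, CMA, CPAA} at that point is circular, and the Lemma~\ref{l2} estimate is a property of the particular sequence of solutions, not of an arbitrary bounded sequence in $L^{2}(0,T;H_{0}^{1}(\Omega))$.
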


We need to tailor Theorem~\ref{t3} according to our needs. The functions $%
u_1 $ and $u_2$ in Theorem~\ref{t3} are unique up to additive function of
variables $x,t\tau$ and $x,t,y,\tau$, respectively. It is crucial to fix the
choice of $u_1$. We introduce the space 
\begin{equation*}
H^1_{\rho}(Y_m)=\{ u\in H^1_{per}(Y) : \int_{Y_m}\rho(y)u(y)\,dy=0\},
\end{equation*}
which is a closed subspace of $H^1_{per}(Y)$ since it is the kernel of the
bounded linear functional $u\mapsto \int_{Y_m}\rho(y)u(y)\,dy$ defined on $%
H^1_{per}(Y)$. The version of Theorem~\ref{t3} that will be used in the
sequel formulates as follows.

\begin{theorem}
\label{t4} Let ($u_{\varepsilon})_{\varepsilon\in E}$ be a bounded sequence
in $L^2(0,T;H^1_0(\Omega))$. There exist a subsequence $E^{\prime }$ of $E$
and a triplet $(u_0,u_1,u_2)\in L^2(0,T;H^1_0(\Omega))\times
L^2(\Omega_T;L^2(\mathcal{T};H^1_{\rho}(Y_m)))\times L^2(\Omega_T;L^2(Y\times%
\mathcal{T};H^1_{per}(Z)))$ such that, as $E^{\prime }\ni\varepsilon\to 0$, 
\begin{eqnarray}
u_\varepsilon & \to & u_0 \qquad \text{ in }\quad L^2(0,T;H^1_0(\Omega))%
\text{-weak}  \label{eq25} \\
\notag \\
\frac{\partial u_\varepsilon}{\partial x_i} & \xrightarrow{w-ms} & \frac{%
\partial u_0}{ \partial x_i}+\frac{\partial u_1}{\partial y_i}+\frac{%
\partial u_2}{\partial z_i} \quad \text{ in }\quad L^2(\Omega_T)\quad (1\leq
j\leq N).  \label{eq26}
\end{eqnarray}
\end{theorem}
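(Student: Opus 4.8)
The plan is to obtain Theorem~\ref{t4} as a direct refinement of Theorem~\ref{t3}, the only new content being the normalization that pins down the corrector $u_1$ inside the smaller space $L^2(\Omega_T;L^2(\mathcal{T};H^1_\rho(Y_m)))$. First I would apply Theorem~\ref{t3} verbatim to the bounded sequence $(u_\varepsilon)_{\varepsilon\in E}$: this yields a subsequence $E'$ and a triplet $(u_0,\widetilde u_1,u_2)$ with $\widetilde u_1\in L^2(\Omega_T;L^2(\mathcal{T};H^1_{per}(Y)))$ satisfying the weak convergence \eqref{eq23} and the multi-scale gradient convergence \eqref{eq24}. The convergence \eqref{eq25} is then nothing but \eqref{eq23}, so the whole task reduces to replacing $\widetilde u_1$ by a representative lying in $H^1_\rho(Y_m)$ for a.e. $(x,t,\tau)$ while keeping \eqref{eq24} intact.

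The mechanism I would exploit is the one already flagged before the statement: in the limit \eqref{eq24} the corrector enters only through $\partial\widetilde u_1/\partial y_i$, so $\widetilde u_1$ may be altered by any additive term independent of $y$, i.e. depending on $(x,t,\tau)$ alone, without disturbing the convergence. Accordingly I would set
\begin{equation*}
m(x,t,\tau)=\left(\int_{Y_m}\rho(y)\,dy\right)^{-1}\int_{Y_m}\rho(y)\,\widetilde u_1(x,t,y,\tau)\,dy,\qquad u_1:=\widetilde u_1-m.
\end{equation*}
The denominator is strictly positive because $\rho\ge\Lambda^{-1}>0$ by A4(iii) and $|Y_m|>0$ (both $Z_p$ and $Z_s$ having positive Lebesgue measure), so $m$ is well defined. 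By construction $\int_{Y_m}\rho(y)u_1(x,t,y,\tau)\,dy=0$, hence $u_1(x,t,\cdot,\tau)\in H^1_\rho(Y_m)$ for a.e. $(x,t,\tau)$.

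It then remains to check the regularity bookkeeping and that the convergences survive. Since $\rho$ is bounded and $\widetilde u_1\in L^2$, the Cauchy--Schwarz inequality shows $m\in L^2(\Omega_T\times\mathcal{T})$; as $m$ is independent of $y$ it adds nothing to the $y$-gradient, so $u_1\in L^2(\Omega_T;L^2(\mathcal{T};H^1_\rho(Y_m)))$ with $\partial u_1/\partial y_i=\partial\widetilde u_1/\partial y_i$. Consequently the right-hand side of \eqref{eq24} is unchanged when $\widetilde u_1$ is replaced by $u_1$, which is precisely \eqref{eq26}, while \eqref{eq25} holds as noted. The only point requiring genuine justification, rather than routine estimation, is the claim that the corrector is unique up to a $y$-independent function; I would argue this from the structure of the multi-scale gradient limit, observing that two admissible pairs $(\widetilde u_1,u_2)$ and $(\widetilde u_1',u_2')$ force $\partial_{y_i}(\widetilde u_1-\widetilde u_1')+\partial_{z_i}(u_2-u_2')=0$ in $L^2(\Omega_T\times Y\times Z\times\mathcal{T})$, whence (testing against $y$- and $z$-dependent test functions and integrating out $Z$) $\partial_{y_i}(\widetilde u_1-\widetilde u_1')=0$, so any two choices differ exactly by a function of $(x,t,\tau)$. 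This is the expected main obstacle, but it is standard in the reiterated multi-scale setting and legitimizes the normalization above.
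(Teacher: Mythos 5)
Your argument is correct and is essentially the paper's intended approach: the paper only cites \cite{douanlaaa} for the proof, but the discussion preceding the statement makes clear that the whole point is precisely your normalization — apply Theorem~\ref{t3}, then subtract from the corrector its $\rho$-weighted mean over $Y_m$ (well defined since $\rho\geq\Lambda^{-1}$ and $|Y_m|>0$), which leaves $\nabla_y u_1$ and hence \eqref{eq24} untouched while placing $u_1$ in $H^1_{\rho}(Y_m)$. Your closing digression on uniqueness up to $y$-independent functions is not needed for the conclusion, but it is correctly argued and consistent with the paper's own remark.
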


\begin{proof}
The proof is similar to the proof of \cite[Theorem~2.5]{douanlaaa}.
\end{proof}

The following weak-strong convergence result (see \cite[Theorem 6]{SW11} for
its proof) and its corollary are worth recalling since they will be used in
the sequel.

\begin{theorem}
\label{t5} Let $(u_\varepsilon)_{\varepsilon\in E}\subset L^2(\Omega_T)$ and 
$(v_\varepsilon)_{\varepsilon\in E}\subset L^2(\Omega_T)$ be two sequences
such that $u_\varepsilon \xrightarrow{w-ms} u_0$ \ and \ $v_\varepsilon %
\xrightarrow{s-ms} v_0$ in $L^2(\Omega_T)$ with $u_0,v_0\in
L^2(\Omega_T\times Y\times Z\times \mathcal{T})$. Then $u_\varepsilon
v_\varepsilon \xrightarrow{w-ms} u_0 v_0$ in $L^1(\Omega_T)$.
\end{theorem}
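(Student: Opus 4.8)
The plan is to prove the product convergence by a density-plus-energy argument, the whole point being that \emph{strong} multi-scale convergence of $(v_\varepsilon)$ is what allows one to replace $v_\varepsilon$, in $L^2(\Omega_T)$-norm, by an oscillating test function. Throughout set $\mathcal{Q}=\Omega_T\times Y\times Z\times\mathcal{T}$, and for $\varphi\in\mathcal{C}(\overline{\Omega}_T;\mathcal{C}_{per}(Y\times Z\times\mathcal{T}))$ write $\varphi^\varepsilon(x,t)=\varphi(x,t,\tfrac{x}{\varepsilon},\tfrac{x}{\varepsilon^2},\tfrac{t}{\varepsilon^2})$; since $u_\varepsilon v_\varepsilon$ is only controlled in $L^1(\Omega_T)$, by weak multi-scale convergence of the product I mean that $\int_{\Omega_T}u_\varepsilon v_\varepsilon\,\varphi^\varepsilon\to\iiiint_{\mathcal{Q}}u_0v_0\,\varphi$ for every such (continuous, bounded) $\varphi$. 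The relevant sequences arise as bounded sequences in $L^2(\Omega_T)$ (cf. Theorem~\ref{t2}), so I may assume $\sup_\varepsilon\|u_\varepsilon\|_{L^2(\Omega_T)}<\infty$; for $(v_\varepsilon)$ boundedness is automatic from (\ref{eq22}).

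The key step is an $L^2$-approximation lemma. I would fix $\psi\in\mathcal{C}(\overline{\Omega}_T;\mathcal{C}_{per}(Y\times Z\times\mathcal{T}))$ and expand
\begin{equation*}
\|v_\varepsilon-\psi^\varepsilon\|_{L^2(\Omega_T)}^2=\|v_\varepsilon\|_{L^2(\Omega_T)}^2-2\int_{\Omega_T}v_\varepsilon\,\psi^\varepsilon\,dxdt+\|\psi^\varepsilon\|_{L^2(\Omega_T)}^2 .
\end{equation*}
As $\varepsilon\to0$, the first term tends to $\|v_0\|_{L^2(\mathcal{Q})}^2$ by strong multi-scale convergence (\ref{eq22}); the middle term tends to $\iiiint_{\mathcal{Q}}v_0\psi$ by weak multi-scale convergence, $\psi$ being an admissible test function; and the last term tends to $\iiiint_{\mathcal{Q}}\psi^2$ by Remark~\ref{r2}(i). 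Hence $\|v_\varepsilon-\psi^\varepsilon\|_{L^2(\Omega_T)}^2\to\|v_0-\psi\|_{L^2(\mathcal{Q})}^2$.

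With this in hand I would fix $\varphi$ as above and $\delta>0$, choose $\psi$ in the same class with $\|v_0-\psi\|_{L^2(\mathcal{Q})}<\delta$ (density), and split
\begin{align*}
\int_{\Omega_T}u_\varepsilon v_\varepsilon\,\varphi^\varepsilon-\iiiint_{\mathcal{Q}}u_0v_0\,\varphi
&=\int_{\Omega_T}u_\varepsilon\,(v_\varepsilon-\psi^\varepsilon)\,\varphi^\varepsilon\\
&\quad+\Big(\int_{\Omega_T}u_\varepsilon\,(\psi\varphi)^\varepsilon-\iiiint_{\mathcal{Q}}u_0\,\psi\varphi\Big)+\iiiint_{\mathcal{Q}}u_0\,(\psi-v_0)\,\varphi .
\end{align*}
The first term is bounded by $\|\varphi\|_\infty\,\|u_\varepsilon\|_{L^2}\,\|v_\varepsilon-\psi^\varepsilon\|_{L^2}$, whose limsup is $\le C\|\varphi\|_\infty\delta$ by the key step and the boundedness of $(u_\varepsilon)$; the middle term tends to $0$ because $\psi\varphi$ again lies in $\mathcal{C}(\overline{\Omega}_T;\mathcal{C}_{per}(Y\times Z\times\mathcal{T}))$ (so $(\psi\varphi)^\varepsilon$ is an admissible oscillating test function) and $u_\varepsilon$ converges weakly multi-scale to $u_0$; the last term is bounded by $\|\varphi\|_\infty\,\|u_0\|_{L^2(\mathcal{Q})}\,\delta$ by Cauchy--Schwarz. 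Letting $\varepsilon\to0$ and then $\delta\to0$ yields the claim.

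The main obstacle is exactly the key step: extracting from the mere norm convergence (\ref{eq22}) a genuine strong $L^2$-approximation $\|v_\varepsilon-\psi^\varepsilon\|_{L^2(\Omega_T)}\to\|v_0-\psi\|_{L^2(\mathcal{Q})}$, which is what upgrades weak to strong behaviour and makes the product pass to the limit; the remaining work is routine density and boundedness bookkeeping. A secondary point to keep straight is that, because $u_\varepsilon v_\varepsilon\in L^1(\Omega_T)$ only, one must restrict to continuous bounded test functions $\varphi$, so that both $\varphi^\varepsilon$ and $(\psi\varphi)^\varepsilon$ are uniformly bounded and $\psi\varphi$ remains admissible.
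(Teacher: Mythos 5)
Your argument is correct: the energy expansion $\|v_\varepsilon-\psi^\varepsilon\|_{L^2(\Omega_T)}^2\to\|v_0-\psi\|_{L^2(\Omega_T\times Y\times Z\times\mathcal{T})}^2$ is exactly what the strong convergence hypothesis (\ref{eq22}) is designed to deliver, and the three-term splitting with a density choice of $\psi$ closes the argument; the only implicit ingredients (boundedness of $(u_\varepsilon)$ in $L^2(\Omega_T)$, density of $\mathcal{C}(\overline{\Omega}_T;\mathcal{C}_{per}(Y\times Z\times\mathcal{T}))$ in $L^2(\Omega_T\times Y\times Z\times\mathcal{T})$, and the interpretation of weak multi-scale convergence in $L^1$ as testing against continuous bounded $\varphi$) are all standard and correctly flagged. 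Note that the paper itself offers no proof of this statement, deferring instead to \cite[Theorem 6]{SW11}; your density-plus-energy argument is precisely the standard mechanism behind that cited result, so this is the same approach, just written out in full.
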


\begin{corollary}
\label{c2} Let $(u_\varepsilon)_{\varepsilon\in E}\subset L^2(\Omega_T)$ and 
$(v_\varepsilon)_{\varepsilon\in E}\subset L^2(\Omega_T)\cap
L^\infty(\Omega_T)$ be two sequences such that $u_\varepsilon %
\xrightarrow{w-ms} u_0$ \ and \ $v_\varepsilon \xrightarrow{s-ms} v_0$ in $%
L^2(\Omega_T)$ with $u_0,v_0\in L^2(\Omega_T\times Y\times Z\times \mathcal{T%
})$. Assume further that $(v_\varepsilon)_{\varepsilon\in E}$ is bounded in $%
L^\infty(\Omega_T)$. Then $u_\varepsilon v_\varepsilon \xrightarrow{w-ms}
u_0 v_0$ in $L^2(\Omega_T)$.
\end{corollary}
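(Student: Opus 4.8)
The plan is to upgrade the $L^{1}(\Omega_T)$ convergence already furnished by Theorem~\ref{t5} to the claimed $L^{2}(\Omega_T)$ convergence, the new hypothesis---the $L^{\infty}$-bound on $(v_\varepsilon)$---being precisely what keeps the product in $L^{2}$. First I would observe that $(u_\varepsilon v_\varepsilon)_{\varepsilon\in E}$ is bounded in $L^{2}(\Omega_T)$: choosing in Definition~\ref{d1}(i) test functions $\varphi$ depending only on $(x,t)$ shows that $u_\varepsilon$ converges weakly in $L^{2}(\Omega_T)$ and is hence bounded there, so that, combined with the assumed $L^{\infty}$-bound, $\|u_\varepsilon v_\varepsilon\|_{L^{2}(\Omega_T)}\le\|v_\varepsilon\|_{L^{\infty}(\Omega_T)}\,\|u_\varepsilon\|_{L^{2}(\Omega_T)}\le C$.

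By the compactness Theorem~\ref{t2}, from an arbitrary subsequence of $(u_\varepsilon v_\varepsilon)$ I can extract a further subsequence weakly multi-scale converging in $L^{2}(\Omega_T)$ to some $w_0\in L^{2}(\Omega_T\times Y\times Z\times\mathcal{T})$. It then remains to identify $w_0=u_0v_0$. To this end I would test this $L^{2}$ limit against an arbitrary $\varphi\in\mathcal{C}(\overline{\Omega}_T;\mathcal{C}_{per}(Y\times Z\times\mathcal{T}))$, writing $\varphi^{\varepsilon}(x,t)=\varphi(x,t,\tfrac{x}{\varepsilon},\tfrac{x}{\varepsilon^{2}},\tfrac{t}{\varepsilon^{2}})$; since such $\varphi$ belongs to the admissible class $L^{2}(\Omega_T;\mathcal{C}_{per}(Y\times Z\times\mathcal{T}))$ of Definition~\ref{d1}, this gives $\int_{\Omega_T}u_\varepsilon v_\varepsilon\,\varphi^{\varepsilon}\,dxdt\to\iiiint_{\Omega_T\times Y\times Z\times\mathcal{T}}w_0\,\varphi$. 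On the other hand, the $L^{1}$ convergence of Theorem~\ref{t5}, evaluated on the very same continuous $\varphi$, identifies the limit as $\iiiint_{\Omega_T\times Y\times Z\times\mathcal{T}}u_0v_0\,\varphi$.

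Equating the two limits yields $\iiiint(w_0-u_0v_0)\,\varphi=0$ for every $\varphi\in\mathcal{C}(\overline{\Omega}_T;\mathcal{C}_{per}(Y\times Z\times\mathcal{T}))$. Since these continuous functions are dense in $L^{2}(\Omega_T\times Y\times Z\times\mathcal{T})$ and both $w_0$ and $u_0v_0$ lie in that space, I conclude $w_0=u_0v_0$ almost everywhere. As this limit is independent of the extracted subsequence, the usual subsequence principle promotes the convergence to the whole sequence, giving $u_\varepsilon v_\varepsilon\xrightarrow{w-ms}u_0v_0$ in $L^{2}(\Omega_T)$.

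The step I expect to require the most care is the identification of $w_0$. The weak multi-scale limit in $L^{2}(\Omega_T)$ is a priori characterised only through pairings with the large class $L^{2}(\Omega_T;\mathcal{C}_{per})$, whereas Theorem~\ref{t5} controls the product solely in the weaker $L^{1}$ topology, that is, against globally continuous test functions. Reconciling the two hinges on the facts that $w_0$ and $u_0v_0$ are both genuine $L^{2}$ functions on $\Omega_T\times Y\times Z\times\mathcal{T}$ and that the continuous test functions are $L^{2}$-dense there, so that agreement on the smaller class forces equality in $L^{2}$. The a priori bound and the subsequence argument are routine by comparison.
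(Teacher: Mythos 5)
The paper does not actually prove this corollary: Theorem~\ref{t5} and Corollary~\ref{c2} are both \emph{recalled} from the reference \cite{SW11}, so there is no in-paper argument to compare against. Your proof is nevertheless correct and is the standard way to upgrade the $L^{1}$ weak--strong convergence to $L^{2}$: the uniform $L^{2}$ bound on $u_\varepsilon v_\varepsilon$ (weak multi-scale convergence of $u_\varepsilon$ gives weak $L^{2}$ convergence against test functions of $(x,t)$ alone, hence boundedness by Banach--Steinhaus, and the $L^{\infty}$ bound on $v_\varepsilon$ does the rest), extraction of a weak multi-scale limit $w_0$ via Theorem~\ref{t2}, identification $w_0=u_0v_0$ by testing both limits against $\mathcal{C}(\overline{\Omega}_T;\mathcal{C}_{per}(Y\times Z\times\mathcal{T}))$ and using density in $L^{2}(\Omega_T\times Y\times Z\times\mathcal{T})$, and the subsequence principle. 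The one point resting on an unstated convention is the meaning of weak multi-scale convergence in $L^{1}(\Omega_T)$, which the paper never defines; your reading (convergence of the integrals against continuous, hence bounded, admissible test functions) is the one used in the cited source, and it is exactly what makes the identification step legitimate, since that class sits inside both the $L^{2}$ test class $L^{2}(\Omega_T;\mathcal{C}_{per}(Y\times Z\times\mathcal{T}))$ and the $L^{1}$ test class. An equivalent, slightly more direct alternative would be a three-term approximation argument (approximate a general $\varphi\in L^{2}(\Omega_T;\mathcal{C}_{per})$ by continuous test functions and use the uniform $L^{2}$ bound on the products), but your subsequence route is cleaner and fully rigorous.
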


\subsection{Preliminary convergence results}

We start this subsection by studying the limiting behavior of the sequence $%
(\chi _{\Omega ^{\varepsilon }})_{\varepsilon >0}$ as $\varepsilon
\rightarrow 0$. To do this, we first express the characteristic function of $%
\Omega ^{\varepsilon }$ in $\Omega $, in terms of those of $Y_{m}$ and $%
Z_{s} $. Denoting by $\chi _{c}^{\varepsilon }$ and $\chi _{p}^{\varepsilon
} $ the characteristic functions of $\Omega _{c}^{\varepsilon }$ and $\Omega
_{p}^{\varepsilon }$, respectively, it appears that 
\begin{eqnarray*}
\chi _{c}^{\varepsilon }(x) &=&\chi _{G_{c}}\left( \frac{x}{\varepsilon }%
\right) =\chi _{Y_{c}}\left( \frac{x}{\varepsilon }\right) \qquad \qquad (%
\text{ by }Y\text{-periodicity)} \\
\chi _{p}^{\varepsilon }(x) &=&\left( 1-\chi _{G_{c}}\left( \frac{x}{%
\varepsilon }\right) \right) \chi _{G_{p}}\left( \frac{x}{\varepsilon ^{2}}%
\right) \\
&=&\left( 1-\chi _{Y_{c}}\left( \frac{x}{\varepsilon }\right) \right) \chi
_{Z_{p}}\left( \frac{x}{\varepsilon ^{2}}\right) \qquad \qquad \text{ (by }Y%
\text{ and Z-periodicity)}.
\end{eqnarray*}%
Hence 
\begin{eqnarray*}
\chi _{\Omega ^{\varepsilon }}(x) &=&1-(\chi _{c}^{\varepsilon }(x)+\chi
_{p}^{\varepsilon }(x))\qquad \qquad \qquad \qquad \qquad (x\in \Omega ) \\
&=&1-\left[ \chi _{Y_{c}}\left( \frac{x}{\varepsilon }\right) +\left( 1-\chi
_{Y_{c}}\left( \frac{x}{\varepsilon }\right) \right) \chi _{Z_{p}}\left( 
\frac{x}{\varepsilon ^{2}}\right) \right] \\
&=&\left( 1-\chi _{Y_{c}}\left( \frac{x}{\varepsilon }\right) \right) \left(
1-\chi _{Z_{p}}\left( \frac{x}{\varepsilon ^{2}}\right) \right) \\
&=&\chi _{Y_{m}}\left( \frac{x}{\varepsilon }\right) \chi _{Z_{s}}\left( 
\frac{x}{\varepsilon ^{2}}\right) .
\end{eqnarray*}%
But, $\chi _{Y_{m}}(\cdot )\otimes \chi _{Z_{s}}(\cdot )\in L_{per}^{\infty
}(Y\times Z)$ so that according to $(iii)$ of Remark~\ref{r2}, 
\begin{equation*}
\chi _{\Omega ^{\varepsilon }}\xrightarrow{w-ms}\chi _{Y_{m}}\otimes \chi
_{Z_{s}}\quad \text{ in }\quad L^{2}(\Omega _{T}),
\end{equation*}%
where the tensor product $\chi _{Y_{m}}\otimes \chi _{Z_{s}}$ is defined by $%
\left( \chi _{Y_{m}}\otimes \chi _{Z_{s}}\right) (y,z)=\chi _{Y_{m}}(y)\chi
_{Z_{s}}(z),\ \ \ ((y,z)\in \mathbb{R}^{N}\times \mathbb{R}^{N}$). We have
proved the following result.

\begin{proposition}
\label{p2} As $\varepsilon\to 0$, the characteristic function $%
\chi_{\Omega^\varepsilon}$ of $\Omega^\varepsilon$ multi-scale converges
weakly in $L^2(\Omega_T)$ to $\chi_{Y_{m}}\otimes \chi _{Z_{p}}$.
\end{proposition}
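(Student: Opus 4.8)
The plan is to reduce the proposition to a single pointwise computation, after which the conclusion is read off directly from item (iii) of Remark~\ref{r2}. Concretely, I would first establish the factorization
\[
\chi_{\Omega^\varepsilon}(x)=\chi_{Y_m}\!\left(\frac{x}{\varepsilon}\right)\chi_{Z_p}\!\left(\frac{x}{\varepsilon^2}\right)\qquad(x\in\Omega),
\]
which exhibits $\chi_{\Omega^\varepsilon}$ as the oscillating function associated with the admissible profile $\chi_{Y_m}\otimes\chi_{Z_p}$, and then invoke the multi-scale convergence of such oscillating functions. No genuinely analytic step is involved: everything rests on correctly identifying the microscopic geometry scale by scale.

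To obtain the factorization I would argue by phases, recalling that $\Omega^\varepsilon$ is the solid skeleton of the medium, i.e. the part of $\Omega$ left after removing the crack phase $\Omega_c^\varepsilon$ and the pore phase $\Omega_p^\varepsilon$. The cracks form an $\varepsilon$-periodic set, so by $Y$-periodicity $\chi_{\Omega_c^\varepsilon}(x)=\chi_{Y_c}(x/\varepsilon)$; the pores live inside the matrix and oscillate at the finer scale $\varepsilon^2$, the relevant fine-cell phase being the pore set $Z_s$, so that by $Y$- and $Z$-periodicity $\chi_{\Omega_p^\varepsilon}(x)=\chi_{Y_m}(x/\varepsilon)\chi_{Z_s}(x/\varepsilon^2)$. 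Since the three phases are disjoint and exhaust $\Omega$,
\[
\chi_{\Omega^\varepsilon}(x)=1-\chi_{Y_c}\!\left(\frac{x}{\varepsilon}\right)-\chi_{Y_m}\!\left(\frac{x}{\varepsilon}\right)\chi_{Z_s}\!\left(\frac{x}{\varepsilon^2}\right).
\]
Replacing $1-\chi_{Y_c}(x/\varepsilon)$ by $\chi_{Y_m}(x/\varepsilon)$ (valid a.e. because $\chi_{Y_c}+\chi_{Y_m}=1$ a.e. on $Y$) and factoring gives $\chi_{Y_m}(x/\varepsilon)\bigl(1-\chi_{Z_s}(x/\varepsilon^2)\bigr)$; finally, since the solid part $Z_p$ and the pore part $Z_s$ are the two complementary phases of the fine cell, $\chi_{Z_s}+\chi_{Z_p}=1$ a.e., so $1-\chi_{Z_s}(x/\varepsilon^2)=\chi_{Z_p}(x/\varepsilon^2)$ and the desired identity follows.

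It then remains to pass to the limit, which is immediate. The profile $\chi_{Y_m}\otimes\chi_{Z_p}$, regarded as the function $(x,t,y,z,\tau)\mapsto\chi_{Y_m}(y)\chi_{Z_p}(z)$ that is constant in $(x,t,\tau)$, belongs to $\mathcal{C}(\overline{\Omega}_T;L^\infty_{per}(Y\times Z\times\mathcal{T}))$; its associated oscillating function, evaluated at $(x/\varepsilon,x/\varepsilon^2,t/\varepsilon^2)$, coincides with $\chi_{\Omega^\varepsilon}(x)$ by the factorization just established. Hence item (iii) of Remark~\ref{r2} applies verbatim and yields
\[
\chi_{\Omega^\varepsilon}\xrightarrow{w-ms}\chi_{Y_m}\otimes\chi_{Z_p}\quad\text{in }L^2(\Omega_T),
\]
which is exactly the assertion. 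I expect the only point requiring care to be the bookkeeping in the middle step — attributing each phase to its proper scale and recognising that the fine-scale profile carried by the solid skeleton is the solid indicator $\chi_{Z_p}$, the complement of the pore indicator $\chi_{Z_s}$; once the product identity is in hand, the passage to the limit is purely formal.
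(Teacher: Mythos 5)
Your overall strategy --- remove the crack and pore phases, factor $\chi_{\Omega^\varepsilon}$ as a product of a $Y$-periodic indicator oscillating at scale $\varepsilon$ and a $Z$-periodic indicator oscillating at scale $\varepsilon^2$, and conclude via Remark~\ref{r2}(iii) --- is exactly the paper's. But your bookkeeping of the fine-scale phases is inverted, and the resulting factorization is false under the paper's definitions. The paper sets $G_s=\bigcup_{k\in\mathbb{Z}^N}(k+Z_s)$, $G_p=G_m\setminus\overline{G}_s$ and $\Omega_p^\varepsilon=\Omega\cap\varepsilon^2G_p$: the phase removed at scale $\varepsilon^2$ is therefore the $Z_p$-phase (the pores), while the retained solid skeleton carries the $Z_s$-phase. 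Hence $\chi_{\Omega_p^\varepsilon}(x)=\bigl(1-\chi_{Y_c}(x/\varepsilon)\bigr)\chi_{Z_p}(x/\varepsilon^2)$, not $\chi_{Y_m}(x/\varepsilon)\chi_{Z_s}(x/\varepsilon^2)$ as you claim, and the correct identity is $\chi_{\Omega^\varepsilon}(x)=\chi_{Y_m}(x/\varepsilon)\chi_{Z_s}(x/\varepsilon^2)$; your formula $\chi_{Y_m}(x/\varepsilon)\chi_{Z_p}(x/\varepsilon^2)$ vanishes precisely on the solid skeleton, where $\chi_{\Omega^\varepsilon}=1$. Consequently the multi-scale limit is $\chi_{Y_m}\otimes\chi_{Z_s}$, which is what the paper's own proof displays, and what the remainder of the paper depends on: the weak-$*$ limit $\rho^\varepsilon\chi_{\Omega^\varepsilon}\to\rho\,\chi_{Y_m}\otimes\chi_{Z_s}$ invoked before Theorem~\ref{t6}, the factor $|Z_s|$ throughout Section~\ref{S4}, and the cell problem (\ref{eq50}) posed in $H^1_{\#}(Z_s)$.

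In fairness, you were misled by two misprints in the paper itself: the sentence introducing $Y_m=Z_s\cup\overline{Z}_p$ pairs the names the wrong way around (``respectively'' attaches the solid to $Z_p$ and the pores to $Z_s$), and the statement of Proposition~\ref{p2} prints $\chi_{Z_p}$ even though its own proof ends with $\chi_{Y_m}\otimes\chi_{Z_s}$. Your argument faithfully reproduces the misprinted statement, but only by asserting a pointwise identity that the definitions of $G_s$, $G_p$ and $\Omega_p^\varepsilon$ refute; the requirement that $Z_s$ (not $Z_p$) have a Lipschitz boundary, needed for the extension theory of Proposition~\ref{p1}, is further confirmation that $Z_s$ is the retained solid phase. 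With $Z_s$ and $Z_p$ interchanged in your middle step and the limit corrected to $\chi_{Y_m}\otimes\chi_{Z_s}$, your proof coincides with the paper's: the a.e.\ partitions $\chi_{Y_c}+\chi_{Y_m}=1$ and $\chi_{Z_p}+\chi_{Z_s}=1$, the factorization, and the appeal to Remark~\ref{r2}(iii) with the profile $\chi_{Y_m}\otimes\chi_{Z_s}\in L_{per}^{\infty}(Y\times Z)$ are exactly its steps.
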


We now proceed and recall properties of some functional spaces we will use.
The topological dual of $H^1_\rho(Y_m)$ is denoted in the sequel by $%
(H^1_\rho(Y_m))^{\prime }$ while $L^2_\rho(Y_m)$ stands for the space of
functions $u\in L^2_{per}(Y)$ satisfying $\int_{Y_m}\rho(y)u(y)\,dy=0$. We
first recall that, since the space $H^1_\rho(Y_m)$ is densely embedded in $%
L^2_\rho(Y_m)$, the following continuous embeddings hold: 
\begin{equation*}
H^1_\rho(Y_m)\subset L^2_\rho(Y_m)\subset (H^1_\rho(Y_m))^{\prime }.
\end{equation*}
We also recall that the topological dual of $L^2(\mathcal{T};H^1_\rho(Y_m))$
is $L^2(\mathcal{T};(H^1_\rho(Y_m))^{\prime })$. This readily follows from
the reflexiveness of the space $H^1_\rho(Y_m)$. We denote the duality
pairing between $H^1_\rho(Y_m)$ and $(H^1_\rho(Y_m))^{\prime }$ by $%
(\cdot,\cdot)$, and that of $L^2(\mathcal{T};H^1_\rho(Y_m))$ and $L^2(%
\mathcal{T};(H^1_\rho(Y_m))^{\prime })$ by $[\cdot,\cdot]$. Thus, we have 
\begin{equation*}
(u,v)=\int_{Y}u(y)v(y)\,dy
\end{equation*}
for $u\in L^2_\rho(Y_m)$ and $v\in H^1_\rho(Y_m)$, and 
\begin{equation*}
[u,v]=\int_0^1(u(\tau),v(\tau))\,d\tau=\int_0^1\int_{Y}u(y,\tau)v(y,\tau)%
\,dy\,d\tau
\end{equation*}
for $u\in L^2(\mathcal{T};H^1_\rho(Y_m))$ and $v\in L^2(\mathcal{T}%
;(H^1_\rho(Y_m))^{\prime })$. Furthermore, let $\mathcal{D}_\rho(Y_m)$
stands for the space of functions $u\in \mathcal{D}_{per}(Y)$ with $%
\int_{Y_m}\rho(y)u(y)\,dy=0$. Owing to the fact that the space $\mathcal{D}%
_\rho(Y_m)$ is dense in $H^1_\rho(Y_m)$ the following result holds (see e.g. 
\cite[Lemma 2 and Lemma 3]{woukengaa}).

\begin{theorem}
Let $u\in \mathcal{D}_{per}^{\prime }(Y\times \mathcal{T})$ and assume that $%
u$ is continuous on $\mathcal{D}_{\rho }(Y_{m})\otimes \mathcal{D}_{per}(%
\mathcal{T})$ endowed with the $L_{per}^{2}(\mathcal{T};H_{\rho
}^{1}(Y_{m})) $-norm. Then $u\in L_{per}^{2}(\mathcal{T};(H_{\rho
}^{1}(Y_{m}))^{\prime })$, and further 
\begin{equation*}
\langle u,\varphi \rangle =\int_{0}^{1}(u(\tau ),\varphi (\cdot ,\tau
))\,d\tau
\end{equation*}%
for all $\varphi \in \mathcal{D}_{\rho }(Y_{m})\otimes \mathcal{D}_{per}(%
\mathcal{T})$, where $\langle \cdot ,\cdot \rangle $ denotes the duality
pairing between $\mathcal{D}_{per}^{\prime }(Y\times \mathcal{T})$ and $%
\mathcal{D}_{per}(Y\times \mathcal{T})$, whereas the right-hand side is the
product of $u$ and $\varphi $ in the duality between $L_{per}^{2}(\mathcal{T}%
;(H_{\rho }^{1}(Y_{m}))^{\prime })$ and $L_{per}^{2}(\mathcal{T};H_{\rho
}^{1}(Y_{m}))$.
\end{theorem}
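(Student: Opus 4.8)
The plan is to view $u$, restricted to the dense test space $\mathcal{D}_{\rho}(Y_m)\otimes\mathcal{D}_{per}(\mathcal{T})$, as a bounded linear functional for the $L^2_{per}(\mathcal{T};H^1_\rho(Y_m))$-norm, to extend it by continuity to the whole Bochner space, and then to represent the extension through the duality
\[
\bigl(L^2_{per}(\mathcal{T};H^1_\rho(Y_m))\bigr)'=L^2_{per}(\mathcal{T};(H^1_\rho(Y_m))')
\]
recalled just before the statement. The desired integral formula will then be read off from the pairing $[\cdot,\cdot]$, and comparison on the dense subspace will identify the representing element with $u$ itself.

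First I would record that $\mathcal{D}_{\rho}(Y_m)\otimes\mathcal{D}_{per}(\mathcal{T})$ is dense in $L^2_{per}(\mathcal{T};H^1_\rho(Y_m))$: this combines the density of $\mathcal{D}_{\rho}(Y_m)$ in $H^1_\rho(Y_m)$ (recalled above the theorem) and of $\mathcal{D}_{per}(\mathcal{T})$ in $L^2_{per}(\mathcal{T})$ with the standard fact that the algebraic tensor product of dense subspaces is dense in the associated Bochner space — one approximates an $H^1_\rho(Y_m)$-valued map by simple functions in $\tau$ and then each coefficient by an element of $\mathcal{D}_{\rho}(Y_m)$. By the continuity hypothesis the map $\varphi\mapsto\langle u,\varphi\rangle$ is bounded on this dense subspace, so the bounded linear extension theorem furnishes a unique bounded functional $\widetilde{u}$ on all of $L^2_{per}(\mathcal{T};H^1_\rho(Y_m))$. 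Since $H^1_\rho(Y_m)$ is a closed subspace of $H^1_{per}(Y)$, hence reflexive, the dual of $L^2_{per}(\mathcal{T};H^1_\rho(Y_m))$ is exactly $L^2_{per}(\mathcal{T};(H^1_\rho(Y_m))')$, so there is a unique $U\in L^2_{per}(\mathcal{T};(H^1_\rho(Y_m))')$ with
\[
\widetilde{u}(\varphi)=\int_{0}^{1}(U(\tau),\varphi(\cdot,\tau))\,d\tau\qquad\text{for all }\varphi\in L^2_{per}(\mathcal{T};H^1_\rho(Y_m)).
\]
Restricting to $\varphi\in\mathcal{D}_{\rho}(Y_m)\otimes\mathcal{D}_{per}(\mathcal{T})$, where $\widetilde{u}$ coincides with $u$ by construction, yields $\langle u,\varphi\rangle=\int_{0}^{1}(U(\tau),\varphi(\cdot,\tau))\,d\tau$; identifying $u$ with $U$ gives both the membership $u\in L^2_{per}(\mathcal{T};(H^1_\rho(Y_m))')$ and the claimed representation.

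The only genuinely delicate point is the density of the tensor product in the Bochner space together with the dual identification: one must ensure that it is the algebraic tensor product $\mathcal{D}_{\rho}(Y_m)\otimes\mathcal{D}_{per}(\mathcal{T})$ — not merely its closure — on which $u$ is controlled, so that the extension is unique and the representing element is forced to agree with $u$ in the distributional sense. Because the separability and reflexivity of $H^1_\rho(Y_m)$ are already at hand, the Riesz-type dual identification is immediate and the remaining verification is the routine simple-function approximation sketched above. Thus the proof reduces to assembling these standard functional-analytic facts, exactly as in \cite{woukengaa}.
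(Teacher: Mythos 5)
Your argument is correct and is essentially the intended one: the paper itself gives no proof of this theorem, deferring to Lemmas 2 and 3 of the Svanstedt--Woukeng reference, and those lemmas rest on exactly the chain you assemble --- density of $\mathcal{D}_{\rho }(Y_{m})\otimes \mathcal{D}_{per}(\mathcal{T})$ in $L_{per}^{2}(\mathcal{T};H_{\rho }^{1}(Y_{m}))$, extension of $u$ by continuity, and the dual identification $\bigl(L_{per}^{2}(\mathcal{T};H_{\rho }^{1}(Y_{m}))\bigr)'=L_{per}^{2}(\mathcal{T};(H_{\rho }^{1}(Y_{m}))')$ via reflexivity, which the paper records just before the statement. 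Your closing caveat is also well placed: the identification of the representing element with $u$ is only an identification of the restriction of $u$ to the subspace $\mathcal{D}_{\rho }(Y_{m})\otimes \mathcal{D}_{per}(\mathcal{T})$ of $\mathcal{D}_{per}(Y\times \mathcal{T})$, which is precisely the (slightly abusive) sense in which the theorem asserts $u\in L_{per}^{2}(\mathcal{T};(H_{\rho }^{1}(Y_{m}))')$.
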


We now define an operator 
\begin{eqnarray}  \label{eq261}
R:L^2_{per}(Y)&\to & L^2_{per}(Y)  \notag \\
u & \mapsto &\chi_{Y_m} \rho u.
\end{eqnarray}
It is clear that $R$ is a non-negative and linear bounded self-adjoint
operator. Using the positivity of the weight $\rho$ we prove that the kernel
of $R$ is defined by: 
\begin{equation*}
Ker(R)= \{u\in L^2_{per}(Y) : u=0 \ \ \text{a.e. in }\ Y_m\}.
\end{equation*}
We denote by $Ker(R)^{\bot}$ the orthogonal of the kernel of $R$ in $%
L^2_{per}(Y)$ while $L^2_0(Y_m)$ stands for the completion of $Ker(R)^\bot $
with respect to the norm 
\begin{equation*}
\|u\|_+ = \|\chi_{Y_m}\rho^\frac{1}{2}u\|_{L^2_{per}(Y)}\quad\qquad (u\in
Ker(R)^{\bot}).
\end{equation*}
We denote by $P$ the orthogonal projection from $L^2_{per}(Y)$ onto $%
L^2_0(Y_m)$. We recall that for $u\in L^2_{per}(\mathcal{T};L^2_{per}(Y))$
we define $Ru$ and $Pu$ by 
\begin{equation*}
(Ru)(\tau)=R(u(\tau))\ \ \text{and }\ (Pu)(\tau)=P(u(\tau)) \ \ \ \text{for
a.e. } \tau\in (0,1).
\end{equation*}
Considered as an unbounded operator on $L^2_{per}(\mathcal{T};H^1_\rho(Y_m))$%
, the domain of $R^{\prime }\equiv\chi_{Y_m}\rho\frac{\partial}{\partial \tau%
}$ is 
\begin{equation*}
\mathcal{W}=\left\{u\in L^2_{per}(\mathcal{T};H^1_\rho(Y_m)) : \chi_{Y_m}\rho%
\frac{\partial u}{\partial \tau}\in L^2_{per}(\mathcal{T};(H^1_\rho(Y_m))^{%
\prime }) \right\}.
\end{equation*}
We endow $\mathcal{W}$ with its natural norm 
\begin{equation*}
\|u\|_\mathcal{W}=\|u\|_{L^2_{per}(\mathcal{T};H^1_\rho(Y_m)} +
\left\|\chi_{Y_m}\rho\frac{\partial u}{\partial \tau} \right\|_{L^2_{per}(%
\mathcal{T};(H^1_\rho(Y_m))^{\prime })} \qquad\qquad (u\in \mathcal{W}),
\end{equation*}
and recall an important result (see e.g., \cite{pankovpankova, paronetto04}
) we will use in the sequel.

\begin{proposition}
\label{p3} The operator $P$ maps continuously $\mathcal{W}$ into $\mathcal{C}%
([0,1];L^2_0(Y_m))$, i.e., there exists a constant $c>0$ such that 
\begin{equation*}
\|Pu\|_{\mathcal{C}([0,1];L^2_0(Y_m))} = \sup_{0\leq \tau\leq
1}\left\|\chi_{Y_m} \rho^{\frac{1}{2}}u(\tau)\right\|_{L^2(Y)}\leq c \|u\|_%
\mathcal{W} \qquad \text{ for all } u\in\mathcal{W}.
\end{equation*}
Moreover, 
\begin{equation}  \label{eq262}
\left[\chi_{Y_m} \rho\frac{\partial u}{\partial \tau},v\right]=-\left[%
\chi_{Y_m} \rho\frac{\partial v}{\partial\tau},u\right]\qquad \text{ for all 
} u,v\in \mathcal{W}.
\end{equation}
\end{proposition}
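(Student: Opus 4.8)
The plan is to regard the triple $H^1_\rho(Y_m)\hookrightarrow L^2_0(Y_m)\hookrightarrow (H^1_\rho(Y_m))'$ as a (degenerate) evolution triple associated with the $\tau$-independent weight $\chi_{Y_m}\rho$, and to read off both assertions from the elementary product rule on the dense smooth class $\mathcal{D}_\rho(Y_m)\otimes\mathcal{D}_{per}(\mathcal{T})$. First I would verify, for $u,v$ in this class, the identity
\[
\frac{d}{d\tau}\int_{Y_m}\rho\,u(\tau)v(\tau)\,dy=\left(\chi_{Y_m}\rho\frac{\partial u}{\partial\tau},v\right)+\left(\chi_{Y_m}\rho\,u,\frac{\partial v}{\partial\tau}\right),
\]
which holds simply because $\chi_{Y_m}$ and $\rho$ do not depend on $\tau$, so one may differentiate under the integral sign over $Y_m$.

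Taking $v=u$ and integrating from $s$ to $\tau$ then shows that $\tau\mapsto\|\chi_{Y_m}\rho^{1/2}u(\tau)\|_{L^2(Y)}^2$ is absolutely continuous and equals $\|\chi_{Y_m}\rho^{1/2}u(s)\|_{L^2(Y)}^2+2\int_s^\tau(\chi_{Y_m}\rho\,\partial_\sigma u,u)\,d\sigma$. Averaging this over $s\in(0,1)$ and bounding the pairing by $\|\chi_{Y_m}\rho\,\partial_\sigma u\|_{(H^1_\rho(Y_m))'}\|u\|_{H^1_\rho(Y_m)}$ yields $\sup_\tau\|\chi_{Y_m}\rho^{1/2}u(\tau)\|_{L^2(Y)}\le c\|u\|_{\mathcal{W}}$; combining the resulting weak continuity of $\tau\mapsto(\chi_{Y_m}\rho\,u(\tau),w)$ with continuity of the weighted norm upgrades this to strong continuity of $\tau\mapsto Pu(\tau)$ in $L^2_0(Y_m)$, giving the first claim. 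For (\ref{eq262}) I would instead integrate the product rule over the full period $(0,1)$: since elements of $\mathcal{W}$ are $\tau$-periodic, the boundary term $\int_{Y_m}\rho\,u\,v\,dy$ evaluated at $\tau=1$ and $\tau=0$ cancels, so the two remaining integrals vanish in sum, which is exactly $[\chi_{Y_m}\rho\,\partial_\tau u,v]=-[\chi_{Y_m}\rho\,\partial_\tau v,u]$.

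Both statements then extend from the smooth class to all of $\mathcal{W}$ by density. I would approximate $u\in\mathcal{W}$ by convolving in $\tau$ with a periodic mollifier (which commutes with $\chi_{Y_m}\rho\,\partial_\tau$ and preserves $\tau$-periodicity) and by invoking the density of $\mathcal{D}_\rho(Y_m)$ in $H^1_\rho(Y_m)$ recalled above, producing approximants converging to $u$ in the $\mathcal{W}$-norm; since the continuity estimate and the bilinear identity (\ref{eq262}) are stable under such limits, they hold for every $u,v\in\mathcal{W}$. The main obstacle is precisely this approximation step: the weight $\chi_{Y_m}\rho$ degenerates (it vanishes on $Y_c$), so $L^2_0(Y_m)$ is only the completion of $\mathrm{Ker}(R)^\perp$ and the triple is not a genuine Gelfand triple; one must therefore check carefully that the regularized functions converge simultaneously in $L^2_{per}(\mathcal{T};H^1_\rho(Y_m))$ and that their weighted time derivatives converge in $L^2_{per}(\mathcal{T};(H^1_\rho(Y_m))')$. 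This degenerate situation is handled exactly as in the evolution-triple theory of \cite{pankovpankova,paronetto04}, to which the statement already refers.
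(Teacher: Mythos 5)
Your argument is correct, but note that the paper does not actually prove Proposition \ref{p3}: it is explicitly \emph{recalled} as a known result, with a pointer to \cite{pankovpankova, paronetto04}, so there is no internal proof to compare against. What you have written is essentially the standard Lions--Magenes evolution-triple argument that those references adapt to a degenerate, $\tau$-independent weight: the product rule on a dense smooth class, $v=u$ plus averaging over the initial time $s$ to get $\sup_{\tau}\|\chi_{Y_m}\rho^{1/2}u(\tau)\|_{L^2(Y)}\leq c\|u\|_{\mathcal{W}}$, periodicity in $\tau$ to kill the boundary term and yield the antisymmetry (\ref{eq262}), and a density/mollification step to pass to all of $\mathcal{W}$. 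All the individual estimates check out (boundedness $\rho\leq\Lambda$ gives $\|\chi_{Y_m}\rho^{1/2}u(s)\|_{L^2(Y)}\leq C\|u(s)\|_{H^1_{\rho}(Y_m)}$, and the duality pairing is controlled by the two halves of the $\mathcal{W}$-norm). You also correctly identify the only genuinely delicate point, namely that $L^2_0(Y_m)$ is a completion of $\mathrm{Ker}(R)^{\perp}$ in a weighted norm rather than a closed subspace of $L^2_{per}(Y)$, so the triple is not a true Gelfand triple and the approximation must be checked to converge simultaneously in $L^2_{per}(\mathcal{T};H^1_{\rho}(Y_m))$ and in $L^2_{per}(\mathcal{T};(H^1_{\rho}(Y_m))')$; since the weight does not depend on $\tau$, periodic mollification in $\tau$ does commute with $\chi_{Y_m}\rho\,\partial_{\tau}$ and achieves this. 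In short: a correct, self-contained reconstruction of a result the paper only cites.
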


\noindent We will also make use of the following convergence results in the
forthcoming homogenization process.

\begin{proposition}
\label{p4} Let $\varphi \in \mathcal{C}_{0}^{\infty }(Q_{T})\otimes \mathcal{%
D}_{per}(\mathcal{T})\otimes \mathcal{D}_{\rho }(Y_{m})$. Let $E^{\prime }$, 
$(u_{\varepsilon })_{\varepsilon \in E}$ and $(u_{0},u_{1})\in
L^{2}(0,T;H_{0}^{1}(\Omega ))\times L^{2}(\Omega _{T};L^{2}(\mathcal{T}%
;H_{\rho }^{1}(Y_{m})))$ be as in Theorem \ref{t4}. Then 
\begin{equation*}
\lim_{E\ni \varepsilon \rightarrow 0}\ \ \frac{1}{\varepsilon }%
\int_{Q_{T}^{\varepsilon }}u_{\varepsilon }(x,t)\rho (\frac{x}{\varepsilon }%
)\varphi (x,t,\frac{x}{\varepsilon },\frac{t}{\varepsilon ^{2}})dx\,dt\ \
=|Z_{s}|\int_{Q_{T}}[\chi _{Y_{m}}\,\rho \,u_{1}(x,t),\varphi (x,t)]dx\,dt,
\end{equation*}%
where $0<|Z_{s}|<1$ denotes the Lebesgue measure of the set $Z_{s}$.
\end{proposition}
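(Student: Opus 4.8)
The plan is to analyze the scaled integral by exploiting the oscillation test function structure of $\varphi$ and the multi-scale convergence of $\partial u_\varepsilon/\partial x_i$ provided by Theorem~\ref{t4}. The key observation is that the factor $1/\varepsilon$ in front of the integral must be absorbed by generating a derivative through an integration by parts. Since $\varphi$ is a tensor product of a smooth compactly supported function on $Q_T$ and periodic functions in $(y,\tau)$, and since the oscillating argument is $\varphi(x,t,x/\varepsilon,t/\varepsilon^2)$, applying $\nabla_x$ to the composite produces a leading term $\tfrac{1}{\varepsilon}(\nabla_y\varphi)(x,t,x/\varepsilon,t/\varepsilon^2)$. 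So the natural first step is to write $\chi_{\Omega^\varepsilon}\rho^\varepsilon u_\varepsilon$ against a suitable vector test function whose $x$-divergence reproduces the desired $1/\varepsilon$-scaled integrand up to lower-order corrections.

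First I would choose an auxiliary vector field. Because the target integrand involves $[\chi_{Y_m}\rho\,u_1,\varphi]$, i.e.\ a duality pairing in $y$ against $\varphi\in\mathcal{D}_\rho(Y_m)$, the right device is to pick $\Psi(x,t,y,\tau)$ periodic in $(y,\tau)$ with $\Delta_y\Psi = \chi_{Y_m}\rho\,\varphi$ on $Y$ (solvable after checking the compatibility/zero-mean condition using $\varphi\in\mathcal{D}_\rho(Y_m)$, which encodes $\int_{Y_m}\rho\varphi\,dy=0$), so that $\nabla_y\cdot(\nabla_y\Psi)$ recovers the weight. Then I would integrate by parts in $x$ to transfer the $\nabla_x$ onto $u_\varepsilon$: the boundary terms vanish because $\varphi\in\mathcal{C}_0^\infty(Q_T)$ has compact support away from $\partial\Omega$, and the interior term becomes
\begin{equation*}
-\int_{Q_T^\varepsilon}\nabla u_\varepsilon\cdot (\nabla_y\Psi)\left(x,t,\tfrac{x}{\varepsilon},\tfrac{t}{\varepsilon^2}\right)dx\,dt
\end{equation*}
up to correction terms that carry an explicit factor of $\varepsilon$ (coming from $\nabla_x$ hitting the slow variables of $\Psi$) and are therefore negligible by the uniform gradient bound \eqref{eq12}.

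Next I would pass to the limit. The oscillating function $(\nabla_y\Psi)(x,t,x/\varepsilon,t/\varepsilon^2)$ strongly multi-scale converges to $\nabla_y\Psi(x,t,y,\tau)$ by Remark~\ref{r2}(i), while $\chi_{\Omega^\varepsilon}\nabla u_\varepsilon$ weakly multi-scale converges: using Proposition~\ref{p2} for the characteristic function together with the gradient convergence \eqref{eq26}, the product $\chi_{\Omega^\varepsilon}\partial u_\varepsilon/\partial x_i$ converges weakly multi-scale to $\chi_{Y_m}\chi_{Z_s}(\partial u_0/\partial x_i+\partial u_1/\partial y_i+\partial u_2/\partial z_i)$. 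Applying the weak-strong product theorem (Theorem~\ref{t5}, or Corollary~\ref{c2}), the integral converges to an average over $\Omega_T\times Y\times Z\times\mathcal{T}$ of $-(\text{full gradient})\cdot\nabla_y\Psi\,\chi_{Y_m}\chi_{Z_s}$. Integrating in $z$ yields the factor $|Z_s|$, and integrating by parts once more in $y$ (moving $\nabla_y$ off $\Psi$ and onto the gradient sum, then using $\Delta_y\Psi=\chi_{Y_m}\rho\varphi$ and periodicity) collapses the expression to the duality pairing $[\chi_{Y_m}\rho\,u_1,\varphi]$; the $u_0$ and $u_2$ contributions should drop because $\nabla_y$ acts trivially on $u_0$ and the $z$-integration decouples $u_2$ from the $y$-pairing.

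**The hard part will be** bookkeeping the two successive integrations by parts so that only the $u_1$-term survives while the $u_0$ and $u_2$ terms vanish, and verifying that the auxiliary field $\Psi$ has enough regularity and satisfies the correct boundary/zero-mean compatibility so that $\Delta_y\Psi=\chi_{Y_m}\rho\varphi$ is solvable within the periodic framework adapted to $Y_m$. In particular one must be careful that the solvability condition matches exactly the defining constraint $\int_{Y_m}\rho\varphi\,dy=0$ of $\mathcal{D}_\rho(Y_m)$, which is precisely why the statement restricts $\varphi$ to that space; this is the step I would check most carefully. The remaining estimates — that the $\varepsilon$-order correction terms are genuinely negligible under \eqref{eq11}–\eqref{eq12} — are routine applications of Cauchy–Schwarz and the uniform a priori bounds.
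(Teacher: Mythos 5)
Your overall strategy --- solving $\Delta_y\Psi=\chi_{Y_m}\rho\,\varphi$ on the periodicity cell (solvable precisely because $\varphi\in\mathcal{D}_{\rho}(Y_m)$ forces $\int_Y\chi_{Y_m}\rho\,\varphi\,dy=0$), writing $\tfrac1\varepsilon(\chi_{Y_m}\rho\varphi)^{\varepsilon}$ as $\mathrm{div}_x\bigl[(\nabla_y\Psi)^{\varepsilon}\bigr]$ up to a bounded remainder with vanishing $y$-mean, integrating by parts, and passing to the multi-scale limit of the gradients via (\ref{eq26}) --- is exactly the ``line of reasoning'' the paper invokes: its own proof merely rewrites the integral as $\tfrac1\varepsilon\int_{Q_T}u_{\varepsilon}\,\chi_{Y_m}(\tfrac{x}{\varepsilon})\chi_{Z_s}(\tfrac{x}{\varepsilon^2})\rho(\tfrac{x}{\varepsilon})\varphi^{\varepsilon}$, records the zero-mean identity, and defers to \cite[Theorem 2.3]{douanlaaa}, whose proof is the potential argument you describe.

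There are, however, two concrete gaps in your plan. First, you integrate by parts over $Q_T^{\varepsilon}$ and dismiss the boundary terms on the grounds that $\varphi\in\mathcal{C}_0^{\infty}(Q_T)$. That only kills the contribution of $\partial\Omega$; the boundary of the perforated domain also contains the boundaries of all the cracks and pores interior to $\Omega$, on which neither $u_{\varepsilon}$ nor $\varphi$ nor $\nabla_y\Psi\cdot\nu$ vanishes, and whose total surface measure blows up as $\varepsilon\to0$. The cited argument avoids this by working on the whole of $\Omega$ with the extension $P_{\varepsilon}u_{\varepsilon}$ and by building the characteristic function $\chi_{Y_m}$ into the right-hand side of the cell problem for $\Psi$ (which you correctly do), so that the non-smooth factor is replaced by the $H^1$ field $\nabla_y\Psi$ \emph{before} any $x$-derivative is taken. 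Second, even on the whole domain the factor $\chi_{Z_s}(x/\varepsilon^2)$, oscillating at the finer scale $\varepsilon^2$, still sits between $\mathrm{div}_x[(\nabla_y\Psi)^{\varepsilon}]$ and $P_{\varepsilon}u_{\varepsilon}$ and obstructs the naive integration by parts; your remark that ``the $u_0$ and $u_2$ contributions should drop'' conceals the real difficulty here: testing the limit gradient against $\chi_{Z_s}(z)\,\nabla_y\Psi(y,\tau)$ leaves the term $\int_{Z_s}\nabla_z u_2\,dz$, which does \emph{not} vanish by periodicity alone ($u_2$ is $Z$-periodic, not supported so that its $Z_s$-average of gradients is zero), so the decoupling of the $z$-scale --- which is what ultimately produces the clean factor $|Z_s|$ --- must be argued rather than asserted. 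The remaining steps of your plan (solvability of the cell problem for $\Psi$, negligibility of the $O(\varepsilon)$ remainders via (\ref{eq11})--(\ref{eq12}), and the final $y$-integration by parts yielding $[\chi_{Y_m}\rho\,u_1,\varphi]$) are sound and consistent with the paper.
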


\begin{proof}
We have 
\begin{eqnarray}
\frac{1}{\varepsilon}\int_{Q^\varepsilon_T}u_\varepsilon(x,t)\rho(\frac{x}{%
\varepsilon})\varphi(x,t,\frac{x}{\varepsilon},\frac{t}{\varepsilon^2})dx\,
dt &= &\frac{1}{\varepsilon}\int_{Q_T}u_\varepsilon(x,t)\chi_{\Omega^%
\varepsilon}(x)\rho(\frac{x}{\varepsilon})\varphi(x,t,\frac{x}{\varepsilon},%
\frac{t}{\varepsilon^2})dx\, dt  \notag \\
&=& \frac{1}{\varepsilon}\int_{Q_T}u_\varepsilon(x,t)\chi_{Y_m}(\frac{x}{%
\varepsilon})\chi_{Z_s}(\frac{x}{\varepsilon^2})\rho(\frac{x}{\varepsilon}%
)\varphi(x,t,\frac{x}{\varepsilon},\frac{t}{\varepsilon^2})dx\,dt \qquad 
\notag
\end{eqnarray}
Bearing in mind that 
\begin{equation*}
\int_{Y}\chi_{Y_m}(y)\rho(y)\varphi(x,t,y,\tau)dy = 0\ \ \text{ for all }\ \
(x,t,\tau)\in\mathbb{R}^N\times\mathbb{R}\times\mathbb{R}),
\end{equation*}
the same line of reasoning as in the proof of \cite[Theorem 2.3]{douanlaaa}
yields, as $E\ni\varepsilon\to 0$, 
\begin{eqnarray}
&&\frac{1}{\varepsilon}\int_{Q_T}u_\varepsilon(x,t)\chi_{Y_m}(\frac{x}{%
\varepsilon})\chi_{Z_s}(\frac{x}{\varepsilon^2})\rho(\frac{x}{\varepsilon}%
)\varphi(x,t,\frac{x}{\varepsilon},\frac{t}{\varepsilon^2})\,dxdt  \notag \\
& & \qquad \qquad\qquad\qquad\qquad\qquad\qquad\to \int_{Q_T\times Y\times
Z\times \mathcal{T}}\chi_{Y_m}\chi_{Z_s}\rho u_1\varphi\, dxdtdydzd\tau, 
\notag
\end{eqnarray}
which concludes the proof.
\end{proof}

We finally introduce the following notation-definition 
\begin{equation*}
\mathbb{F}_{0}^{1}=L^{2}(0,T;H_{0}^{1}(\Omega ))\times L^{2}(\Omega
_{T};L_{per}^{2}(\mathcal{T};H_{\rho }^{1}(Y_{m})))\times L^{2}(\Omega
_{T};L_{per}^{2}(Y\times \mathcal{T};H_{per}^{1}(Z_{s}))),
\end{equation*}%
where $H_{per}^{1}(Z_{s})$ stands for the space of functions $u\in
H_{per}^{1}(Z)$ with $\int_{Z_{s}}u(z)dz=0$. We similarly define $\mathcal{D}%
_{per}(Z_{s})$ and remark that the space $\mathbb{F}_{0}^{1}$ admits the
following dense subspace 
\begin{equation*}
\mathfrak{F}_{0}^{\infty }=\mathcal{C}_{0}^{\infty }(\Omega _{T})\times
\left( \mathcal{C}_{0}^{\infty }(\Omega _{T})\otimes \mathcal{D}_{per}(%
\mathcal{T})\otimes \mathcal{D}_{\rho }(Y_{m})\right) \times \left( \mathcal{%
C}_{0}^{\infty }(\Omega _{T})\otimes \mathcal{D}_{per}(Y\times \mathcal{T}%
)\otimes \mathcal{D}_{per}(Z_{s})\right) .
\end{equation*}%
Moreover, $\mathbb{F}_{0}^{1}$ is a Banach space under the norm

\begin{equation*}
\|(u_0,u_1,u_2)\|_{\mathbb{F}^1_0}=\|u_0\|_{L^2(0,T;H^1_0(\Omega))}+\|u_1%
\|_{L^2(\Omega_T;L^2(\mathcal{T};
H^1_\rho(Y_m)))}+\|u_2\|_{L^2(\Omega_T;L^2(Y\times\mathcal{T};
H^1_{per}(Z)))}.
\end{equation*}

\section{Homogenization process and main results\label{S4}}

Let $E$ be an ordinary sequence of real numbers $\varepsilon $ converging to
zero with $\varepsilon $.

\subsection{Derivation of the global limit problem\label{s4.1}}

Owing to Corollary~\ref{c1} and Theorem~\ref{t4}, there exist 
\begin{equation}  \label{eq28}
(u_0, u_1, u_2)\in \mathbb{F}^1_0
\end{equation}
and a subsequence $E^{\prime }$ of $E$ such that, as $E^{\prime
}\ni\varepsilon\to 0$, 
\begin{eqnarray}
P_\varepsilon u_\varepsilon & \to & u_0 \qquad \text{ in }\quad
L^2(0,T;H^1_0(\Omega))\text{-weak},  \notag \\
\notag \\
\frac{\partial (P_\varepsilon u_\varepsilon)}{\partial x_i} & %
\xrightarrow{w-ms} & \frac{\partial u_0}{ \partial x_i}+\frac{\partial u_1}{%
\partial y_i}+\frac{\partial u_2}{\partial z_i} \quad \text{ in }\quad
L^2(\Omega_T)\quad (1\leq j\leq N).  \notag
\end{eqnarray}
Moreover, as $E^{\prime }\ni\varepsilon\to 0$ we have 
\begin{equation*}
\rho^\varepsilon\chi_{\Omega^\varepsilon}\to \rho\chi_{Y_{m}}\otimes
\chi_{Z_{s}}\ \ \ \ \ \ \text{in }\ \ L^\infty(\Omega) \text{ weak-}*
\end{equation*}
so that Theorem \ref{t1} yields 
\begin{equation}  \label{eq281}
P_\varepsilon u_\varepsilon \to u_0 \qquad \text{ in }\quad
L^2(0,T;L^2(\Omega)).
\end{equation}
We are now in a position to formulate the first homogenization result.

\begin{theorem}
\label{t6} The triple $(u_{0},u_{1},u_{2})\in \mathbb{F}_{0}^{1}$ determined
above by \emph{(\ref{eq28})} is a solution to the following variational
problem:

\begin{equation}  \label{eq29}
\left\{ \begin{aligned} &(u_0, u_1, u_2)\in \mathbb{F}^1_0, \\&
\left(\int_{Y_m}\rho(y)dy\right) \int_{\Omega_T}\frac{\partial u_0}{\partial
t}\psi_0\,dxdt- \int_{\Omega_T}\left[\rho\chi_{Y_m}u_1(x,t),\frac{\partial
\psi_1}{\partial \tau}(x,t)\right]\,dxdt \\& \qquad\qquad = -
\iiint\limits_{\Omega_T\times Y_m\times \mathcal{T}}
G(y,\tau,u_0)\cdot\nabla \psi_0\,dxdtdyd\tau + \iiint\limits_{\Omega_T\times
Y_m\times \mathcal{T}}g(y,\tau,u_0)\psi_1\,dxdtdyd\tau\\& - \frac{1}{|Z_s|}\
\ \  \iiiint\limits_{\Omega_T\times Y_m\times
Z_s\times\mathcal{T}}A(y,\tau)(\nabla_x u_0 + \nabla_y u_1 + \nabla_z
u_2)\cdot(\nabla_x \psi_0 + \nabla_y \psi_1 + \nabla_z
\psi_2)\,dxdtdydzd\tau \\& - \frac{1}{|Z_s|}\ \ \
\iiiint\limits_{\Omega_T\times Y_m\times
Z_s\times\mathcal{T}}\left(\partial_r G(y,\tau,u_0)\cdot(\nabla_x u_0 +
\nabla_y u_1 + \nabla_z u_2)\right)\psi_0\,dxdtdydzd\tau \\& \text{ for all
} \ \ (\psi_0,\psi_1,\psi_2)\in \mathfrak{F}^\infty_0. \end{aligned} \right.
\end{equation}
\end{theorem}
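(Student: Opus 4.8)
The plan is to derive the variational problem (\ref{eq29}) by testing the $\varepsilon$-problem (\ref{eq1}) against a suitable oscillating test function and passing to the limit term by term using the multi-scale convergence machinery assembled in Section~\ref{S3}. Concretely, I would fix $(\psi_0,\psi_1,\psi_2)\in\mathfrak{F}_0^\infty$ and build the test function
\begin{equation*}
\Phi_\varepsilon(x,t)=\psi_0(x,t)+\varepsilon\,\psi_1\Bigl(x,t,\tfrac{x}{\varepsilon},\tfrac{t}{\varepsilon^2}\Bigr)+\varepsilon^2\,\psi_2\Bigl(x,t,\tfrac{x}{\varepsilon},\tfrac{x}{\varepsilon^2},\tfrac{t}{\varepsilon^2}\Bigr),
\end{equation*}
whose gradient multi-scale converges strongly to $\nabla_x\psi_0+\nabla_y\psi_1+\nabla_z\psi_2$ by Remark~\ref{r2}. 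Multiplying the first equation in (\ref{eq1}) by $\Phi_\varepsilon$, integrating over $Q_T^\varepsilon$, and using the weak formulation with the $V_\varepsilon$-boundary conditions produces four groups of terms: a time-derivative term carrying $\rho^\varepsilon$, a diffusion term with $A^\varepsilon$, and two reaction terms coming from the decomposition $\tfrac{1}{\varepsilon}g^\varepsilon=\operatorname{div}G^\varepsilon-\partial_rG^\varepsilon\cdot\nabla u_\varepsilon$ of Remark~\ref{r1}.

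I would then pass to the limit in each group. For the diffusion term I would rewrite $\int_{Q_T}\chi_{\Omega^\varepsilon}A^\varepsilon\nabla(P_\varepsilon u_\varepsilon)\cdot\nabla\Phi_\varepsilon$, note that $\chi_{\Omega^\varepsilon}A^\varepsilon\xrightarrow{s-ms}\chi_{Y_m}\chi_{Z_s}A$ (Proposition~\ref{p2} together with Remark~\ref{r2}), that $\nabla(P_\varepsilon u_\varepsilon)\xrightarrow{w-ms}\nabla_x u_0+\nabla_y u_1+\nabla_z u_2$ by (\ref{eq26}), and that $\nabla\Phi_\varepsilon$ converges strongly; the weak–strong product theorem (Theorem~\ref{t5}, Corollary~\ref{c2}) then yields the $\iiiint$ integral over $Y_m\times Z_s$, and the $1/|Z_s|$ prefactor arises after normalizing the $z$-average. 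For the reaction terms I would use that $u_\varepsilon\to u_0$ strongly in $L^2(\Omega_T)$ from (\ref{eq281}) so that, by continuity of $G$ and $\partial_r G$ in the $u$-slot (estimates (\ref{eq8})–(\ref{eq9})) together with Remark~\ref{r2}(iii), the sequences $G^\varepsilon(u_\varepsilon)$ and $\partial_r G^\varepsilon(u_\varepsilon)$ multi-scale converge strongly to $G(y,\tau,u_0)$ and $\partial_r G(y,\tau,u_0)$; pairing against the strongly convergent $\nabla\Phi_\varepsilon$ and $\psi_0$ gives the two $G$-integrals. The surviving $\iiint$ over $Y_m\times\mathcal{T}$ (with no $z$-dependence) for the $\operatorname{div}G$ term and the $\int_{Y_m}g(y,\tau,u_0)\psi_1$ term come precisely from the $\varepsilon$-scaling in $\Phi_\varepsilon$: the leading $\operatorname{div}G^\varepsilon\cdot\psi_0$ contributes after an integration by parts, while the $\tfrac{1}{\varepsilon}g^\varepsilon\cdot\varepsilon\psi_1$ piece survives in the limit and produces the $g\,\psi_1$ term via the mean-zero structure of $g$.

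The most delicate part is the time-derivative term $\int_{Q_T}\rho^\varepsilon\chi_{\Omega^\varepsilon}\tfrac{\partial(P_\varepsilon u_\varepsilon)}{\partial t}\Phi_\varepsilon$. Here the factor $\varepsilon\psi_1(x,t,\tfrac{x}{\varepsilon},\tfrac{t}{\varepsilon^2})$ interacts with the fast time scale $\tau=t/\varepsilon^2$: integrating by parts in $t$ transfers the derivative onto $\rho^\varepsilon\chi_{\Omega^\varepsilon}\Phi_\varepsilon$, and $\tfrac{\partial}{\partial t}\bigl(\varepsilon\psi_1(\cdot,t,\cdot,t/\varepsilon^2)\bigr)=\varepsilon\,\partial_t\psi_1+\tfrac{1}{\varepsilon}\partial_\tau\psi_1$ so the $\tfrac{1}{\varepsilon}$ does not blow up but instead produces, after using Proposition~\ref{p4} and the uniform bound (\ref{eq18}) from Lemma~\ref{l2}, the bracket term $-\int_{\Omega_T}[\rho\chi_{Y_m}u_1,\partial_\tau\psi_1]\,dxdt$. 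The leading $\rho^\varepsilon\chi_{\Omega^\varepsilon}\tfrac{\partial(P_\varepsilon u_\varepsilon)}{\partial t}\psi_0$ piece, meanwhile, converges against the weak-$*$ limit $\rho\chi_{Y_m}\otimes\chi_{Z_s}$ of $\rho^\varepsilon\chi_{\Omega^\varepsilon}$ to give $\bigl(\int_{Y_m}\rho\,dy\bigr)|Z_s|\int_{Q_T}\tfrac{\partial u_0}{\partial t}\psi_0$; the $|Z_s|$ is absorbed by the same normalization used in the diffusion term so that the stated coefficient $\int_{Y_m}\rho(y)\,dy$ appears. I expect this simultaneous handling of the $\varepsilon$-scaled test function, the fast time variable, and the identification of the limit via Lemma~\ref{l2} and Proposition~\ref{p4} to be the main obstacle; the remaining identifications are routine applications of the weak–strong convergence results. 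Finally, I would invoke the density of $\mathfrak{F}_0^\infty$ in $\mathbb{F}_0^1$ to confirm that the limiting identity is the variational problem (\ref{eq29}).
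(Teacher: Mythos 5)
Your proposal matches the paper's proof in all essentials: the same oscillating test function $\psi_0+\varepsilon\psi_1+\varepsilon^2\psi_2$, the same term-by-term limit passage using the weak--strong multi-scale product results, the same treatment of the large reaction term by splitting off the $\tfrac{1}{\varepsilon}g^{\varepsilon}\psi_0$ piece via the $\operatorname{div}G$ decomposition of Remark~\ref{r1}, and the same handling of the time term by integrating by parts in $t$ and applying Proposition~\ref{p4} to the $\tfrac{1}{\varepsilon}\partial_\tau\psi_1$ contribution. The argument is correct as outlined (the closing appeal to density of $\mathfrak{F}_0^\infty$ is unnecessary for the statement as given, but harmless).
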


\begin{proof}
Let $\varepsilon>0$ and let $(\psi_0,\psi_1,\psi_2)\in \mathfrak{F}^\infty_0$%
. The appropriate oscillating test function for our problem is defined as
follows: 
\begin{equation}  \label{eq30}
\psi_\varepsilon(x,t)=\psi_0(x,t)+\varepsilon\psi_1(x,t,\frac{x}{\varepsilon}%
,\frac{t}{\varepsilon^2})+\varepsilon^2\psi_2(x,t,\frac{x}{\varepsilon},%
\frac{x}{\varepsilon^2},\frac{t}{\varepsilon^2}),\qquad
(x,t)\in\Omega\times(0,T).
\end{equation}
Multiplying all terms in the main equation of (\ref{eq1}) by $%
\psi_\varepsilon(x,t)$ and integrating over $\Omega^\varepsilon\times (0,T)$
leads to: 
\begin{equation*}
\int_{\Omega^\varepsilon_T}\rho(\frac{x}{\varepsilon})\frac{\partial
u_\varepsilon}{\partial t}\psi_\varepsilon\, dxdt = -
\int_{\Omega^\varepsilon_T}A(\frac{x}{\varepsilon},\frac{t}{\varepsilon^2}%
)\nabla u_\varepsilon\cdot\nabla\psi_\varepsilon\, dxdt +\frac{1}{\varepsilon%
} \int_{\Omega^\varepsilon_T}g(\frac{x}{\varepsilon},\frac{t}{\varepsilon^2}%
,u_\varepsilon)\psi^\varepsilon\, dxdt,
\end{equation*}
or equivalently to 
\begin{eqnarray}  \label{eq31}
\int_{\Omega_T}\rho(\frac{x}{\varepsilon})\chi_{\Omega^\varepsilon}(\frac{x}{%
\varepsilon},\frac{x}{\varepsilon^2})\frac{\partial (P_\varepsilon
u_\varepsilon)}{\partial t}\psi_\varepsilon\, dxdt& = &-
\int_{\Omega_T}\chi_{\Omega^\varepsilon}(\frac{x}{\varepsilon},\frac{x}{%
\varepsilon^2})A(\frac{x}{\varepsilon},\frac{t}{\varepsilon^2})\nabla
(P_\varepsilon u_\varepsilon)\cdot\nabla\psi_\varepsilon\, dxdt \\
& &+\frac{1}{\varepsilon} \int_{\Omega_T}\chi_{\Omega^\varepsilon}(\frac{x}{%
\varepsilon},\frac{x}{\varepsilon^2})g(\frac{x}{\varepsilon},\frac{t}{%
\varepsilon^2},P_\varepsilon u_\varepsilon)\psi^\varepsilon\, dxdt.  \notag
\end{eqnarray}
We now pass to the limit in (\ref{eq31}) as $E^{\prime }\ni\varepsilon\to 0$
. We start with the term in the left hand side. We have 
\begin{equation}  \label{eq32}
\int_{\Omega_T}\rho(\frac{x}{\varepsilon})\chi_{\Omega^\varepsilon}(\frac{x}{%
\varepsilon},\frac{x}{\varepsilon^2})\frac{\partial (P_\varepsilon
u_\varepsilon)}{\partial t}\psi_\varepsilon\, dxdt =-\int_{\Omega_T}\rho(%
\frac{x}{\varepsilon})\chi_{\Omega^\varepsilon}(\frac{x}{\varepsilon},\frac{x%
}{\varepsilon^2})P_\varepsilon u_\varepsilon\frac{\partial \psi_\varepsilon}{%
\partial t}\, dxdt,
\end{equation}
and we recall that 
\begin{equation*}
\frac{\partial \psi_\varepsilon}{\partial t}(x,t)=\frac{\partial \psi_0}{%
\partial t}(x,t)+\varepsilon \frac{\partial \psi_1}{\partial t}(x,t,\frac{x}{%
\varepsilon},\frac{t}{\varepsilon^2})+\frac{1}{\varepsilon} \frac{\partial
\psi_1}{\partial \tau}(x,t,\frac{x}{\varepsilon},\frac{t}{\varepsilon^2})+
\varepsilon^2 \frac{\partial \psi_2}{\partial t}(x,t,\frac{x}{\varepsilon},%
\frac{x}{\varepsilon^2},\frac{t}{\varepsilon^2}) + \frac{\partial \psi_2}{%
\partial \tau}(x,t,\frac{x}{\varepsilon},\frac{x}{\varepsilon^2},\frac{t}{%
\varepsilon^2}).
\end{equation*}
It follows from (\ref{eq281}) that 
\begin{eqnarray}
\lim_{E^{\prime }\ni\varepsilon\to 0}\int_{\Omega_T}\rho(\frac{x}{\varepsilon%
})\chi_{\Omega^\varepsilon}(\frac{x}{\varepsilon},\frac{x}{\varepsilon^2}%
)P_\varepsilon u_\varepsilon\frac{\partial \psi_0}{\partial t}\, dxdt &=&
\iiint\limits_{\Omega_T\times Y\times
Z}\rho(y)\chi_{Y_m}(y)\chi_{Z_s}(z)u_0(x,t)\frac{\partial \psi_0}{\partial t}%
(x,t)\, dxdtdydz  \notag \\
&=&- |Z_s|\left(\int_{Y_m}\rho(y)dy\right)\int_{\Omega_T}\frac{\partial u_0}{%
\partial t}\psi_0\,dxdt .  \label{eq33}
\end{eqnarray}
By means of Proposition \ref{p4} we have

\begin{eqnarray}
&&\lim_{E^{\prime }\ni\varepsilon\to 0}\frac{1}{\varepsilon}%
\int_{\Omega_T}\rho(\frac{x}{\varepsilon})\chi_{\Omega^\varepsilon}(\frac{x}{%
\varepsilon},\frac{x}{\varepsilon^2})P_\varepsilon u_\varepsilon\frac{%
\partial \psi_1}{\partial \tau}(x,t,\frac{x}{\varepsilon},\frac{t}{%
\varepsilon^2})\,  \notag \\
& &\qquad\qquad\qquad\qquad = \iiint\limits_{\Omega_T\times Y\times Z\times 
\mathcal{T}}\rho(y)\chi_{Y_m}(y)\chi_{Z_s}(z)u_1\frac{\partial \psi_1}{%
\partial \tau}\, dxdtdydzd\tau  \label{eq34} \\
& & \qquad\qquad\qquad\qquad=|Z_s|\int_{\Omega_T}\left[\rho\chi_{Y_m}
u_1(x,t),\frac{\partial \psi_1}{\partial \tau}(x,t)\right]\,dxdt  \notag
\end{eqnarray}
The following limits hold: 
\begin{eqnarray}
& &\lim_{E^{\prime }\ni\varepsilon\to 0}\varepsilon\int_{\Omega_T}\rho(\frac{%
x}{\varepsilon})\chi_{\Omega^\varepsilon}(\frac{x}{\varepsilon},\frac{x}{%
\varepsilon^2})P_\varepsilon u_\varepsilon\frac{\partial \psi_1}{\partial t}%
(x,t,\frac{x}{\varepsilon},\frac{t}{\varepsilon^2})\, dxdt = 0,  \label{eq35}
\\
& & \lim_{E^{\prime }\ni\varepsilon\to 0}\varepsilon^2\int_{\Omega_T}\rho(%
\frac{x}{\varepsilon})\chi_{\Omega^\varepsilon}(\frac{x}{\varepsilon},\frac{x%
}{\varepsilon^2})P_\varepsilon u_\varepsilon\frac{\partial \psi_2}{\partial t%
}(x,t,\frac{x}{\varepsilon},\frac{x}{\varepsilon^2},\frac{t}{\varepsilon^2}%
)\, dxdt=0 ,  \label{eq36} \\
& &\lim_{E^{\prime }\ni\varepsilon\to 0}\int_{\Omega_T}\rho(\frac{x}{%
\varepsilon})\chi_{\Omega^\varepsilon}(\frac{x}{\varepsilon},\frac{x}{%
\varepsilon^2})P_\varepsilon u_\varepsilon\frac{\partial \psi_2}{\partial
\tau}(x,t,\frac{x}{\varepsilon},\frac{x}{\varepsilon^2},\frac{t}{%
\varepsilon^2})\, dxdt=0 .  \label{eq37}
\end{eqnarray}
After the passage to the limit in the left hand side of (\ref{eq37}), we
used the formula $\int_{\mathcal{T}}\frac{\partial \psi_2}{\partial\tau}%
d\tau=0$. Similar trivial arguments work for (\ref{eq35}) and (\ref{eq36}).
Thus as $E^{\prime }\ni\varepsilon\to 0$, we have 
\begin{eqnarray}
\int_{\Omega^\varepsilon_T}\rho(\frac{x}{\varepsilon})\frac{\partial
u_\varepsilon}{\partial t}\psi_\varepsilon\, dxdt &\to&
|Z_s|\left(\int_{Y_m}\rho(y)dy\right)\int_{\Omega_T}\frac{\partial u_0}{%
\partial t}\psi_0\,dxdt  \notag \\
&& \quad - |Z_s|\int_{\Omega_T}\left[\rho\chi_{Y_m}u_1(x,t),\frac{\partial
\psi_1}{\partial \tau}(x,t)\right]\,dxdt.  \label{eq38}
\end{eqnarray}

As regards the first term in the right hand side of (\ref{eq31}), it is
classical that, as $E^{\prime }\ni\varepsilon\to 0$, we have

\begin{eqnarray}  \label{eq39}
&& \int_{\Omega_T^\varepsilon}A(\frac{x}{\varepsilon},\frac{t}{\varepsilon^2}%
)\nabla (P_\varepsilon u_\varepsilon)\cdot\nabla\psi_\varepsilon\, dxdt
=\int_{\Omega_T}\chi_{\Omega^\varepsilon}(\frac{x}{\varepsilon},\frac{x}{%
\varepsilon^2})A(\frac{x}{\varepsilon},\frac{t}{\varepsilon^2})\nabla
(P_\varepsilon u_\varepsilon)\cdot\nabla\psi_\varepsilon\,
dxdt\qquad\qquad\qquad  \notag \\
&& \qquad \qquad\to \iiiint\limits_{\Omega_T\times Y_m\times Z_s\times 
\mathcal{T}}A(y,\tau)(\nabla_x u_0 + \nabla_y u_1 + \nabla_z u_2)(\nabla_x
\psi_0 + \nabla_y \psi_1 + \nabla_z \psi_2)\,dxdtdydzd\tau.
\end{eqnarray}

Concerning the second term in the right hand side of (\ref{eq31}), we first
rewrite it as follows: 
\begin{eqnarray}
\frac{1}{\varepsilon} \int_{\Omega_T^\varepsilon}g(\frac{x}{\varepsilon},%
\frac{t}{\varepsilon^2},P_\varepsilon u_\varepsilon)\psi_\varepsilon\, dxdt
&=& \frac{1}{\varepsilon} \int_{\Omega_T^\varepsilon}g(\frac{x}{\varepsilon},%
\frac{t}{\varepsilon^2},P_\varepsilon u_\varepsilon)\psi_0\, dxdt +
\int_{\Omega_T^\varepsilon}g(\frac{x}{\varepsilon},\frac{t}{\varepsilon^2}%
,P_\varepsilon u_\varepsilon)\psi_1(x,t,\frac{x}{\varepsilon},\frac{t}{%
\varepsilon^2})\, dxdt  \notag \\
&& + \varepsilon\int_{\Omega_T^\varepsilon}g(\frac{x}{\varepsilon},\frac{t}{%
\varepsilon^2},P_\varepsilon u_\varepsilon)\psi_2(x,t,\frac{x}{\varepsilon},%
\frac{x}{\varepsilon^2},\frac{t}{\varepsilon^2})\, dxdt.  \label{eq40}
\end{eqnarray}
It is straightforward from \cite[Lemma 5]{woukengaa} that 
\begin{eqnarray}  \label{eq41}
\int_{\Omega_T^\varepsilon}g(\frac{x}{\varepsilon},\frac{t}{\varepsilon^2}%
,P_\varepsilon u_\varepsilon)\psi_1(x,t,\frac{x}{\varepsilon},\frac{t}{%
\varepsilon^2})\, dxdt &=&\int_{\Omega_T}g(\frac{x}{\varepsilon},\frac{t}{%
\varepsilon^2},P_\varepsilon u_\varepsilon)\chi_{\Omega^\varepsilon}(\frac{x%
}{\varepsilon},\frac{x}{\varepsilon^2})\psi_1(x,t,\frac{x}{\varepsilon},%
\frac{t}{\varepsilon^2})\, dxdt  \notag \\
&\to &|Z_s|\iiint\limits_{\Omega_T\times Y_m\times\mathcal{T}%
}g(y,\tau,u_0)\psi_1(x,t,y,\tau)\, dxdtdyd\tau
\end{eqnarray}
as $E^{\prime }\ni\varepsilon\to 0$. Likewise, it holds that 
\begin{equation}  \label{eq42}
\lim_{E^{\prime }\ni\varepsilon\to 0}
\varepsilon\int_{\Omega_T^\varepsilon}g(\frac{x}{\varepsilon},\frac{t}{%
\varepsilon^2},P_\varepsilon u_\varepsilon)\psi_2(x,t,\frac{x}{\varepsilon},%
\frac{x}{\varepsilon^2},\frac{t}{\varepsilon^2})\, dxdt = 0.
\end{equation}
It then remains to deal with the first term in the right hand side of (\ref%
{eq40}). We have 
\begin{eqnarray}
\frac{1}{\varepsilon} \int_{\Omega_T^\varepsilon}g(\frac{x}{\varepsilon},%
\frac{t}{\varepsilon^2},P_\varepsilon u_\varepsilon)\psi_0\, dxdt &=&
-\int_{\Omega^\varepsilon_T}G(\frac{x}{\varepsilon},\frac{t}{\varepsilon^2}%
,P_\varepsilon u_\varepsilon)\cdot\nabla_x\psi_0\,dxdt  \notag \\
&&\qquad\qquad- \int_{\Omega^\varepsilon_T}\left(\partial_r G(\frac{x}{%
\varepsilon},\frac{t}{\varepsilon^2},P_\varepsilon
u_\varepsilon)\cdot\nabla_x u_\varepsilon\right)\psi_0\,dxdt.  \label{eq43}
\end{eqnarray}
It follows from \cite[Lemma 5 - Remark 2]{woukengaa} that as $E^{\prime
}\ni\varepsilon\to 0$, 
\begin{equation}  \label{eq44}
\int_{\Omega_T^\varepsilon}\left(\partial_r G(\frac{x}{\varepsilon},\frac{t}{%
\varepsilon^2},P_\varepsilon u_\varepsilon)\cdot\nabla_x
u_\varepsilon\right)\psi_0\,dxdt = \iiiint\limits_{\Omega_T\times Y_m\times
Z_s\times\mathcal{T}}\left(\partial_r G(y,\tau,u_0)\cdot (\nabla_x u_0 +
\nabla_y u_1 + \nabla_z u_2)\right)\psi_0\, dxdtdydzd\tau.
\end{equation}
Likewise, 
\begin{equation}  \label{eq45}
\lim_{E^{\prime }\ni\varepsilon\to 0}\int_{\Omega_T^\varepsilon}G(\frac{x}{%
\varepsilon},\frac{t}{\varepsilon^2},P_\varepsilon
u_\varepsilon)\cdot\nabla_x\psi_0\,dxdt = |Z_s|\iiint\limits_{\Omega_T\times
Y_m\times\mathcal{T}}G(y,\tau, u_0)\cdot\nabla_x \psi_0\, dxdtdyd\tau.
\end{equation}
Thus, as $E^{\prime }\ni\varepsilon\to 0$, we have 
\begin{eqnarray}
\frac{1}{\varepsilon} \int_{\Omega_T^\varepsilon}g(\frac{x}{\varepsilon},%
\frac{t}{\varepsilon^2},P_\varepsilon u_\varepsilon)\psi_\varepsilon\, dxdt
& \to &-\iiiint\limits_{\Omega_T\times Y_m\times Z_s\times\mathcal{T}%
}\left(\partial_r G(y,\tau,u_0)\cdot (\nabla_x u_0 + \nabla_y u_1 + \nabla_z
u_2)\right)\psi_0\, dxdtdydzd\tau  \notag \\
&& - \qquad |Z_s|\iiint\limits_{\Omega_T\times Y_m\times\mathcal{T}%
}G(y,\tau, u_0)\cdot\nabla_x \psi_0\, dxdtdyd\tau \\
&& + \qquad |Z_s|\iiint\limits_{\Omega_T\times Y_m\times\mathcal{T}%
}g(y,\tau,u_0)\psi_1(x,t,y,\tau)\, dxdtdyd\tau,  \notag
\end{eqnarray}
which combined with (\ref{eq38})-(\ref{eq39}) concludes the proof.
\end{proof}

The second term on the left hand side of (\ref{eq29}) needs further
investigations. In fact, for further needs, we would like to rewrite it
using formula (\ref{eq262}) of Proposition \ref{p3} as follows 
\begin{equation*}
- \int_{\Omega_T}\left
[\rho\chi_{Y_m}u_1(x,t),\frac{\partial \psi_1}{%
\partial \tau}(x,t)\right]\,dxdt = \int_{\Omega_T}\left[\rho\chi_{Y_m}\frac{%
\partial u_1}{\partial \tau}(x,t), \psi_1(x,t)\right]\,dxdt,
\end{equation*}
but this requires that $u_1\in \mathcal{W}$.

\begin{proposition}
\label{p6} The function $u_{1}\in L_{per}^{2}(\mathcal{T};H_{\rho
}^{1}(Y_{m}))$ defined by \emph{(\ref{eq28})} and Theorem \emph{\ref{t6}}
belongs to $\mathcal{W}$.
\end{proposition}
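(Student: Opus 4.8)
The plan is to read off from the variational identity (\ref{eq29}) the weak equation in the variable $\tau$ satisfied by $u_1$, and then to show that the functional it produces is continuous for the $L^2_{per}(\mathcal{T};H^1_\rho(Y_m))$-topology, so that the characterization theorem stated just before (\ref{eq261}) (cf.\ \cite{woukengaa}, Lemmas 2 and 3) applies and places $\chi_{Y_m}\rho\,\partial_\tau u_1$ in the dual space required by the definition of $\mathcal{W}$.

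First I would specialize (\ref{eq29}) to test triples of the form $(\psi_0,\psi_1,\psi_2)=(0,\psi_1,0)$, which are admissible elements of $\mathfrak{F}^\infty_0$ with $\psi_1\in\mathcal{C}_0^\infty(\Omega_T)\otimes\mathcal{D}_{per}(\mathcal{T})\otimes\mathcal{D}_\rho(Y_m)$. Every term carrying $\psi_0$ or $\psi_2$ drops out, and one is left with
\begin{equation*}
-\int_{\Omega_T}\left[\rho\chi_{Y_m}u_1,\frac{\partial \psi_1}{\partial\tau}\right]dxdt = \iiint\limits_{\Omega_T\times Y_m\times\mathcal{T}}g(y,\tau,u_0)\psi_1\,dxdtdyd\tau - \frac{1}{|Z_s|}\iiiint\limits_{\Omega_T\times Y_m\times Z_s\times\mathcal{T}}A(y,\tau)(\nabla_x u_0+\nabla_y u_1+\nabla_z u_2)\cdot\nabla_y\psi_1\,dxdtdydzd\tau.
\end{equation*}
By the definition of the duality bracket $[\cdot,\cdot]$ and of the distributional derivative in $\tau$, and since $\rho\chi_{Y_m}$ is independent of $\tau$, the left-hand side is exactly the action of $\chi_{Y_m}\rho\,\partial_\tau u_1$ on $\psi_1$; thus the identity displays this object, a priori only a distribution on $\mathcal{D}_\rho(Y_m)\otimes\mathcal{D}_{per}(\mathcal{T})$, through its pairing against the test functions $\psi_1$.

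Next I would estimate the right-hand side. By \textbf{A2} and \textbf{A3} one has $|g(y,\tau,u_0)|\le C|u_0|$, so the first integral is bounded by $C\|u_0\|_{L^2}\,\|\psi_1\|_{L^2(\Omega_T;L^2(\mathcal{T};L^2_{per}(Y)))}$. For the second integral, the boundedness of $A$ from \textbf{A1} together with $(u_0,u_1,u_2)\in\mathbb{F}^1_0$ yields the bound $C\|\nabla_x u_0+\nabla_y u_1+\nabla_z u_2\|_{L^2}\,\|\nabla_y\psi_1\|_{L^2}$. Since $H^1_\rho(Y_m)$ carries the $H^1_{per}(Y)$-norm, both estimates are dominated by $C\|\psi_1\|_{L^2(\Omega_T;L^2(\mathcal{T};H^1_\rho(Y_m)))}$, with $C$ depending only on the finite norms of $u_0,u_1,u_2$. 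Hence the functional $\psi_1\mapsto\langle\chi_{Y_m}\rho\,\partial_\tau u_1,\psi_1\rangle$ is continuous for the $L^2_{per}(\mathcal{T};H^1_\rho(Y_m))$-norm (after integration in $(x,t)$).

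Finally I would invoke the characterization theorem preceding (\ref{eq261}): a distribution continuous on $\mathcal{D}_\rho(Y_m)\otimes\mathcal{D}_{per}(\mathcal{T})$ for the $L^2_{per}(\mathcal{T};H^1_\rho(Y_m))$-norm lies in $L^2_{per}(\mathcal{T};(H^1_\rho(Y_m))')$ and is represented by the indicated integral pairing. Applied for almost every $(x,t)\in\Omega_T$, this shows $\chi_{Y_m}\rho\,\partial_\tau u_1\in L^2(\Omega_T;L^2_{per}(\mathcal{T};(H^1_\rho(Y_m))'))$, which is precisely the membership $u_1\in\mathcal{W}$. The main obstacle is the first step, namely correctly identifying the left-hand side of the specialized identity as the distributional pairing of $\chi_{Y_m}\rho\,\partial_\tau u_1$ against $\psi_1$ — that is, justifying the integration by parts in $\tau$ at the level of the bracket $[\cdot,\cdot]$ \emph{before} $u_1$ is known to belong to $\mathcal{W}$; once this is settled the continuity estimate and the representation theorem finish the proof.
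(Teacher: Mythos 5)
Your proposal is correct and follows essentially the same route as the paper: restrict (\ref{eq29}) to test triples with $\psi_0=\psi_2=0$, observe that the resulting right-hand side defines a functional continuous for the $L_{per}^{2}(\mathcal{T};H_{\rho}^{1}(Y_{m}))$-norm (the paper writes $g=\operatorname{div}_y G$ where you bound $|g|\leq C|u_0|$ directly, an immaterial difference), and invoke the representation theorem to conclude $\chi_{Y_m}\rho\,\partial_\tau u_1\in L_{per}^{2}(\mathcal{T};(H_{\rho}^{1}(Y_{m}))')$. The ``obstacle'' you flag at the end is not one: for $u_1\in L^{2}(\mathcal{T};H_{\rho}^{1}(Y_{m}))$ the bracket $-\left[\rho\chi_{Y_m}u_1,\partial_\tau\psi_1\right]$ is an ordinary integral and is, by definition of the distributional derivative, the action of $\chi_{Y_m}\rho\,\partial_\tau u_1$ on $\psi_1$, so no prior membership in $\mathcal{W}$ is required.
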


\begin{proof}
Let $\psi _{0}=\psi _{2}=0$ and $\psi _{1}=\varphi \otimes \psi $ in (\ref%
{eq29}), where $\varphi \in \mathcal{C}_{0}^{\infty }(\Omega _{T})$ and $%
\psi \in \mathcal{D}_{per}(\mathcal{T})\otimes \mathcal{D}_{\rho }(Y_{m})$.
Using the arbitrariness of $\varphi $, we are led to

\begin{eqnarray*}
&&-\left[ \rho \chi _{Y_{m}}u_{1}(x,t),\frac{\partial \psi }{\partial \tau }%
(x,t)\right] \,dxdt=\iint\limits_{Y\times \mathcal{T}}\chi _{Y_{m}}\rho 
\frac{\partial u_{1}}{\partial \tau }\psi \,dyd\tau \\
&&\qquad \qquad \qquad =\iint\limits_{Y\times \mathcal{T}}g(y,\tau
,u_{0})\psi \,dyd\tau -\frac{1}{|Z_{s}|}\iiint\limits_{Y_{m}\times
Z_{s}\times \mathcal{T}}A(y,\tau )(\nabla _{x}u_{0}+\nabla _{y}u_{1}+\nabla
_{z}u_{2})\nabla _{y}\psi \,dydzd\tau .
\end{eqnarray*}%
But $g=\text{div}_{y}G$, and the boundedness of the matrix $A$ implies that
the linear functional 
\begin{equation*}
\psi \mapsto \iint\limits_{Y\times \mathcal{T}}g(y,\tau ,u_{0})\psi
\,dyd\tau -\frac{1}{|Z_{s}|}\iiint\limits_{Y_{m}\times Z_{s}\times \mathcal{T%
}}A(y,\tau )(\nabla _{x}u_{0}+\nabla _{y}u_{1}+\nabla _{z}u_{2})\nabla
_{y}\psi \,dydzd\tau
\end{equation*}%
is continuous on $\mathcal{D}_{per}(\mathcal{T})\otimes \mathcal{D}_{\rho
}(Y_{m})$ with the $L_{per}^{2}(\mathcal{T};H_{\rho }^{1}(Y_{m}))$-norm.
Thus can be extended to and element of $L_{per}^{2}(\mathcal{T};(H_{\rho
}^{1}(Y_{m}))^{\prime })$. In other words, $\chi _{Y_{m}}\rho \frac{\partial
u_{1}}{\partial \tau }\in L_{per}^{2}(\mathcal{T};(H_{\rho
}^{1}(Y_{m}))^{\prime })$ and the proof is completed.
\end{proof}

Therefore the global homogenized problem of (\ref{eq1}) reads

\begin{equation}  \label{eq46}
\left\{ \begin{aligned} &(u_0, u_1, u_2)\in \mathbb{F}^1_0, \\&
\left(\int_{Y_m}\rho(y)dy\right) \int_{\Omega_T}\frac{\partial u_0}{\partial
t}\psi_0\,dxdt+ \int_{\Omega_T}\left[\rho\chi_{Y_m}\frac{\partial
u_1}{\partial \tau}(x,t),\psi_1(x,t)\right]\,dxdt \\& \qquad\qquad = -
\iiint\limits_{\Omega_T\times Y_m\times \mathcal{T}}
G(y,\tau,u_0)\cdot\nabla \psi_0\,dxdtdyd\tau + \iiint\limits_{\Omega_T\times
Y_m\times \mathcal{T}}g(y,\tau,u_0)\psi_1\,dxdtdyd\tau\\& - \frac{1}{|Z_s|}\
\ \  \iiiint\limits_{\Omega_T\times Y_m\times
Z_s\times\mathcal{T}}A(y,\tau)(\nabla_x u_0 + \nabla_y u_1 + \nabla_z
u_2)\cdot(\nabla_x \psi_0 + \nabla_y \psi_1 + \nabla_z
\psi_2)\,dxdtdydzd\tau \\& - \frac{1}{|Z_s|}\ \ \
\iiiint\limits_{\Omega_T\times Y_m\times
Z_s\times\mathcal{T}}\left(\partial_r G(y,\tau,u_0)\cdot(\nabla_x u_0 +
\nabla_y u_1 + \nabla_z u_2)\right)\psi_0\,dxdtdydzd\tau \\& \text{ for all
} \ \ (\psi_0,\psi_1,\psi_2)\in \mathfrak{F}^\infty_0. \end{aligned} \right.
\end{equation}
The variational problem (\ref{eq46}) is termed global since it contains the
macroscopic homogenized problem and the local problem.

\subsection{The macroscopic problem}

We are now in a position to derive the equation describing the macroscopic
behavior of the $\varepsilon $-problem (\ref{eq1}). The variational problem (%
\ref{eq46}) is equivalent to the following system:

\begin{equation}  \label{eq47}
\left\{ \begin{aligned} & \left(\int_{Y_m}\rho(y)dy\right)
\int_{\Omega_T}\frac{\partial u_0}{\partial t}\psi_0\,dxdt= -
\iiint\limits_{\Omega_T\times Y_m\times \mathcal{T}}
G(y,\tau,u_0)\cdot\nabla \psi_0\,dxdtdyd\tau \\& - \frac{1}{|Z_s|}\ \ \
\iiiint\limits_{\Omega_T\times Y_m\times
Z_s\times\mathcal{T}}A(y,\tau)(\nabla_x u_0 + \nabla_y u_1 + \nabla_z
u_2)\cdot(\nabla_x \psi_0 )\,dxdtdydzd\tau \\& - \frac{1}{|Z_s|}\ \ \
\iiiint\limits_{\Omega_T\times Y_m\times
Z_s\times\mathcal{T}}\left(\partial_r G(y,\tau,u_0)\cdot(\nabla_x u_0 +
\nabla_y u_1 + \nabla_z u_2)\right)\psi_0\,dxdtdydzd\tau \\& \text{ for all
} \ \ \psi_0\in \mathcal{C}^\infty_0(\Omega_T), \end{aligned} \right.
\end{equation}

\begin{equation}  \label{eq48}
\left\{ \begin{aligned} & \int_{\Omega_T}\left[\rho\chi_{Y_m}\frac{\partial
u_1}{\partial \tau}(x,t),\psi_1(x,t)\right]\,dxdt =
\iiint\limits_{\Omega_T\times Y_m\times
\mathcal{T}}g(y,\tau,u_0)\psi_1\,dxdtdyd\tau\\& - \frac{1}{|Z_s|}\ \ \
\iiiint\limits_{\Omega_T\times Y_m\times
Z_s\times\mathcal{T}}A(y,\tau)(\nabla_x u_0 + \nabla_y u_1 + \nabla_z
u_2)\cdot( \nabla_y \psi_1)\,dxdtdydzd\tau \\& \text{ for all }\psi_1\in
\mathcal{C}^\infty_0(\Omega_T)\bigotimes\mathcal{D}_{per}(\mathcal{T})%
\bigotimes\mathcal{D}_{\rho}(Y_m), \end{aligned} \right.
\end{equation}
and

\begin{equation}  \label{eq49}
\left\{ \begin{aligned} & \iiiint\limits_{\Omega_T\times Y_m\times
Z_s\times\mathcal{T}}A(y,\tau)(\nabla_x u_0 + \nabla_y u_1 + \nabla_z
u_2)\cdot( \nabla_z \psi_2)\,dxdtdydzd\tau = 0 \\& \text{ for all }\psi_2\in
\mathcal{C}^\infty_0(\Omega_T)\bigotimes\mathcal{D}_{per}(\mathcal{T})%
\bigotimes\mathcal{D}_{per}(Y_m)\bigotimes \mathcal{D}_{per}(Z).
\end{aligned} \right.
\end{equation}

We first deal with (\ref{eq49}). We start with a few preliminaries. We
define $H^1_{\#}(Z_s)$ to be the space of functions in $H^1(Z_s)$ assuming
same values on the opposites faces of $Z$, and satisfying $%
\int_{Z_s}u(z)dz=0 $. We remark that if $u\in H^1_{per}(Z)$ with $%
\int_{Z_s}u(z)dz=0$ then its restriction to $Z_s$ (which is still denoted by 
$u$ in the sequel) belongs to $H^1_{\#}(Z_s)$. Vice-versa, the extension of
a function $u\in H^1_{\#}(Z_s)$ belongs to $H^1_{per}(Z)$ with $%
\int_{Z_s}u(z)dz=0$. We have the following result whose proof is obvious and
therefore omitted.

\begin{proposition}
Let $1\leq j\leq N$ and let $(y,\tau)\in \mathbb{R}^N\times \mathbb{R}$ be
fixed. The following microscopic local problem admits a solution which is
uniquely defined almost everywhere in $Z_s$. 
\begin{equation}  \label{eq50}
\left\{ \begin{aligned} &\chi^j(y,\tau)\in H^1_{\#}(Z_s): \\&
\int_{Z_s}A(y,\tau) \nabla_z \chi^j\cdot \nabla_z \omega\,dz = -\sum_{k=1}^N
a_{kj}\int_{Z_s}\frac{\partial \omega}{\partial z_k}\, dz \\& \text{ for all
}\omega\in H^1_{\#}(Z_s). \end{aligned} \right.
\end{equation}
\end{proposition}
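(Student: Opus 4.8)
The plan is to recognize (\ref{eq50}) as the variational form of a linear elliptic cell problem and to settle existence and uniqueness by the Lax--Milgram theorem on the Hilbert space $H^1_{\#}(Z_s)$. Since $(y,\tau)\in\mathbb{R}^N\times\mathbb{R}$ is fixed throughout, the entries $a_{kj}=a_{kj}(y,\tau)$ are mere constants, so there are no measurability or regularity issues to track in $(y,\tau)$. I would introduce the bilinear form
\begin{equation*}
\mathfrak{a}(\varphi,\omega)=\int_{Z_s}A(y,\tau)\nabla_z\varphi\cdot\nabla_z\omega\,dz
\end{equation*}
and the linear functional
\begin{equation*}
\mathfrak{l}(\omega)=-\sum_{k=1}^N a_{kj}\int_{Z_s}\frac{\partial\omega}{\partial z_k}\,dz,
\end{equation*}
so that (\ref{eq50}) is exactly the assertion that $\chi^j\in H^1_{\#}(Z_s)$ satisfies $\mathfrak{a}(\chi^j,\omega)=\mathfrak{l}(\omega)$ for every $\omega\in H^1_{\#}(Z_s)$.

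Continuity of the two forms is straightforward from hypothesis \textbf{A1}. For $\mathfrak{a}$, the bound $\|a_{ij}\|_{L^\infty}\leq\Lambda$ together with Cauchy--Schwarz yields $|\mathfrak{a}(\varphi,\omega)|\leq C\,\|\nabla_z\varphi\|_{L^2(Z_s)}\|\nabla_z\omega\|_{L^2(Z_s)}$ with $C$ depending only on $\Lambda$ and $N$. For $\mathfrak{l}$, using $|a_{kj}(y,\tau)|\leq\Lambda$ and Cauchy--Schwarz on each $Z_s$-integral gives $|\mathfrak{l}(\omega)|\leq\Lambda\,|Z_s|^{1/2}\sum_{k}\|\partial\omega/\partial z_k\|_{L^2(Z_s)}\leq C\,\|\nabla_z\omega\|_{L^2(Z_s)}$, so $\mathfrak{l}$ is continuous for the gradient seminorm.

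The substantive point is coercivity. The uniform ellipticity in \textbf{A1} immediately delivers $\mathfrak{a}(\omega,\omega)\geq\Lambda^{-1}\|\nabla_z\omega\|_{L^2(Z_s)}^2$, so the real task is to know that $\|\nabla_z\cdot\|_{L^2(Z_s)}$ is a genuine norm on $H^1_{\#}(Z_s)$, equivalent to the ambient $H^1(Z_s)$ norm. Here I would invoke a Poincar\'e--Wirtinger inequality: because every $\omega\in H^1_{\#}(Z_s)$ has zero mean over $Z_s$ and $Z_s$ is open and connected (as assumed in the geometric setup), there is a constant $c_P>0$ with $\|\omega\|_{L^2(Z_s)}\leq c_P\|\nabla_z\omega\|_{L^2(Z_s)}$, whence $\mathfrak{a}(\omega,\omega)\geq c\,\|\omega\|_{H^1(Z_s)}^2$. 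This is exactly the step that makes the problem well posed despite $\mathfrak{a}$ seeing only gradients, and it is where the zero-mean normalization built into $H^1_{\#}(Z_s)$ is indispensable: without it the constants would lie in the kernel of $\mathfrak{a}$ and coercivity would fail. With continuity and coercivity in hand, Lax--Milgram furnishes a unique $\chi^j\in H^1_{\#}(Z_s)$ solving (\ref{eq50}), which is the unique solution (up to the normalization, hence uniquely defined almost everywhere in $Z_s$) claimed in the statement.
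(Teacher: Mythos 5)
Your proof is correct: the Lax--Milgram argument with coercivity supplied by the Poincar\'e--Wirtinger inequality on the connected Lipschitz set $Z_s$ (using the zero-mean normalization built into $H^1_{\#}(Z_s)$) is exactly the standard route, and the paper itself omits the proof as ``obvious,'' so this is precisely the argument the authors have in mind. No gaps to report.
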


Back to (\ref{eq49}), let $\psi _{2}=\varphi \otimes \omega $ with $\varphi
\in \mathcal{D}(\Omega _{T})\otimes \mathcal{D}_{per}(\mathcal{T})\otimes 
\mathcal{D}_{\rho }(Y_{m})$ and $\omega \in \mathcal{D}_{per}(Z_{s})$. We get

\begin{equation*}
\iiint\limits_{\Omega_T\times Y_m\times \mathcal{T}}\varphi(x,t,y,\tau)\,
dxdt\left[\int_{Z_s}A(y,\tau)(\nabla_x u_0 + \nabla_y u_1 + \nabla_z
u_2)\cdot\nabla_z\omega\, dz\right]=0
\end{equation*}
which by the arbitrariness of $\varphi$ gives, for fixed $(x,t)\in\Omega_T$
and fixed $(y,\tau)\in \mathbb{R}^N\times\mathbb{R} $,

\begin{equation}  \label{eq51}
\int_{Z_s}A(y,\tau)\nabla_z u_2\cdot\nabla_z\omega\,
dz=-\int_{Z_s}A(y,\tau)(\nabla_x u_0 + \nabla_y u_1)\cdot\nabla_z\omega\, dz.
\end{equation}
By inspection of the microscopic problems (\ref{eq50}) and (\ref{eq51}) it
appears by the uniqueness of the solution to (\ref{eq50}) that for almost
all $(x,t,y,\tau)$ fixed in $\Omega_T\times\mathbb{R}^N\times \mathbb{R}$

\begin{equation}  \label{eq52}
u_2(x,t,y,\tau)=\sum_{j=1}^N\left(\frac{\partial u_0}{\partial x_j}(x,t)+%
\frac{\partial u_1}{\partial y_j}(x,t,y,\tau)\right)\chi^j(y,\tau)\quad 
\text{ a.e., \ \ in } Z_s.
\end{equation}
For further needs, we introduce a notation. We define the matrix $%
\nabla_z\chi$ by 
\begin{equation*}
\left(\nabla_z \chi\right)_{ij}=\frac{\partial \chi^i}{\partial z_j}\qquad (1\leq i,j \leq N).
\end{equation*}
Then we can write in short for almost all $(x,t,y,\tau,z)\in\Omega_T\times%
\mathbb{R}^N\times\mathbb{R}\times \mathbb{R}^N$, 
\begin{equation}  \label{eq53}
\nabla_z u_2 = \nabla_z\chi\cdot(\nabla_x u_0 + \nabla_y u_1).
\end{equation}

We can now proceed and look at the mesoscopic scale. Let $(x,t)\in\Omega_T$
and $(r,\xi)\in \mathbb{R}\times\mathbb{R}^N$ be freely fixed and let $%
\pi(x,t,r,\xi)$ be defined by the mesoscopic cell problem:

\begin{equation}  \label{eq54}
\left\{ \begin{aligned} &\pi_1(x,t,r,\xi)\in \mathcal{W}\\
&\left[\rho\chi_{Y_m}\frac{\partial \pi_1}{\partial
\tau},\psi_1\right]\,dxdt- \frac{1}{|Z_s|}\ \ \  \iiint\limits_{ Y_m\times
Z_s\times\mathcal{T}}\left[A(I+\nabla_z\chi)\right](\xi + \nabla_y \pi_1
)\cdot( \nabla_y \psi_1 )\, dydzd\tau \\& \qquad = \iint\limits_{ Y_m\times
\mathcal{T}}g(y,\tau,r)\psi_1\,dyd\tau \\& \text{ for all
}\psi_1\in\mathcal{W}, \end{aligned} \right.
\end{equation}
Where $I$ stands for the identity matrix. For the sake of simplicity, we put 
$\tilde{A}=\int_{Z_s}A(I+\nabla_z\chi)\, dz$. The following Proposition
addresses the question of existence and uniqueness of the solution to the
variational problem (\ref{eq54}).

\begin{proposition}
\label{p6} The following local variational problem admits a solution which
is uniquely defined on $Y_m\times\tau$: 
\begin{equation}  \label{eq55}
\left\{ \begin{aligned} &\pi_1(x,t,r,\xi)\in \mathcal{W}\\
&\left[\rho\chi_{Y_m}\frac{\partial \pi_1}{\partial \tau},\psi_1\right]-
\frac{1}{|Z_s|}\  \iint\limits_{
Y_m\times\mathcal{T}}\tilde{A}(y,\tau)\nabla_y \pi_1 \cdot \nabla_y \psi_1
\, dyd\tau \\& \qquad = \iint\limits_{ Y_m\times
\mathcal{T}}g(y,\tau,r)\psi_1\,dyd\tau + \frac{1}{|Z_s|}\  \iint\limits_{
Y_m\times\mathcal{T}}\tilde{A}(y,\tau)\xi\cdot \nabla_y \psi_1 \, dyd\tau\\&
\text{ for all }\psi_1\in\mathcal{W}. \end{aligned} \right.
\end{equation}
\end{proposition}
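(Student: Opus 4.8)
The plan is to read \eqref{eq55} as a degenerate, $\tau$-periodic parabolic cell problem on $Y_m\times\mathcal{T}$, posed in the framework of the evolution triple $H^1_\rho(Y_m)\subset L^2_\rho(Y_m)\subset(H^1_\rho(Y_m))'$ and the degenerate time operator $R'=\chi_{Y_m}\rho\,\partial/\partial\tau$ introduced above, and to obtain existence and uniqueness from the abstract theory of such equations developed in \cite{pankovpankova, paronetto04} together with an energy estimate. Since the parameters $(x,t,r,\xi)$ are frozen, the problem is linear in $\pi_1$, and the two functionals on the right-hand side, namely $\psi_1\mapsto\iint_{Y_m\times\mathcal{T}}g(y,\tau,r)\psi_1\,dyd\tau$ and $\psi_1\mapsto\frac{1}{|Z_s|}\iint_{Y_m\times\mathcal{T}}\tilde A\xi\cdot\nabla_y\psi_1\,dyd\tau$, are continuous on $L^2_{per}(\mathcal{T};H^1_\rho(Y_m))$ by \textbf{A1}, \textbf{A2} and the boundedness of $g$. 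It then suffices to verify the coercivity hypothesis and to check that the remaining structural assumptions of the cited existence results are met.

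The first step is to establish that the effective matrix $\tilde A=\int_{Z_s}A(I+\nabla_z\chi)\,dz$ is positive definite. I would do this by the classical energy identity for the corrector problem \eqref{eq50}: fixing $\xi\in\mathbb{R}^N$ and setting $w_\xi=\sum_j\xi_j\chi^j\in H^1_{\#}(Z_s)$, equation \eqref{eq50} gives $\int_{Z_s}A(\xi+\nabla_z w_\xi)\cdot\nabla_z\omega\,dz=0$ for every $\omega\in H^1_{\#}(Z_s)$; taking $\omega=w_\xi$ and using the symmetry of $A$ yields $\tilde A\xi\cdot\xi=\int_{Z_s}A(\xi+\nabla_z w_\xi)\cdot(\xi+\nabla_z w_\xi)\,dz\ge\Lambda^{-1}\int_{Z_s}|\xi+\nabla_z w_\xi|^2\,dz$. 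This is strictly positive for $\xi\neq0$, because $\xi+\nabla_z w_\xi\equiv0$ would force $w_\xi=-\xi\cdot z+\text{const}$, incompatible with the $Z$-periodicity of $w_\xi$ unless $\xi=0$; by homogeneity, $\tilde A\xi\cdot\xi\ge c|\xi|^2$ for some $c>0$. Consequently the bilinear form $\frac{1}{|Z_s|}\iint_{Y_m\times\mathcal{T}}\tilde A\nabla_y u\cdot\nabla_y v\,dyd\tau$ is continuous and coercive in the $\nabla_y$-seminorm over $Y_m$, which, by the weighted Poincar\'{e}--Wirtinger inequality valid on the connected set $Y_m$ for functions satisfying the defining constraint $\int_{Y_m}\rho\,u\,dy=0$ of $H^1_\rho(Y_m)$, controls the full $L^2_{per}(\mathcal{T};H^1_\rho(Y_m))$-norm.

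With coercivity established, existence follows by applying the abstract solvability results of \cite{pankovpankova, paronetto04} to the periodic operator equation $R'\pi_1+\mathcal{A}\pi_1=f$, where $\mathcal{A}$ is the coercive elliptic operator associated with the $\tilde A$-form; the antisymmetry relation \eqref{eq262} and the continuity statement of Proposition~\ref{p3} are exactly the ingredients that make the $\tau$-periodicity meaningful and allow the a priori estimate to close. The main obstacle is the genuine degeneracy of the problem: both the temporal weight $\chi_{Y_m}\rho$ and the elliptic integral are supported only on $Y_m$, so the forms do not control the part of $\pi_1$ living on $Y\setminus Y_m$; in particular the space--time form on $\mathcal{W}\times\mathcal{W}$ is non-coercive (the time term is skew by \eqref{eq262}), so that neither Lax--Milgram nor a naive Galerkin scheme on the full space applies. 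This is precisely why the reduction through the projection $P$ onto the effective space $L^2_0(Y_m)$ and the operator $R$ was set up, and it is also why uniqueness can only be asserted on $Y_m\times\mathcal{T}$.

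Uniqueness on $Y_m\times\mathcal{T}$ I would then obtain by a direct energy computation. If $w=\pi_1-\pi_1'\in\mathcal{W}$ solves the homogeneous problem, testing with $\psi_1=w$ annihilates the time term by \eqref{eq262} (taking $u=v=w$ there), leaving $\frac{1}{|Z_s|}\iint_{Y_m\times\mathcal{T}}\tilde A\nabla_y w\cdot\nabla_y w\,dyd\tau=0$; the coercivity of $\tilde A$ forces $\nabla_y w=0$ a.e. on $Y_m\times\mathcal{T}$, so $w$ is independent of $y$ on the connected set $Y_m$, and the constraint $\int_{Y_m}\rho\,w\,dy=0$ with $\int_{Y_m}\rho\,dy>0$ gives $w=0$ almost everywhere on $Y_m\times\mathcal{T}$, which is the claimed uniqueness.
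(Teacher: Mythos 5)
Your argument is correct and, on the uniqueness side, is essentially the paper's own proof: test the homogeneous problem with the difference of two solutions, kill the time term via the antisymmetry \eqref{eq262}, use the positive definiteness of $\tilde{A}$ to get $\nabla_y\zeta=0$, and conclude from the constraint $\int_{Y_m}\rho\,\zeta\,dy=0$ that $\zeta=0$ on $Y_m\times\mathcal{T}$. Where you genuinely add value is on the points the paper leaves implicit. First, the paper invokes ``the uniform ellipticity of the homogenized matrix $\tilde{A}$'' without proof; your derivation of $\tilde{A}\xi\cdot\xi=\int_{Z_s}A(\xi+\nabla_z w_\xi)\cdot(\xi+\nabla_z w_\xi)\,dz\geq\Lambda^{-1}\int_{Z_s}|\xi+\nabla_z w_\xi|^2\,dz$ from the cell problem \eqref{eq50}, together with the periodicity/connectedness argument ruling out $\xi+\nabla_z w_\xi\equiv 0$, supplies exactly the missing lemma (modulo the paper's transposed-gradient convention $(\nabla_z\chi)_{ij}=\partial\chi^i/\partial z_j$, which is harmless here since $A$ is symmetric). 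Second, the paper disposes of existence in one sentence, claiming it is ``clear from the boundedness'' of the two forms; as you correctly observe, boundedness alone proves nothing, the space--time form is not coercive on $\mathcal{W}$ because the time term is skew, and one must pass through the degenerate time-periodic theory of \cite{pankovpankova, paronetto04} built on the triple $H^1_\rho(Y_m)\subset L^2_\rho(Y_m)\subset (H^1_\rho(Y_m))'$, with coercivity of the elliptic part (via a weighted Poincar\'e--Wirtinger inequality on $Y_m$) as the operative hypothesis. So your proof is not a different route but a completed version of the paper's route; the only caveat is that the applicability of the cited abstract results is asserted rather than checked in detail, which is no worse than what the paper itself does.
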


\begin{proof}
It is clear from the boundedness of the bilinear form on the left hand side,
and the boundedness of the linear form on the right hand side of (\ref{eq55}%
) that (\ref{eq55}) admits at least one solution in $\mathcal{W}$. As for
the question of uniqueness, let $\pi _{1},\ \theta _{1}$ be two solutions to
(\ref{eq55}). Then $\zeta _{1}=\pi _{1}-\theta _{1}$ solves (\ref{eq55})
with zero right hand side. This yields: 
\begin{equation*}
\left[ \rho \chi _{Y_{m}}\frac{\partial \zeta _{1}}{\partial \tau },\zeta
_{1}\right] -\frac{1}{|Z_{s}|}\ \iint\limits_{Y_{m}\times \mathcal{T}}\tilde{%
A}(y,\tau )\nabla _{y}\zeta _{1}\cdot \nabla _{y}\zeta _{1}\,dyd\tau =0.
\end{equation*}%
But formula (\ref{eq262}) of Proposition \ref{p3} implies 
\begin{equation*}
\left[ \rho \chi _{Y_{m}}\frac{\partial \zeta _{1}}{\partial \tau },\zeta
_{1}\right] =0.
\end{equation*}%
We are left with 
\begin{equation*}
\iint\limits_{Y_{m}\times \mathcal{T}}\tilde{A}(y,\tau )\nabla _{y}\zeta
_{1}\cdot \nabla _{y}\zeta _{1}\,dyd\tau =0,
\end{equation*}%
which by the uniform ellipticity of the homogenized matrix $\tilde{A}$
yields $\nabla _{y}\zeta _{1}=0\text{ \ a.e. in }Y_{m}\times \mathcal{T}$.
Therefore there exists a function $h$ depending only on $\tau $ such that $%
\xi _{1}(y,\tau )=h(\tau )\text{ for almost every }(y,\tau )\in Y_{m}\times 
\mathcal{T}$. But, since $\xi _{1}\in L_{per}^{2}(\mathcal{T};H_{\rho
}^{1}(Y_{m}))$, we have 
\begin{equation}
0=\int_{Y_{m}}\rho (y)\xi _{1}(y,\tau )\,dy=h(\tau )\int_{Y_{m}}\rho
(y)\,dy\ \ \text{ a.e. in }\mathcal{T}.
\end{equation}%
Thus $h=0$ since $\int_{Y_{m}}\rho (y)\,dy\neq 0$. Therefore $\xi _{1}=0$
almost every where in $Y_{m}\times \mathcal{T}$.
\end{proof}

Taking in particular $r=u_{0}(x,t)$ and $\xi =\nabla u_{0}(x,t)$ with $(x,t)$
arbitrarily chosen in $\Omega _{T}$ and then choosing in (\ref{eq54}) the
particular test functions $\psi _{1}=\varphi (x,t)v_{1}$, with $\varphi \in 
\mathcal{C}_{0}^{\infty }(\Omega _{T})$ and $v_{1}\in \left( \mathcal{D}%
_{per}(\mathcal{T})\otimes \mathcal{D}_{\rho }(Y_{m})\right) $, and finally
comparing the resulting equation with (\ref{eq48}), it follows by means of
Proposition~\ref{p6} (bear in mind that $\mathcal{D}_{per}(\mathcal{T}%
)\otimes \mathcal{D}_{\rho }(Y_{m})$ is dense in $\mathcal{W}$), that for
almost every $(x,t)\in \Omega _{T}$ we have

\begin{equation}  \label{eq56}
u_1(x,t) = \pi_1(x,t,u_0(x,t),\nabla_x u_0(x,t))
\end{equation}
almost everywhere on $Y_m \times\mathcal{T}$. The linearity of the problem (%
\ref{eq55}) suggests a more flexible expression of its solution $\pi_1$. We
formulate the following variational problems

\begin{equation}  \label{eq57}
\left\{ \begin{aligned} &\omega_1(x,t,r)\in \mathcal{W}\\
&\left[\rho\chi_{Y_m}\frac{\partial \omega_1}{\partial \tau},\psi \right]-
\frac{1}{|Z_s|}\  \iint\limits_{
Y_m\times\mathcal{T}}\tilde{A}(y,\tau)\nabla_y \omega_1 \cdot \nabla_y \psi
\, dyd\tau = \iint\limits_{ Y_m\times \mathcal{T}}g(y,\tau,r)\psi\,dyd\tau
\\& \text{ for all }\psi\in\mathcal{W}. \end{aligned} \right.
\end{equation}
and 
\begin{equation}  \label{eq58}
\left\{ \begin{aligned} &\theta(x,t)=(\theta_i(x,t))_{1\leq i\leq N}\in
(\mathcal{W})^N\\ &\left[\rho\chi_{Y_m}\frac{\partial \theta_i}{\partial
\tau},\psi\right]- \frac{1}{|Z_s|}\  \iint\limits_{
Y_m\times\mathcal{T}}\tilde{A}(y,\tau)\nabla_y \theta_i \cdot \nabla_y \psi
\, dyd\tau = \frac{1}{|Z_s|}\sum_{k=1}^N\  \iint\limits_{
Y_m\times\mathcal{T}}\tilde{A}_{ik}(y,\tau) \frac{\partial \psi}{\partial
y_k} \, dyd\tau\\& \text{ for all }\psi\in\mathcal{W}, \end{aligned} \right.
\end{equation}
and leave to the reader to check that they admits solutions that are
uniquely defined on $Y_m\times\tau$ and satisfy 
\begin{equation}  \label{eq59}
\pi_1(x,t,r,\xi)(y,\tau) = \theta(x,t,y,\tau)\cdot\xi +
\omega_1(x,t,y,\tau,r).
\end{equation}
Hence, the same lines of reasoning as above yields: 
\begin{equation}  \label{eq60}
u_1(x,t,y,\tau)=\theta(x,t,y,\tau)\cdot\nabla_x u_0(x,t) +
\omega_1(x,t,y,\tau,u_0(x,t)).
\end{equation}

We are now in a position to formulate the strong form of the macroscopic
variational problem (\ref{eq47}). Substituting in (\ref{eq47}) the
expression of $\nabla_z u_2$ obtained in (\ref{eq53}), we have:

\begin{equation}  \label{eq61}
\left\{ \begin{aligned} & \left(\int_{Y_m}\rho(y)dy\right)
\int_{\Omega_T}\frac{\partial u_0}{\partial t}\psi_0\,dxdt= -
\iiint\limits_{\Omega_T\times Y_m\times \mathcal{T}}
G(y,\tau,u_0)\cdot\nabla \psi_0\,dxdtdyd\tau \\& - \frac{1}{|Z_s|}\ \ \
\iiint\limits_{\Omega_T\times
Y_m\times\mathcal{T}}\tilde{A}(y,\tau)(\nabla_x u_0 + \nabla_y
u_1)\cdot(\nabla_x \psi_0 )\,dxdtdyd\tau \\& - \frac{1}{|Z_s|}\ \ \
\iiint\limits_{\Omega_T\times Y_m\times\mathcal{T}}\left[\partial_r
G(y,\tau,u_0)\cdot\left(\left(\int_{Z_s}(I+\nabla_z
\chi)dz\right)\cdot(\nabla_x u_0 + \nabla_y
u_1)\right)\right]\psi_0\,dxdtdyd\tau \\& \text{ for all } \ \ \psi_0\in
\mathcal{C}^\infty_0(\Omega_T). \end{aligned} \right.
\end{equation}
We put $\tilde{B}(y,\tau)=\int_{Z_s}(I+\nabla_z \chi(y,\tau,z))dz \ \
((y,\tau)\in \mathbb{R}^N\times \mathbb{R})$ and use (\ref{eq60}) to get:

\begin{equation}  \label{eq62}
\left\{ \begin{aligned} & \left(\int_{Y_m}\rho(y)dy\right)
\int_{\Omega_T}\frac{\partial u_0}{\partial t}\psi_0\,dxdt= -
\int\limits_{\Omega_T }\left(\iint_{Y_m\times \mathcal{T}}\partial_r
G(y,\tau,u_0)\cdot\nabla_x u_0 \right)\psi_0\,dxdtdyd\tau \\& -
\frac{1}{|Z_s|}\ \ \  \int_{\Omega_T}\hat{A}\nabla_x u_0 \cdot \nabla_x
\psi_0\,dxdt- \frac{1}{|Z_s|}\ \ \  \int\limits_{\Omega_T
}\left(\iint_{Y_m\times\mathcal{T}}\tilde{A}(y,\tau) \nabla_y
\omega_1\,dyd\tau\right)\cdot\nabla_x \psi_0 \,dxdt \\& - \frac{1}{|Z_s|}\ \
\ \int\limits_{\Omega_T}\left\{\left(\iint_{Y_m\times\mathcal{T}}\partial_r
G(y,\tau,u_0)\cdot\left(\tilde{B}(y,\tau)(I+\nabla_y
\theta)\right)dyd\tau\right)\cdot\nabla_x u_0\right\}\psi_0\,dxdt \\&-
\frac{1}{|Z_s|}\int\limits_{\Omega_T}\left(\iint_{Y_m\times\mathcal{T}}%
\partial_r G(y,\tau,u_0)\cdot\left(\tilde{B}(y,\tau)(I+\nabla_y
\theta)\cdot\nabla_y \omega_1\right)dyd\tau\right)\psi_0\,dxdt \\& \text{
for all } \ \ \psi_0\in \mathcal{C}^\infty_0(\Omega_T), \end{aligned} \right.
\end{equation}
where 
\begin{equation*}
\hat{A}(x,t)=\iint_{Y_m\times\mathcal{T}}\tilde{A}(y,\tau)(I+\nabla_y
\theta(x,t,y,\tau))dyd\tau\quad ((x,t)\in\Omega_T).
\end{equation*}
Setting 
\begin{eqnarray*}
L_1(x,t,r)&=&\iint_{Y_m\times\mathcal{T}}\tilde{A}(y,\tau) \nabla_y
\omega_1(x,t,y,\tau,r)\,dyd\tau,  \notag \\
L_2(x,t,r) & = & \iint_{Y_m\times\mathcal{T}}\left[|Z_s|\partial_r
G(y,\tau,r) + \partial_r G(y,\tau,r)\cdot\left(\tilde{B}(y,\tau)(I+\nabla_y
\theta(x,t,y,\tau))\right)\right] dyd\tau,  \notag \\
L_3(x,t,r)&=&\iint_{Y_m\times\mathcal{T}}\partial_r G(y,\tau,r)\cdot\left(%
\tilde{B}(y,\tau)(I+\nabla_y \theta(x,t,y,\tau))\cdot\nabla_y
\omega_1(x,t,y,\tau,r)\right)dyd\tau,  \notag
\end{eqnarray*}

we are led to the following result.

\begin{theorem}
The function $u_{0}$ determined by \emph{(\ref{eq28})} and solution to the
variational problem \emph{(\ref{eq47})}, is the unique solution to the
following boundary value problem: 
\begin{equation}
\left\{ \begin{aligned}
&|Z_s|\left(\int_{Y_m}\rho(y)\,dy\right)\frac{\partial u_0}{\partial t} =
\text{div}\, \left(\hat{A}(x,t)\nabla u_0\right) + \text{div}\, L_1(x,t,u_0)
- L_2(x,t,u_0)\cdot \nabla u_0 - L_3(x,t,u_0) \text{ in } \Omega_T\\ & u_0
=0 \qquad \text{on }\ \ \partial\Omega\times (0,T)\\ &u_0(x,0)= u^0(x)
\qquad \text{in }\ \ \Omega. \end{aligned}\right.  \label{eq63}
\end{equation}
\end{theorem}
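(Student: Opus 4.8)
The plan is to read the boundary value problem \eqref{eq63} off the variational identity \eqref{eq62} and then establish uniqueness by an energy argument, the only genuinely quasilinear feature being the convection term $L_2(u_0)\cdot\nabla u_0$.

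First I would derive the strong form. Multiplying \eqref{eq62} through by $|Z_s|$ and recalling the definitions of $\hat A$, $L_1$, $L_2$, $L_3$, the two terms carrying the factor $\nabla_x u_0\,\psi_0$ (the first and the fourth on the right of \eqref{eq62}) combine into the single convection term $-\int_{\Omega_T}L_2(u_0)\cdot\nabla_x u_0\,\psi_0$: the piece $|Z_s|\partial_r G$ of $L_2$ comes from the first term and the piece $\partial_r G\cdot\tilde B(I+\nabla_y\theta)$ from the fourth. The two terms $-\int_{\Omega_T}\hat A\nabla_x u_0\cdot\nabla_x\psi_0$ and $-\int_{\Omega_T}L_1(u_0)\cdot\nabla_x\psi_0$ are integrated by parts in the distributional sense to produce $\int_{\Omega_T}\text{div}(\hat A\nabla u_0)\psi_0$ and $\int_{\Omega_T}\text{div}\,L_1(u_0)\psi_0$, while the last term gives $-\int_{\Omega_T}L_3(u_0)\psi_0$. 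Since $\psi_0\in\mathcal{C}^\infty_0(\Omega_T)$ is arbitrary, this yields the equation of \eqref{eq63} in $\mathcal{D}'(\Omega_T)$, and the boundary condition $u_0=0$ on $\partial\Omega\times(0,T)$ is built into the membership $u_0\in L^2(0,T;H^1_0(\Omega))$.

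Next I would justify the time regularity and the initial datum. Since $u_0\in L^2(0,T;H^1_0(\Omega))$, $L_2$ is bounded and $L_1,L_3$ grow at most linearly in $u_0$ (by \eqref{eq8} together with $\omega_1(\cdot,0)=0$, as $g(y,\tau,0)=0$), every term on the right of \eqref{eq63} lies in $L^2(0,T;H^{-1}(\Omega))$; hence $\partial_t u_0\in L^2(0,T;H^{-1}(\Omega))$ and $u_0\in\mathcal{C}([0,T];L^2(\Omega))$, so the trace $u_0(\cdot,0)$ is well defined. To identify it with $u^0$ I would pass to the limit in the weak $\varepsilon$-formulation against a test function not vanishing at $t=0$: the term $\int_{Q^\varepsilon_T}\rho^\varepsilon\partial_t u_\varepsilon\,\psi_\varepsilon$, after integration by parts in time, produces the boundary contribution $-\int_{\Omega^\varepsilon}\rho^\varepsilon u^0\,\psi_\varepsilon(\cdot,0)$, and comparing its limit with the corresponding term coming from \eqref{eq63} forces $u_0(\cdot,0)=u^0$; here the strong convergence \eqref{eq281} and $P_\varepsilon u_\varepsilon(\cdot,0)\to u^0$ in $L^2(\Omega)$ are used.

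The heart of the matter is uniqueness, and the key observation is that, because the cell problems \eqref{eq50} and \eqref{eq58} have data independent of $(x,t)$, both $\hat A$ and $L_2$ are independent of $(x,t)$, while $L_2$ is bounded (by \eqref{eq8} and the $L^\infty$-bound on $\tilde B$). Hence, setting $\Psi_i(r)=\int_0^r L_{2,i}(s)\,ds$, the chain rule gives $L_2(u_0)\cdot\nabla u_0=\text{div}\,\Psi(u_0)$ with $\Psi$ globally Lipschitz, so \eqref{eq63} can be recast with the single Lipschitz flux $F(u_0):=L_1(u_0)-\Psi(u_0)$. Given two solutions $u_0,v_0$ with the same data I would set $w=u_0-v_0$, test the difference of the equations with $w$ (legitimate since $\partial_t w\in L^2(0,T;H^{-1})$ and $w\in L^2(0,T;H^1_0)$), and use: the uniform ellipticity of $\hat A$, which follows from that of $A$ by the usual successive corrector arguments applied to \eqref{eq50} and \eqref{eq58}, to produce a dissipative term $c_0\|\nabla w\|^2_{L^2}$ with $c_0>0$; the bound $|F(u_0)-F(v_0)|\le C|w|$ with Young's inequality to absorb the flux contribution $\int_{\Omega}(F(u_0)-F(v_0))\cdot\nabla w$ into that term; and $|L_3(u_0)-L_3(v_0)|\le C|w|$, which is where \eqref{eq9} enters, making $\partial_r G$ and hence $L_3$ Lipschitz in its argument. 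After absorption one is left with $\tfrac{d}{dt}\|w\|^2_{L^2}\le C\|w\|^2_{L^2}$ and $w(\cdot,0)=0$, whence Gronwall gives $w\equiv0$. The main obstacle is precisely the convection term: handled naively it leaves a term $(L_2(u_0)-L_2(v_0))\cdot\nabla v_0$ that cannot be dominated by $\|w\|_{L^2}$ alone in high dimension, and it is the divergence reformulation above, made available by the boundedness of $L_2$, that removes this difficulty.
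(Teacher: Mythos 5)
Your proposal is correct and follows the same route as the paper: the strong form is read off from (\ref{eq62}) exactly as in the derivation preceding the theorem, and your uniqueness argument --- Lipschitz continuity of the $L_i$, rewriting the convection term $L_2(u_0)\cdot\nabla u_0$ in divergence form via the boundedness and $(x,t)$-independence of $L_2$, then absorption and Gronwall --- is precisely the Allaire--Piatnitski argument that the paper invokes by reference without reproducing. The only difference is one of completeness: the paper's proof is a two-line citation of \cite{AP10}, whereas you supply the details, including the recovery of the initial condition and the identification of where hypothesis (\ref{eq9}) is needed to make $L_3$ Lipschitz.
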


\begin{proof}
The claim that $u_0$ solves the problem (\ref{eq63}) has been proved above
and the uniqueness of the solution to (\ref{eq63}) follows from the fact
that the functions $L_i(x,t,\cdot)$ $(1\leq i \leq N)$ are Lipschitz. This
can be proved by mimicking the reasoning in \cite{AP10}
\end{proof}

We can now formulate the homogenization result for problem (\ref{eq1}).

\begin{theorem}
Assuming that the hypotheses \textbf{A1-A4} are in place and letting $%
u_{\varepsilon }$ ($\varepsilon >0$) be the unique solution to \emph{(\ref%
{eq1})}, we have, as $\varepsilon \rightarrow 0$, 
\begin{equation*}
u_{\varepsilon }\rightarrow u_{0}\quad \text{in }\ \ L^{2}(\Omega _{T}),
\end{equation*}%
where $u_{0}\in L^{2}(0,T;H_{0}^{1}(\Omega ))$ is the unique solution to 
\emph{(\ref{eq63})}.
\end{theorem}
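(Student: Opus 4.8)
The plan is to collect the machinery already assembled in Sections~\ref{S2}--\ref{S4} and then to upgrade the subsequential statement into convergence of the whole family. First I would note that the uniform bounds of Lemma~\ref{l1}, together with Corollary~\ref{c1} and Lemma~\ref{l2}, put $(P_{\varepsilon}u_{\varepsilon})_{\varepsilon>0}$ squarely under the hypotheses of the compactness Theorem~\ref{t1}: indeed $\rho^{\varepsilon}\chi_{\Omega^{\varepsilon}}$ converges weak-$\ast$ in $L^{\infty}(\Omega)$ to a limit that is nonzero almost everywhere (this follows from $\rho\geq\Lambda^{-1}>0$ and $|Z_{s}|>0$, as recorded just before (\ref{eq281})). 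Consequently, from \emph{any} subsequence of $E$ one can extract a further subsequence $E^{\prime}$ along which, by (\ref{eq281}) and Theorem~\ref{t4}, $P_{\varepsilon}u_{\varepsilon}\to u_{0}$ strongly in $L^{2}(\Omega_{T})$ while the multi-scale limits (\ref{eq25})--(\ref{eq26}) hold for a triple $(u_{0},u_{1},u_{2})\in\mathbb{F}_{0}^{1}$.

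Next I would identify the limit. By Theorem~\ref{t6} this triple solves the global variational problem (\ref{eq29}), equivalently the decoupled system (\ref{eq47})--(\ref{eq49}). Solving the microscopic cell problem (\ref{eq50}) yields the representation (\ref{eq53}) of $\nabla_{z}u_{2}$, and solving the mesoscopic problems (\ref{eq57})--(\ref{eq58}) expresses $u_{1}$ through (\ref{eq60}) in terms of $u_{0}$ alone. Substituting these back into the macroscopic equation (\ref{eq47}) produces the convection-diffusion-reaction problem (\ref{eq63}), with homogenized matrix $\hat{A}$ and operators $L_{1},L_{2},L_{3}$. Since each $L_{i}(x,t,\cdot)$ is Lipschitz (established in the preceding theorem by mimicking \cite{AP10}), the parabolic problem (\ref{eq63}) admits a \emph{unique} solution $u_{0}\in L^{2}(0,T;H_{0}^{1}(\Omega))$.

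Finally I would use uniqueness to pass from the subsequence to the whole family. Because the limit $u_{0}$ solving (\ref{eq63}) does not depend on the chosen subsequence $E^{\prime}$, the standard subsequence principle forces the entire family $(P_{\varepsilon}u_{\varepsilon})$ to converge strongly to $u_{0}$ in $L^{2}(\Omega_{T})$; recalling that $P_{\varepsilon}u_{\varepsilon}=u_{\varepsilon}$ on $\Omega^{\varepsilon}$ (see (\ref{eq6})), this is precisely the asserted convergence $u_{\varepsilon}\to u_{0}$ in $L^{2}(\Omega_{T})$. The point I expect to require the most care is not the strong convergence, which Theorem~\ref{t1} already supplies, but rather the recovery of the initial datum $u_{0}(x,0)=u^{0}(x)$: the test functions in (\ref{eq29}) vanish at $t=0$, so the initial condition must be obtained by a separate argument, testing (\ref{eq1}) against functions $\psi_{\varepsilon}$ that are nonzero at $t=0$ and invoking the uniform bound on $\rho^{\varepsilon}\chi_{\Omega^{\varepsilon}}\,\partial_{t}(P_{\varepsilon}u_{\varepsilon})$ from Lemma~\ref{l2} to legitimize the limit of the time-derivative term.
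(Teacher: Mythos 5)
Your proposal is correct and follows essentially the same route as the paper: compactness via Theorem~\ref{t1}, identification of the subsequential limit through the global variational problem (\ref{eq29}) and its decoupling into (\ref{eq47})--(\ref{eq49}), and then the uniqueness of the solution to (\ref{eq63}) to upgrade convergence along subsequences to convergence of the whole family. Your closing observation about recovering the initial datum $u_{0}(x,0)=u^{0}(x)$ is a fair point of care that the paper passes over silently, but it does not change the argument's structure.
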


\end{document}